\documentclass{amsart}
\usepackage[utf8]{inputenc}
\usepackage{amsfonts}
\usepackage{amsmath}
\numberwithin{equation}{section}
\usepackage{amssymb}
\usepackage{amsthm}
\usepackage{mathrsfs}
\usepackage{mathtools}
\usepackage{booktabs,array,tabularx}
\usepackage{placeins,needspace}
\usepackage{graphicx}
\usepackage{xcolor}
\usepackage{tikz}
\usepackage{pgfplots}
\pgfplotsset{compat=1.18}
\usepackage[hidelinks]{hyperref}
\usepackage[capitalise]{cleveref}
\usepackage[top=3cm,bottom=3cm,left=2.5cm,right=2.5cm,headsep=10pt,letterpaper]{geometry} 

\usepackage[
    backend=biber,
    style=alphabetic,
    sorting=nyt,
    maxbibnames=99
]{biblatex}
\theoremstyle{plain}
\newtheorem{theorem}{Theorem}[section]
\newtheorem{proposition}[theorem]{Proposition}
\newtheorem{lemma}[theorem]{Lemma}
\newtheorem{corollary}[theorem]{Corollary}

\theoremstyle{definition}

\newtheorem{example}[theorem]{Example}

\theoremstyle{remark}
\newtheorem{remark}[theorem]{Remark}

\allowdisplaybreaks[2]
\newcommand{\R}{\mathbb R}
\newcommand{\Z}{\mathbb Z}
\newcommand{\T}{\mathbb T}
\newcommand{\E}{\mathbb E}

\newcommand{\Var}{\operatorname{Var}}
\newcommand{\dd}{\,\mathrm d}

\DeclareMathOperator{\diag}{diag}

\title[Fisher completions and regularization]{Fisher Completions and Statistical Regularization of Wrapped Gaussian Families}

\author[G. Cho]{Gunhee Cho}
\address[G. Cho]{Department of Mathematics \\ Texas State University \\ San Marcos, TX 78666, USA}
\email{wvx17@txstate.edu}

\author[H. C. Jang]{Hyun Chul Jang}
\address[H. C. Jang]{Department of Mathematics \\ Texas State University \\ San Marcos, TX 78666, USA}
\email{hcjang@txstate.edu}
\thanks{Corresponding author: Hyun Chul Jang.}

\subjclass[2020]{Primary 53B12; Secondary 62B11, 62H11, 62C20, 53C23, 11H06, 60E05}

\keywords{Fisher information, wrapped normal distribution, directional statistics, metric completion, covariance regularization, local compactness, Assouad dimension, minimax risk}

\hypersetup{pdftitle={Fisher Completions and Statistical Regularization of Wrapped Gaussian Families},pdfauthor={Gunhee Cho and Hyun Chul Jang}}

\begin{document}
\raggedbottom
\begin{abstract}
We characterize the intrinsic Fisher completion of the full-covariance wrapped Gaussian family on a fixed torus. Its points are indexed by primitive frequency subgroups, characters, and positive quadratic forms; the completion topology is total variation. Every added density is positive and analytic, yet in dimension at least two every neighborhood of every added point is noncompact and has infinite packing at a fixed relative scale. This gives local minimax risk of order the neighborhood radius at every sample size. A fixed covariance floor $\Sigma\succeq\varepsilon I$, with no upper bound, restores compactness and uniform estimation: the $L^1$ risk is at most $C_{\varepsilon,d}n^{-1/2}\{\log(n+1)\}^{d/4}$, and model-valued estimators are uniformly consistent in expected Fisher distance. The completed geometry also admits explicit minimizing geodesics through every added law, including pairs of finite-parameter laws whose distance is not attained inside the finite family. For compatible distinct pieces we obtain a sharp right-angle distance asymptotic. Heat contractions and canonical retractions give contractible small balls despite the failure of local compactness.
\end{abstract}
\maketitle
\setcounter{tocdepth}{1}\tableofcontents
\section{Introduction}
When a Gaussian observation is recorded only modulo a fixed period lattice, its covariance can diverge while the observed law remains at finite Fisher distance. What points complete the statistical model, and what does the resulting geometry imply for estimation? We answer these questions for unrestricted covariance on the fixed torus $\T^d=\R^d/(2\pi\Z^d)$. The central distinction is between the full family and the same family with a fixed positive lower bound on covariance. The full completion has infinitely many separated laws in every neighborhood of every added point. A covariance floor restores compactness and uniform estimation without imposing any upper covariance bound.

Let $\mathcal P_d$ be the positive-definite covariance matrices. A point $P=(\mu,\Sigma)\in\mathcal M_d=\T^d\times\mathcal P_d$ is the law of the angles $Y=X\bmod2\pi$, where $X\sim N_d(\mu,\Sigma)$. Its density $F_P$ relative to normalized Haar measure $\nu$ is determined by its Fourier moments
\[
 m_n(F)=\int e^{in\cdot\theta}F(\theta)\dd\nu(\theta)=\E\,e^{in\cdot Y},
 \qquad
 m_n(F_P)=e^{in\cdot\mu-n^\top\Sigma n/2}\qquad(n\in\Z^d).
\]
The angle $n\cdot Y=n_1Y_1+\cdots+n_dY_d$ is well defined modulo $2\pi$ because $n$ is an integer vector, and $|m_n|$ measures how concentrated it is. Let $d_{\mathrm W}$ be the intrinsic Fisher distance of $\mathcal M_d$, $\widehat{\mathcal M}_{d,\mathrm W}$ its metric completion, and call the points of $\widehat{\mathcal M}_{d,\mathrm W}\setminus\mathcal M_d$ added points. Write $U_d$ for the uniform law and $\mathrm{WN}_r(\beta,B)$ for the $r$-dimensional wrapped normal law with mean $\beta$ and covariance $B$. A subgroup $L\le\Z^d$ is primitive if $kn\in L$ implies $n\in L$ for every nonzero integer $k$. We use $\|F-G\|_1=2d_{\mathrm{TV}}(F,G)$. The period lattice $2\pi\Z^d$ is no restriction: observations modulo any fixed full-rank lattice reduce to this case by a linear change of variables, with the dual lattice as frequency lattice (Remark~\ref{rem:periodlattice}).

\begin{theorem}[Intrinsic completion and its local geometry]\label{thm:maincompletion}
Let $d\ge1$. The following hold.
\begin{enumerate}
\item \emph{Classification.} The density map extends to a homeomorphism from $\widehat{\mathcal M}_{d,\mathrm W}$ onto the total-variation closure of $\{F_P:P\in\mathcal M_d\}$. A density $F$ in this closure determines
\[
 L=\{n\in\Z^d:m_n(F)\neq0\},\qquad
 \chi(n)=\frac{m_n(F)}{|m_n(F)|},\qquad
 Q(n)=-2\log|m_n(F)|\quad(n\in L),
\]
where $L$ is a primitive subgroup, $\chi:L\to S^1$ is a character, and $Q$ extends to a positive-definite quadratic form on $\operatorname{span}_{\R}L$. Conversely, every such triple $(L,\chi,Q)$ arises, and it determines $F$ through
\[
 m_n(F)=\begin{cases}\chi(n)e^{-Q(n)/2},&n\in L,\\0,&n\notin L.\end{cases}
\]
With $r=\operatorname{rank}L$, in integer torus coordinates whose last $r$ basis vectors span $L$, the law is $U_{d-r}\otimes\mathrm{WN}_r(\beta,B)$, where $\beta\in\T^r$ gives the values of $\chi$ on that basis and $B$ is the Gram matrix of $Q$. The added points are exactly the laws with $r<d$. All these densities are positive and real analytic.
\item \emph{Global regularity.} The completion and its added set contract to $U_d$, and every point has a neighborhood basis of contractible metric balls. Their Hausdorff dimensions are respectively $d(d+3)/2$ and $(d-1)(d+2)/2$.
\item \emph{Local failure.} If $d\ge2$, every sufficiently small ball $B(E,R)$ about an added point $E$ contains infinitely many points separated by at least $c_E R$, with $c_E>0$ independent of $R$. Hence no added point has a compact neighborhood, and every neighborhood of one has infinite Assouad dimension.
\end{enumerate}
\end{theorem}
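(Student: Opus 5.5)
The proof splits into four stages: two-sided comparison of $d_{\mathrm W}$ with total variation, classification of the TV closure, global regularity, and the local packing construction.

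\emph{Stage 1: comparison of $d_{\mathrm W}$ with total variation.} The aim is to identify the metric completion with the total-variation closure. The lower bound is standard: the Fisher length of a curve dominates twice the Hellinger length. Hence
\[
 d_{\mathrm W}(P,P')\ge 2\sqrt2\,d_H(F_P,F_{P'})\ge c\|F_P-F_{P'}\|_1 .
\]
So Cauchy sequences for $d_{\mathrm W}$ are $L^1$-Cauchy, and the density map extends continuously and $1$-Lipschitz (up to constants) to $\widehat{\mathcal M}_{d,\mathrm W}$.

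The converse is needed for injectivity and for continuity of the inverse. I will show that $d_{\mathrm W}$ is locally controlled by total variation near every closure point, by building explicit short curves. Given two nearby laws with parameters $(\mu,\Sigma)$ and $(\mu',\Sigma')$, write $\Sigma$ in a basis adapted to the dominant frequency subgroup. Directions of large variance then contribute almost nothing to the Fisher metric, because the Fisher information of a wrapped normal in a direction with variance $s$ decays like $e^{-cs}$.

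Concretely, I will compute the Fisher metric through Fourier moments. Using the identity $\int |\partial F|^2/F \dd\nu$ and Poisson summation, the metric splits into a finite-dimensional Gaussian part on the concentrated frequencies plus an exponentially small remainder. Curves that send the remaining eigenvalues to infinity have finite, and in fact small, length. This gives $d_{\mathrm W}\le \omega(\|F-G\|_1)$ locally, which yields the homeomorphism.

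\emph{Stage 2: classification.} Characteristic functions converge pointwise under $L^1$ convergence, so $m_n(F)=\lim e^{in\cdot\mu_k-n^\top\Sigma_k n/2}$.

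\begin{itemize}
\item \emph{$L$ is a subgroup.} Positive semidefiniteness of the limiting form $Q_k(n)=n^\top\Sigma_kn$ on the set where it stays bounded gives the triangle-type inequality $\sqrt{Q(n+m)}\le\sqrt{Q(n)}+\sqrt{Q(m)}$.
\item \emph{$L$ is primitive.} We have $Q(kn)=k^2Q(n)$, so $kn$ stays bounded exactly when $n$ does.
\item \emph{$\chi$ is a character.} It is a limit of the characters $n\mapsto e^{in\cdot\mu_k}$.
\item \emph{$Q$ is positive definite on $\operatorname{span}_\R L$.} It is a limit of quadratic forms. Definiteness, rather than mere semidefiniteness, comes from the fact that an $L^1$ limit of densities cannot have $|m_n|=1$ for $n\ne0$ unless it concentrates on a subtorus, which is incompatible with being a density.
\end{itemize}

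For the converse, choose a $\Z$-basis completing $L$ (possible because $L$ is primitive), take $B$ to be the Gram matrix of $Q$, and add $t\,\mathrm{diag}(1,\dots,1,0,\dots,0)$ with $t\to\infty$ on the complement. This realizes $U_{d-r}\otimes\mathrm{WN}_r(\beta,B)$ as the limit.

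Positivity and analyticity follow from the product form: wrapped normal densities are theta functions, which are positive and analytic. The added points are exactly those with $r<d$, since $r=d$ is $\mathcal M_d$.

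\emph{Stage 3: global regularity.}
\begin{itemize}
\item \emph{Contraction.} Use the heat flow $m_n\mapsto m_n e^{-t|n|^2}$ reparametrized to $t\in[0,\infty]$, with $t=\infty$ giving $U_d$. It preserves the closure and the added set, and it is continuous in total variation, hence continuous for $d_{\mathrm W}$ by Stage 1.
\item \emph{Contractible small balls.} Use a canonical retraction that combines the heat semigroup with the Stage 1 curves.
\item \emph{Hausdorff dimension.} The completion is a countable union of strata indexed by $L$, each Lipschitz-equivalent locally to a manifold of dimension $r+r(r+1)/2$. The top stratum gives $d+d(d+1)/2=d(d+3)/2$. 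The added set has top strata $r=d-1$, giving $(d-1)(d+2)/2$. Countable stability of Hausdorff dimension then gives both values.
\end{itemize}

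\emph{Stage 4: local failure, the main obstacle.} Fix an added point $E=(L,\chi,Q)$ with $r<d$, and a frequency direction $v\notin\operatorname{span}L$. For each primitive $n_j\in\Z^d$ outside $\operatorname{span}L$ with $|n_j|\to\infty$, I will perturb $E$ by adding to the law a small Fourier mode. Concretely, take a rank-$(r+1)$ law with $L_j=L\oplus\Z n_j$ (saturated), with $Q_j(n_j)$ chosen so that $e^{-Q_j(n_j)/2}=\eta R$.

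The TV and Fisher distance to $E$ is then $\asymp R$. Two such perturbations for $j\ne k$ differ in disjoint Fourier modes, which gives $L^1$ separation of order $R$. The key estimate is the upper bound $d_{\mathrm W}(E,E_j)\le CR$ uniformly in $j$; I would obtain it from the Stage 1 curve estimate, checking that its constant does not depend on $|n_j|$, which is the delicate point. The lower bound on mutual separation transfers via the Hellinger comparison. Infinitely many $c_ER$-separated points in $B(E,R)$ rule out compactness and force infinite Assouad dimension.
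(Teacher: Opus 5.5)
Your overall plan matches the paper's: compare $d_{\mathrm W}$ with total variation in both directions, classify the TV closure, use heat-type contractions, and build a shear-type packing. The lower comparison is fine, and so is the global contraction via TV continuity plus the homeomorphism. The gap is the central upper estimate.

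\emph{The upper estimate.} You propose to control $d_{\mathrm W}$ through ``directions of large variance'' and curves sending ``the remaining eigenvalues to infinity.'' Euclidean eigenvalues are the wrong invariant here. The sequence $\Sigma_j=j\left(\begin{smallmatrix}1+j^2&-j\\-j&1\end{smallmatrix}\right)$ converges to $U_2$ although $\lambda_{\min}(\Sigma_j)\to0$. Conversely, the shears $a\left(\begin{smallmatrix}1+k^2&-k\\-k&1\end{smallmatrix}\right)$ have an eigenvalue of order $ak^2$ yet stay at a fixed positive distance from $U_2$ and from one another. Work in adapted coordinates with Schur complement $S=A-CB^{-1}C^\top$ and $D=B^{-1}C^\top$. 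What is needed there is the following.
\begin{enumerate}
\item An upper bound $d_{\mathrm W}(P,E(\beta,B))\le C_{B,m}e^{-\ell(S)/2}$ that is uniform in $D$ and in the mean. It comes from the lattice theta bound and the shifted sums $\sup_x\sum_l e^{-(l+x)^\top B(l+x)/2}$.
\item Connector curves showing that all approaches with the same limit law have the same metric endpoint.
\item The converse: $\|F_{P_j}-F\|_1=\delta_j$ forces $\ell(S_j)\ge-2\log\delta_j-\frac r4\|B_j\|_{\mathrm{op}}$. This follows by bounding \emph{all} vanishing moments at once and rounding $-D_jk$ to an integer.
\end{enumerate}
Without these, injectivity and inverse continuity of the extended density map are unproved.

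\emph{Stage~4.} The same omission leaves your ``delicate point'' open. In fixed coordinates the forms defining your $E_j$ have least eigenvalue tending to zero, so any variance-dependent constant blows up. The clean fix is that integer torus automorphisms fixing $L$ pointwise on the frequency side fix $E$ and act isometrically on the completion. If $Q_j$ restricts to $Q$ on $L$ with no cross term in the basis $(n_j,\text{basis of }L)$, these automorphisms carry $E_1$ to $E_j$, so $d_{\mathrm W}(E_j,E)$ is independent of $j$. Your separation step also fails when $r=d-1$. Then $\Z^d/L\cong\Z$, every saturated $L\oplus\Z n_j$ equals $\Z^d$, and all $E_j$ are finite laws with every moment nonzero, so there are no disjoint Fourier modes. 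Separation must come from the Gaussian factor on $L$. For $n_j=e_{x_1}+je_{y_1}$ one gets $|m_{n_k}(E_j)|=\eta R\,e^{-(k-j)^2B_{11}/2}$, hence pairwise distance at least $\eta R(1-e^{-B_{11}/2})$, which is the paper's bound.

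\emph{Stage~2.} Excluding $|m_n|=1$ only rules out kernel vectors of $Q$ that lie in $L$. A limiting form can a priori be semidefinite with an irrational kernel, e.g.\ $Q(n)=(n_1-\sqrt2\,n_2)^2$ on $L=\Z^2$, where $|m_n|<1$ for all $n\ne0$. This case needs its own argument. The would-be law is a wrapped degenerate Gaussian carried by the image of a lower-dimensional affine subspace, a Haar-null set, so it is not a density; this is the paper's route. Alternatively, Minkowski's theorem gives nonzero $n\in L$ with $Q(n)\to0$, contradicting Riemann--Lebesgue.

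\emph{Stage~3.} ``A canonical retraction combining the heat semigroup with the Stage~1 curves'' does not show that small \emph{metric} balls are contractible. TV continuity does not keep a homotopy inside a $d_{\mathrm W}$-ball, and the isotropic heat flow moves every $E\ne U_d$. The paper instead uses the partial heat flow that adds covariance only in directions transverse to $L$. This flow fixes $\overline{\mathcal S_L}\ni E$. It is $d_{\mathrm W}$-nonexpansive because Markov convolution does not increase $\int h^2/F\,\dd\nu$ (conditional Cauchy--Schwarz). So it retracts $B(E,\varepsilon)$ within itself onto $B(E,\varepsilon)\cap\overline{\mathcal S_L}$. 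That set is a small convex Riemannian ball because $\overline{\mathcal S_L}$ is isometric to $\widehat{\mathcal M}_{r,\mathrm W}$: marginalization is $1$-Lipschitz, and adjoining an independent uniformizing factor preserves Fisher speed. The same isometry is what justifies your claim, in the Hausdorff-dimension count, that the strata are locally Lipschitz-equivalent to manifolds of dimension $r(r+3)/2$.
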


Concretely, a finite-parameter sequence $(\mu_j,\Sigma_j)$ converges to the point $(L,\chi,Q)$ exactly when
\[
 n^\top\Sigma_jn\to Q(n)\ \text{and}\ e^{in\cdot\mu_j}\to\chi(n)\quad(n\in L),
 \qquad
 n^\top\Sigma_jn\to\infty\quad(n\notin L).
\]
Thus $L$ consists of the angles $n\cdot Y$ that remain nonuniform in the limit, and $\chi(n)$ and $Q(n)$ are the limiting mean direction and variance of that angle: under the limit law $n\cdot Y\sim\mathrm{WN}_1(\arg\chi(n),Q(n))$. Example~\ref{ex:d2} and Figure~\ref{fig:latticeexample} work this out for $d=2$, and Theorem~\ref{thm:completionclassification} gives the equivalent criterion through the lattice minimum of a Schur complement.

The completion statement is stronger than a Fourier description of limiting laws: each limit must be reached by finite-length paths inside the wrapped family, and different approaches to the same law must define the same metric endpoint. The main technical estimate controls this intrinsic approach uniformly over the cross-covariance blocks. In adapted coordinates, the lattice minimum of the Schur complement gives a necessary and sufficient convergence criterion (Theorem~\ref{thm:completionclassification}). Along each fixed positive-definite covariance ray, the distance has an exact leading coefficient determined by the shortest lattice shell (Proposition~\ref{prop:sharpendpoint}). Ambient Fisher--Rao or Hellinger distances alone do not give these intrinsic conclusions, since their paths may leave the model.

The geometric obstruction has a statistical counterpart, and a simple restriction removes both. In the next statement $Y_1,\dots,Y_n$ are independent observations from $P$. An estimator is a measurable map $\widehat F_n$ from $(\T^d)^n$ to the probability densities on $\T^d$, taken modulo $\nu$-null sets with the $L^1$ metric; minimax risks range over all such estimators, including those whose values lie outside the model. Balls $B(E,R)$ are taken in the completed distance $d_{\mathrm W}$, and distances on restricted families are inherited from the full completion. For $\varepsilon>0$ put
\[
 \mathcal M_d(\varepsilon)=\{(\mu,\Sigma)\in\mathcal M_d:\Sigma\succeq\varepsilon I_d\},
\]
a lower bound on every covariance eigenvalue in the fixed observation coordinates, with no upper bound.

\begin{theorem}[Unrestricted obstruction and spectral regularization]\label{thm:mainstat}
Let $d\ge1$ and $\varepsilon>0$. The following hold.
\begin{enumerate}
\item \emph{Obstruction near added points.} Suppose $d\ge2$ and let $E$ be an added point. There are constants $R_E,\lambda_E>0$ such that for all $0<R<R_E$ and all sample sizes $n\ge1$,
\[
 \lambda_E R\le
 \inf_{\widehat F_n}\sup_{P\in B(E,R)\cap\mathcal M_d}
 \E_{P^{\otimes n}}\|\widehat F_n-F_P\|_1\le R.
\]
The same bounds hold for completed-Fisher loss and estimators with values in $\widehat{\mathcal M}_{d,\mathrm W}$. In particular, no estimator is uniformly consistent on any fixed neighborhood of an added point.
\item \emph{Exact compactness criterion.} A set $\mathcal F\subset\mathcal M_d$ has compact closure in $d_{\mathrm W}$ if and only if its Fourier moments decay uniformly:
\[
 \sup_{P\in\mathcal F}|m_n(F_P)|\longrightarrow0
 \quad\text{equivalently}\quad
 \inf_{(\mu,\Sigma)\in\mathcal F}n^\top\Sigma n\longrightarrow\infty
 \qquad(|n|\to\infty).
\]
Every such $\mathcal F$ admits estimators with values in $\mathcal F$ that are uniformly consistent on $\mathcal F$, both in $L^1$ and in completed-Fisher distance. A spectral floor is a sufficient condition: $\mathcal M_d(\varepsilon)$ satisfies the criterion, and its closure consists exactly of the points $(L,\chi,Q)$ with $Q(v)\ge\varepsilon|v|^2$ on $\operatorname{span}_{\R}L$. It is not necessary: the sequence of Example~\ref{ex:d2} with $\lambda_{\min}(\Sigma_j)\to0$ converges to $U_2$.
\item \emph{Uniform estimation under a spectral floor.} There are constants $c_\varepsilon,C_{\varepsilon,d}>0$ with
\[
 \frac{c_\varepsilon}{\sqrt n}\le
 \inf_{\widehat F_n}\sup_{P\in\mathcal M_d(\varepsilon)}
 \E_{P^{\otimes n}}\|\widehat F_n-F_P\|_1
 \le C_{\varepsilon,d}\frac{\{\log(n+1)\}^{d/4}}{\sqrt n}.
\]
There are estimators taking values in $\mathcal M_d(\varepsilon)$ that attain the upper order and are uniformly consistent in expected completed-Fisher distance.
\end{enumerate}
\end{theorem}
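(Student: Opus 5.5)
We prove the three parts in order, taking as given the classification and local-failure statements of Theorem~\ref{thm:maincompletion}. We also use that the density map is a homeomorphism onto the total-variation closure, and that $d_{\mathrm W}$ dominates a multiple of $L^1$ locally; the latter comes from Hellinger being bounded by the Fisher length of model paths.

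\emph{Part (1).} For the upper bound, take the constant estimator $\widehat F_n\equiv F_E$. The inequality $\|F_P-F_E\|_1\le d_{\mathrm W}(P,E)$ holds after normalizing the Fisher metric so that $2H\le d_{\mathrm W}$ and $\|\cdot\|_1\le 2H$. This gives risk at most $R$, in both losses.

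For the lower bound we do \emph{not} use Fano, since the packing alone is sample-size free and we need an $n$-independent constant. Instead we use the structure behind part (3) of Theorem~\ref{thm:maincompletion}. Near $E=(L,\chi,Q)$ with $r<d$, the $c_ER$-separated points are obtained by choosing frequencies $n_k\notin L$ with $|n_k|\to\infty$. We perturb $E$ so that $m_{n_k}$ acquires modulus of order $R$ while the other moments are nearly unchanged. Concretely, we use finite-parameter laws $P_k$ whose covariance makes the angle $n_k\cdot Y$ moderately concentrated.

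The key point is that $P_k$ and $E$ differ by a fixed $L^1$ amount of order $R$. However, the perturbation is a high-frequency oscillation $\cos(n_k\cdot\theta-\cdot)$ that an $n$-sample cannot locate among infinitely many $k$. To make this precise, consider the uniform mixture $\bar P^{(K)}=K^{-1}\sum_{k\le K}P_k^{\otimes n}$. By orthogonality of distinct characters, the $\chi^2$-divergence of $\bar P^{(K)}$ from $E^{\otimes n}$ tends to $0$ as $K\to\infty$ for fixed $n$, since cross terms vanish when the $n_k$ are spread apart. Le Cam's two-point (mixture) bound then gives risk $\ge \tfrac12\min_k\|F_{P_k}-F_E\|_1\cdot(1-o(1))\ge\lambda_ER$.

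The Fisher-loss version follows from the $L^1$ one. Any completion-valued estimator maps through the density homeomorphism, and $\|\cdot\|_1\le d_{\mathrm W}$. Constructing the $P_k$ inside $B(E,R)\cap\mathcal M_d$ with controlled $d_{\mathrm W}$ is the step I expect to be hardest. For it I would use the explicit minimizing geodesics through added points and the Schur-complement criterion (Theorem~\ref{thm:completionclassification}) to bound the intrinsic length of the approach path uniformly in $k$.

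\emph{Part (2).} Compactness is equivalent to total boundedness, by completeness and the homeomorphism onto the TV closure. Uniform decay of $|m_n|$ gives equicontinuity/uniform Fourier tails, hence TV-precompactness of densities, by a Kolmogorov--Riesz type argument using the Gaussian decay $e^{-n^\top\Sigma n/2}$ and positivity. Conversely, if the decay fails along $n_k$, the corresponding laws have a limit $(L,\chi,Q)$ whose $L$ contains infinitely many $n_k$ directions at bounded $Q$. This contradicts positive-definiteness of $Q$ on a finite-rank lattice, or yields an infinite separated set.

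For estimation on compact $\mathcal F$, take a finite $\delta$-net and a Scheffé/minimum-distance tournament. This is consistent in $L^1$ and, by compactness and the homeomorphism, in $d_{\mathrm W}$. For the spectral floor, $n^\top\Sigma n\ge\varepsilon|n|^2$, and the closure description follows by passing the inequality to the limits $Q(n)=\lim n^\top\Sigma_jn$ on $L$ and extending by homogeneity and density to $\operatorname{span}_{\R}L$. Example~\ref{ex:d2} gives non-necessity.

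\emph{Part (3).} For the lower bound, use a two-point Le Cam argument on the mean parameter $\mu$ with fixed $\Sigma=\varepsilon I$, which has Fisher information of order $1$. For the upper bound, estimate the moments empirically, $\widehat m_n=n^{-1}\sum_j e^{in\cdot Y_j}$, and truncate at $|n|\le T$ with $T^2\asymp\varepsilon^{-1}\log(n+1)$. The bias is then at most $\sum_{|n|>T}e^{-\varepsilon|n|^2/2}\lesssim n^{-1/2}$. The variance part is bounded via $L^1\le L^2$ on the torus and Parseval, giving $\sqrt{\#\{|n|\le T\}/n}\asymp(\log n)^{d/4}n^{-1/2}$. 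Finally, project onto $\mathcal M_d(\varepsilon)$ by a near-minimizer of $L^1$ distance, which at most triples the error. Fisher consistency then follows from part (2) via compactness of the closure and uniform continuity of the inverse density map.
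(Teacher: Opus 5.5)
\textbf{Part~(1), lower bound: a genuine gap.} Your mixture argument needs $1+\chi^2(\bar P^{(K)}\|E^{\otimes n})=K^{-2}\sum_{j,k}\bigl(\int F_{P_j}F_{P_k}F_E^{-1}\dd\nu\bigr)^n$ to tend to $1$. So the cross terms must be close to $1$ on average, and orthogonality of characters does not give this. A finite-parameter law has every Fourier moment nonzero, so $P_j$ and $P_k$ share Fourier mass away from $n_j,n_k$, and the $n$th power amplifies it.

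For the natural hypotheses, the integer-shear points of Theorem~\ref{thm:noncompact}, your claim is false whenever $E$ has residual rank $r\le d-2$, in particular at $E=U_d$. Every $P_k$ has the same $\mathrm{WN}_1(0,a)$ marginal in a coordinate in which $E$ is uniform. By data processing, $\chi^2(\bar P^{(K)}\|E^{\otimes n})\ge\mathcal T(q)^n-1$ for every $K$, where $\mathcal T(q)=\sum_{j\in\Z}q^{2j^2}>1$. Once $nq^2\gg1$ the mixture is easily distinguished from $E^{\otimes n}$.

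A mixture proof would need several things you do not supply:
\begin{itemize}
\item A different family in which the shared directions become nearly uniform along the sequence. For $E=U_2$, for example, $\Sigma_k=T_k^{-1}\diag(a,A_k)T_k^{-\top}$ with $T_k=I_2+ke_1e_2^\top$ and $A_k\to\infty$.
\item A finite-parameter substitute for the point hypothesis, since $E\notin\mathcal M_d$, chosen with $n\,d_{\mathrm{TV}}$ small.
\item A proof that $d_{\mathrm W}(P_k,E)\lesssim R$ uniformly in $k$.
\end{itemize}
You leave that last step open, and it does not come from geodesics through $E$. It comes from integer shears being isometries fixing $E$ (Lemma~\ref{lem:torusisometries}). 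At positive residual rank it also uses the bound of Proposition~\ref{prop:endpointdistance}, which is uniform in the cross-covariance $D$.

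\textbf{How the paper avoids cross terms.} The $2\lambda_ER$ separation makes the success events $A_k=\{\|\widehat F_n-F_{P_k}\|_1<\lambda_ER\}$ pairwise disjoint. Shear invariance gives the exact, $k$-independent identity $\E_{E^{\otimes n}}L_k^2=\mathcal T(q)^{mn}$ with $m=d-r$. Two applications of Cauchy--Schwarz then give $J^{-1}\sum_{k<J}P_k^{\otimes n}(A_k)\le\mathcal T(q)^{mn/2}J^{-1/2}\to0$ at fixed $R$ and $n$. This works for the fixed shear family with no modification.

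\textbf{Part~(2): a missing inclusion and a missing tail bound.} You prove only that the closure of $\mathcal M_d(\varepsilon)$ is contained in $\{Q\ge\varepsilon|\cdot|^2\}$. The reverse inclusion needs an explicit approximating family. The paper uses $\Sigma_t=\varepsilon I_d+B+t\Pi_{V^\perp}$ with a mean realizing $\chi$, where $B$ extends $Q-\varepsilon|\cdot|^2$ by zero on $V^\perp$. Rationality of $V=\operatorname{span}_{\R}L$ bounds $|\Pi_{V^\perp}k|$ below for integer $k\notin L$.

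Your Fourier-tail route to sufficiency is a legitimate alternative to the paper's sequential argument through Theorem~\ref{thm:completionclassification}. However, termwise decay of $\sup_P|m_n|$ does not by itself bound $\sup_P\sum_{|n|>M}|m_n(P)|^2$. You must add that $\ell(\Sigma)\ge\min_{n\ne0}\inf_{\mathcal F}n^\top\Sigma n>0$ on $\mathcal F$, and use shell counting as in Lemma~\ref{lem:latticetheta}.

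\textbf{What matches the paper.} Part~(3) and the compact-family estimation claims follow the paper's arguments.
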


Part~(1) is Corollary~\ref{cor:nouniformestimation}. Part~(2) is Proposition~\ref{prop:compactcriterion} and Corollary~\ref{cor:compactconsistency}, together with Theorem~\ref{thm:spectralfloor}(1) for the spectral-floor closure. Part~(3) is Theorem~\ref{thm:spectralfloor}(2) and Proposition~\ref{prop:spectralfloorlower}, with $c_\varepsilon=3e^{-\varepsilon/2}\min\{1,\sqrt\varepsilon\}/(8\pi)$. The logarithmic gap in part~(3) is not claimed to be optimal. Two remarks clarify the scope of part~(1). First, a fixed Fisher ball about an added point is large in parameter terms: by Theorem~\ref{thm:completionclassification} it contains parameters with arbitrary means and arbitrarily large condition numbers (Section~\ref{sec:statistics}). Second, the obstruction concerns each fixed neighborhood; an estimator that always outputs a known center $E$ already controls the error on balls whose radii shrink to zero. Part~(2) explains the contrast exactly. The separated sequences of Theorem~\ref{thm:maincompletion}(3) keep a moment of fixed size at frequencies $n$ with $|n|\to\infty$, which is precisely what uniform decay forbids; a spectral floor forbids it through the envelope $|m_n|\le e^{-\varepsilon|n|^2/2}$, but so do other families whose smallest eigenvalue tends to zero.

The mechanism can be seen in two dimensions. Fix $a>0$, put $q=e^{-a/2}$, and consider
\[
 \Sigma_k=a\begin{pmatrix}1+k^2&-k\\-k&1\end{pmatrix},\qquad k\ge0.
\]
Integer shears preserve the wrapped Fisher metric and uniformity. All these laws therefore lie at the same small distance from $U_2$, while the frequency $(1,k)$ separates the $k$th law from every other one by at least $q(1-q)$ (Figure~\ref{fig:shear-sequence}). Their likelihood ratios against uniformity have a common finite second moment. This yields a direct estimation lower bound by two applications of Cauchy--Schwarz. Here $\lambda_{\min}(\Sigma_k)\sim a/k^2$: Fourier mass of fixed size moves to arbitrarily high frequencies. In contrast, a fixed floor $\Sigma\succeq\varepsilon I_d$ imposes the common envelope $|m_n|\le e^{-\varepsilon|n|^2/2}$, making the closure compact and Fourier estimation uniform. Divergence of the condition number alone is insufficient to cause the obstruction: $\operatorname{diag}(\varepsilon,t)$ has unbounded condition number but lies in the regularized class.

\begin{figure}[htbp]
\centering
\includegraphics[width=\textwidth]{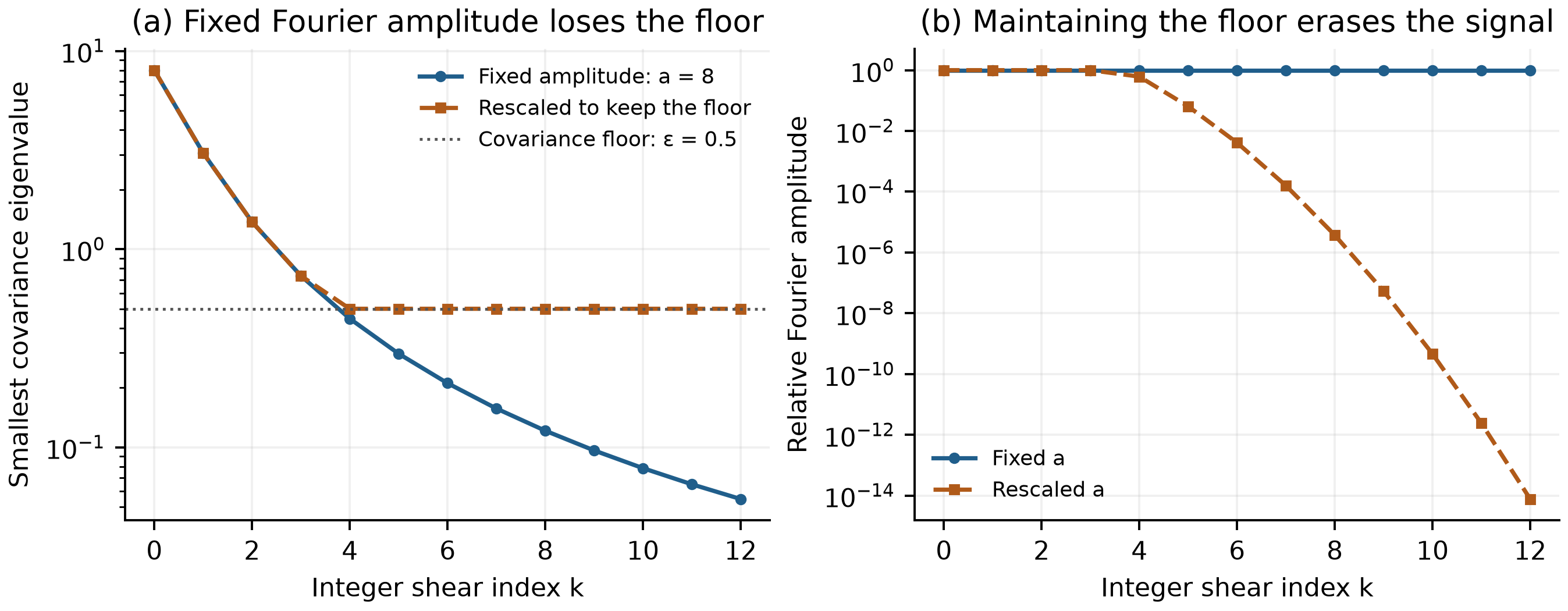}
\caption{The covariance floor and the shear mechanism. For $B_k=\left(\begin{smallmatrix}1+k^2&-k\\-k&1\end{smallmatrix}\right)$, compare $\Sigma_k=8B_k$ with $a_kB_k$, where $a_k=\max\{8,0.5/\lambda_{\min}(B_k)\}$. The matched frequency $(1,k)$ has moment magnitude $e^{-a_k/2}$. Panel (b) divides this magnitude by its initial value $e^{-4}$. Keeping the smallest covariance eigenvalue above $0.5$ forces this Fourier amplitude to vanish. The plotted values are computed from these matrices.}\label{fig:intro-covariance}
\end{figure}

We also determine some metric behavior at the added set. Every added law lies on an isometrically embedded small circular disk. Opposite radial points are joined by a minimizing geodesic through that law; when its residual rank is $d-1$, the two endpoints are finite-parameter laws and their distance is not attained by any path wholly in the finite family (Theorem~\ref{thm:addedpointgeodesics}). In a different configuration, two independent transverse factors admit a shorter product path, so their common uniform factor cannot be a minimizing transit point. Corollary~\ref{cor:crosspiecebridge} gives a sharp right-angle distance asymptotic for these compatible pieces. These results resolve specific instances of the added-point distance and geodesic questions without asserting existence for every pair in the nonproper completion.

\subsection*{Context and contribution}
Wrapped Gaussian models and their Fourier representations are standard in directional statistics \cite{MardiaJupp1999,JammalamadakaSenGupta2001,LeyVerdebout2017}; multivariate inference is developed in \cite{Coles1998,JonaLasinio2012,Nodehi2021,Saraceno2021}. Gaussian Fisher geometry supplies a complete comparison model \cite{AmariNagaoka2000,CostaSantosStrapasson2015,GlobkeQuiroga2021}. Information loss under wrapping is an instance of the classical missing-information principle \cite{OrchardWoodbury1972,Louis1982}. Work on spaces of densities \cite{BruverisMichor2019,Bauer2024} and closures of exponential families \cite{CsiszarMatus2005} provides related settings, but does not determine the intrinsic closure of this wrapped subfamily.

It is useful to separate what follows from the topology alone from what depends on the Fisher distance. Since the completion is homeomorphic to the total-variation closure, the description of the limit laws as a set, the compactness criterion of Proposition~\ref{prop:compactcriterion}, and the failure of local compactness as a topological property could be phrased for total variation. The Fisher geometry enters in the following places: every limit law is reached by paths of finite Fisher length inside the model, and different approaches to it define one metric endpoint; the distance to a limit has the exact leading coefficient $\sqrt{N(S)}$; the separation at a fixed relative scale, and hence the infinite Assouad dimension and the Hausdorff dimensions, refer to $d_{\mathrm W}$; the minimizing geodesics through added points and their nonattainment inside $\mathcal M_d$ are statements about $d_{\mathrm W}$; and the minimax obstruction is stated on balls of the completed Fisher distance.

The tools used here---Fourier series, lattice Gaussian sums, conditional scores, data processing, and elementary testing and projection estimates---are classical. The contribution is the explicit intrinsic classification, uniform control of approaches with unbounded cross-covariance, and the resulting link between local geometry and estimability. The positive regularization theorem uses the standard orthogonal-series bias--variance argument \cite{Tsybakov2009}; its role is to identify precisely what a covariance floor changes in this model. The packing argument also explains why finite Hausdorff dimension need not imply local doubling or manageable statistical complexity \cite{Heinonen2001,Fraser2020,LeCam1973,Yu1997}.

In one observation dimension the family is the classical circular wrapped normal family. Theorem~\ref{thm:completionclassification} with $d=1$ shows that its completion adds only the uniform law, and Lemma~\ref{lem:localcirculardisk} shows that the Fisher metric extends real analytically and positively across that point. These two facts are all that is used about the circular family; both are proved here.

Section~\ref{sec:fullcov} defines the model and the Gaussian comparison. Section~\ref{sec:classification} proves the classification and lattice-distance estimates. Section~\ref{sec:contractions} develops heat contractions, canonical retractions, and the metric results across pieces. Section~\ref{sec:topology} proves the local packing obstruction, and Section~\ref{sec:statistics} establishes its statistical consequence and the spectral-floor recovery theorem.
\section{General covariance and the Gaussian comparison}\label{sec:fullcov}
Fix an integer $d\ge1$ and the torus $\T^d=\R^d/(2\pi\Z^d)$ with its fixed coordinates. Let $\mathcal P_d$ denote the positive-definite symmetric $d\times d$ matrices and put
\[
 \mathcal M_d=\T^d\times\mathcal P_d.
\]
A point $([\mu],\Sigma)$ parametrizes both an orbit of real normal distributions and the law of $Y=X\bmod2\pi$, where $X\sim N_d(\mu,\Sigma)$. All covariance entries are free to vary. Denote the quotient metric by $g_{\mathrm Q}$, the wrapped Fisher metric by $g_{\mathrm W}$, and their intrinsic distances by $d_{\mathrm Q},d_{\mathrm W}$. Let $\widehat{\mathcal M}_{d,\mathrm W}$ be the metric completion of the latter. We identify means modulo the period lattice only; integer linear changes of torus coordinates act on these spaces but are not additional identifications.

\subsection{Finite parameters and strict information loss}
Write $\dd\nu=(2\pi)^{-d}\dd\theta$ for normalized Haar measure. The wrapped density relative to $\nu$ and its moments are
\begin{align}
 F_{\mu,\Sigma}(\theta)
 &=\sum_{n\in\Z^d}e^{-n^\top\Sigma n/2}e^{in^\top(\theta-\mu)},\label{eq:fullfourier}\\
 m_n(\mu,\Sigma)&=\E[e^{in^\top Y}]
 =e^{in^\top\mu-n^\top\Sigma n/2}.\label{eq:fullmoments}
\end{align}
The characteristic function of the real normal gives the moment formula, and Fourier inversion gives the density. Equivalently it is $(2\pi)^d$ times the sum of the real Gaussian density over all integer lifts; this proves positivity and normalization. The multivariate wrapping construction is standard; see, for example, \cite{JonaLasinio2012,Nodehi2021}.

For later product constructions, write the normalized one-dimensional density as
\[
 h_q(u)=\sum_{j\in\Z}q^{j^2}e^{iju}
       =1+2\sum_{j\ge1}q^{j^2}\cos(ju),\qquad 0\le q<1,
\]
with $h_0=1$. This is \eqref{eq:fullfourier} with $d=1$, mean zero, and variance $-2\log q$ when $q>0$.

\begin{proposition}[Regularity and information comparison]\label{prop:fullregular}
The wrapped parametrization is identifiable on $\mathcal M_d$ and $g_{\mathrm W}$ is positive definite and real analytic. For a tangent $V=(v,H)$, with $H$ symmetric,
\begin{equation}\label{eq:fullgaussian}
 g_{\mathrm Q}(V,V)
 =v^\top\Sigma^{-1}v
   +\frac12\operatorname{tr}(\Sigma^{-1}H\Sigma^{-1}H).
\end{equation}
For every $V\ne0$,
\begin{equation}\label{eq:fullstrict}
 0<g_{\mathrm W}(V,V)<g_{\mathrm Q}(V,V).
\end{equation}
The mean--covariance mixed block of the wrapped Fisher matrix vanishes, although neither block generally separates into independent coordinates.
\end{proposition}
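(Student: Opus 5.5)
Identifiability comes first, read off the moments \eqref{eq:fullmoments}. If $F_{\mu,\Sigma}=F_{\mu',\Sigma'}$, then $|m_n|$ agree for all $n$, so $n^\top\Sigma n=n^\top\Sigma'n$ on $\Z^d$. Taking $n=e_i$ and $n=e_i+e_j$ recovers every entry, hence $\Sigma=\Sigma'$. The phases then give $e^{in\cdot\mu}=e^{in\cdot\mu'}$ for $n=e_i$, so $\mu\equiv\mu'$ modulo $2\pi\Z^d$. Real analyticity of $g_{\mathrm W}$ follows from the series \eqref{eq:fullfourier}. Its derivatives in $(\mu,\Sigma)$ are dominated by $\sum|n|^k e^{-\lambda_{\min}|n|^2/2}$ locally uniformly, and the density is bounded below by a positive constant on compacts, since it is a sum of positive Gaussian lifts. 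Hence $g_{\mathrm W}(V,V)=\int(\partial_V\log F)^2F\,d\nu$ is an analytic function of the parameters. Formula \eqref{eq:fullgaussian} is the standard Fisher metric of $N_d(\mu,\Sigma)$. It descends to the quotient because translation of $\mu$ by $2\pi\Z^d$ is an isometry.

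For the comparison \eqref{eq:fullstrict} I use the missing-information principle. Write $X=Y+2\pi K$, where $K\in\Z^d$ is the lattice lift. The score of $Y$ is the conditional expectation of the full score: $\partial_V\log F(Y)=\E[\partial_V\log\phi(X)\mid Y]$. Hence
\[
g_{\mathrm Q}(V,V)-g_{\mathrm W}(V,V)=\E\,\Var\bigl(\partial_V\log\phi(X)\mid Y\bigr)\ge0.
\]
Strict inequality needs the conditional variance to be nonzero. The full score $s(x)=v^\top\Sigma^{-1}(x-\mu)+\tfrac12[(x-\mu)^\top\Sigma^{-1}H\Sigma^{-1}(x-\mu)-\operatorname{tr}\Sigma^{-1}H]$ is a nonconstant polynomial in $x$ when $V\ne0$. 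Given $Y=y$, the lift ranges over all of $y+2\pi\Z^d$ with positive probabilities. If the conditional variance vanished for a.e.\ $y$, then $s$ would be constant on every coset $y+2\pi\Z^d$. A polynomial that is periodic in each direction must be constant, so $s$ would be constant, contradicting $V\ne0$.

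Positivity $g_{\mathrm W}(V,V)>0$ says the score of $Y$ is not identically zero, i.e.\ the map $t\mapsto F_{P+tV}$ has nonzero derivative. I check this on moments: $\partial_V m_n=(in\cdot v-\tfrac12 n^\top Hn)m_n$. Since $m_n\ne0$, this derivative vanishes for all $n$ only if $n\cdot v=0$ and $n^\top Hn=0$ for all $n\in\Z^d$. That forces $v=0$ and $H=0$ by the same polarization as in identifiability. The score has moments $\int \partial_V\log F\,e^{in\theta}F\,d\nu=\partial_Vm_n$, so a nonzero moment derivative gives a nonzero score and hence positive information.

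Vanishing of the mixed block uses the reflection symmetry $\theta-\mu\mapsto-(\theta-\mu)$, which preserves $F_{\mu,\Sigma}$ because the density is even about $\mu$. The $\mu$-score is $-\nabla_\theta\log F$, which is odd under this reflection. The $\Sigma$-score $\partial_H\log F$ is even, since $\partial_HF$ is a series in $\cos(n\cdot(\theta-\mu))$. Their product is odd, so it integrates to zero. For the remark that neither block separates, it suffices to exhibit a nondiagonal $\Sigma$ where some off-diagonal entry of the mean block is nonzero, for example by expanding to first order in a small correlation. I expect the main obstacle to be making the strictness argument rigorous: one must justify the conditional-expectation identity for the score, using dominated convergence with Gaussian tails, and turn the claim that constancy on cosets for a.e.\ $y$ forces a constant polynomial into a clean measure-theoretic statement.
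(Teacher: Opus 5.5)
Your proposal is correct and follows the paper's proof essentially step by step. It uses polarization of the moments at $e_i$ and $e_i+e_j$ for identifiability and for $g_{\mathrm W}>0$, the conditional-score identity with strictly positive lift weights (so that a nonconstant quadratic score cannot be constant on a coset $y+2\pi\Z^d$) for strictness, and evenness of the density about $\mu$ for the mixed block. The one thin step is analyticity: locally uniform domination of each derivative by itself only gives smoothness, so you should either record the factorial-type growth in $k$ of $\sum_n|n|^{2k}e^{-\lambda|n|^2/2}$, or, as the paper does, complexify $(\mu,\Sigma)$ and check that the density stays nonzero there.
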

\begin{proof}
The moments $m_{e_j}$ determine $\mu$ modulo $2\pi$ and the diagonal covariance entries. Writing $L_n=-2\log|m_n|$ gives
\[
 \Sigma_{ij}=\frac12(L_{e_i+e_j}-L_{e_i}-L_{e_j}),\qquad i\ne j,
\]
so all covariance entries are determined. Moreover,
\[
 \dd m_n[V]=m_n\left(in^\top v-\frac12n^\top Hn\right).
\]
A tangent of zero wrapped Fisher norm has zero density derivative and hence zero moment derivatives. Since $m_n\ne0$ at every finite parameter, divide the displayed identity by $m_n$. Taking imaginary parts for $n=e_j$ then gives $v=0$; taking real parts for $e_j$ and $e_i+e_j$ gives $H=0$.

On compact positive-definite parameter sets the Fourier series and all finite derivatives converge normally. Complexifying $\mu$ and the independent entries of $\Sigma$ in a small neighborhood preserves a positive lower bound for the real part of the quadratic form in $n$. Its quadratic decay dominates the linear growth from the imaginary mean. Positivity on the compact real torus implies that the complexified density stays nonzero in a smaller neighborhood. Integrating products of derivatives divided by this density proves real analyticity of the metric. After shifting by $\mu$, the density is even under $\theta\mapsto-\theta$. Mean derivatives are odd and covariance derivatives are even, proving the mixed-block assertion.

Set $Z=X-\mu$. The real Gaussian directional score is
\[
 U_V(X)=v^\top\Sigma^{-1}Z
 +\frac12\left(Z^\top\Sigma^{-1}H\Sigma^{-1}Z
                   -\operatorname{tr}(\Sigma^{-1}H)\right).
\]
Its variance is \eqref{eq:fullgaussian}, by the Gaussian second and fourth moments. The metric coefficients in \eqref{eq:fullgaussian} are independent of $\mu$, so the metric descends under period translations. Differentiating the lift sum, justified by locally uniform Gaussian domination, yields
\[
 U_{\mathrm W,V}(Y)=\E[U_V(X)\mid Y],\qquad
 g_{\mathrm Q}(V,V)-g_{\mathrm W}(V,V)
       =\E[\Var(U_V(X)\mid Y)].
\]
At each observed $y$, all lifts $y+2\pi k$ have positive conditional probability. Zero conditional variance would make the quadratic polynomial $U_V(y+2\pi k)$ constant for every $k\in\Z^d$. Restricting first to $k=te_i$, $t\in\Z$, annihilates the diagonal quadratic and linear coefficients. Restricting next to $k=t(e_i+e_j)$ annihilates the off-diagonal coefficients. Thus $\Sigma^{-1}H\Sigma^{-1}=0$ and subsequently $\Sigma^{-1}v=0$. This contradicts $V\ne0$ and proves strictness.
\end{proof}

\subsection{The complete Gaussian quotient}
\begin{proposition}[Completeness without hyperbolicity]\label{prop:fullcomplete}
The Gaussian parameter quotient $(\mathcal M_d,g_{\mathrm Q})$ is complete and its closed metric balls are compact. If a curve of length $L$ starts at covariance $\Sigma_0$, its final covariance satisfies
\begin{equation}\label{eq:loewnerlength}
 e^{-\sqrt2L}\Sigma_0\preceq\Sigma\preceq e^{\sqrt2L}\Sigma_0.
\end{equation}
For $d\ge2$ its metric is not of constant negative sectional curvature.
\end{proposition}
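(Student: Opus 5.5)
The plan is to reduce everything to the standard Gaussian Fisher metric on $\R^d\times\mathcal P_d$, using the explicit form \eqref{eq:fullgaussian}. The mean and covariance parts decouple in a useful way. First, the Loewner estimate \eqref{eq:loewnerlength}. Take a curve $\Sigma(t)$ with tangent $H$. The metric gives
\[
 \tfrac12\operatorname{tr}\bigl((\Sigma^{-1/2}H\Sigma^{-1/2})^2\bigr)\le g_{\mathrm Q}(V,V).
\]
Hence the operator norm of $A=\Sigma^{-1/2}H\Sigma^{-1/2}$ is at most $\sqrt2\,|\dot\gamma|$. For fixed $x\neq0$ set $f(t)=\log(x^\top\Sigma(t)x)$. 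Then
\[
 f'=\frac{x^\top Hx}{x^\top\Sigma x}=\frac{y^\top Ay}{y^\top y},\qquad y=\Sigma^{1/2}x,
\]
so $|f'|\le\sqrt2|\dot\gamma|$. Integrating gives $e^{-\sqrt2L}x^\top\Sigma_0x\le x^\top\Sigma x\le e^{\sqrt2L}x^\top\Sigma_0x$ for every $x$, which is \eqref{eq:loewnerlength}.

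Next, the compactness of closed balls, from which completeness follows by Hopf--Rinow (or directly, since Cauchy sequences lie in a ball). By \eqref{eq:loewnerlength}, a $d_{\mathrm Q}$-ball of radius $R$ about $(\mu_0,\Sigma_0)$ has covariances confined to the compact set $\{e^{-\sqrt2R}\Sigma_0\preceq\Sigma\preceq e^{\sqrt2R}\Sigma_0\}\subset\mathcal P_d$. The mean component ranges over the compact torus $\T^d$. So the ball lies in a compact subset of $\mathcal M_d$, and a closed subset of a compact set is compact. One point needs care: the metric topology must agree with the manifold topology. This holds because $g_{\mathrm Q}$ is a smooth Riemannian metric. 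The metric descends to the quotient since its coefficients are independent of $\mu$, as already noted in Proposition~\ref{prop:fullregular}.

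Finally, the curvature claim, which I expect to be the main obstacle, though only computationally. The quotient is locally isometric to the Gaussian manifold, so it suffices to exhibit two planes with different sectional curvature, or one plane of nonnegative curvature. The cheapest route uses the fibre at fixed mean. Its induced metric is $\frac12\operatorname{tr}(\Sigma^{-1}H\Sigma^{-1}H)$, the symmetric space $GL_d/O_d$. The trace-scaling direction $\Sigma=e^{s}\Sigma_0$ splits off as a flat factor. However, this fibre is not totally geodesic in the full space, so I would instead argue with the full metric.

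For that I would compute with the known curvature of the Gaussian family (Skovgaard). Restrict to diagonal covariances with $\mu$ varying. The submanifold $\{(\mu,\diag(\sigma_1^2,\dots,\sigma_d^2))\}$ is a product of $d$ copies of the univariate normal manifold, each a hyperbolic plane of curvature $-1/2$. A mixed plane spanned by $\partial_{\mu_1}$ and $\partial_{\mu_2}$ is tangent to two different factors. If that product submanifold is totally geodesic (as a fixed-point set of the isometries $x_i\mapsto -x_i$ on coordinates, acting by $\mu\mapsto D\mu$, $\Sigma\mapsto D\Sigma D$), then the sectional curvature of such a mixed plane is $0$. A plane inside one factor has curvature $-1/2$. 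Nonconstant, and not negative, which finishes the argument for $d\ge2$.

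The step to verify is that the isometries $D=\diag(\pm1)$ preserve $g_{\mathrm Q}$. They do, since they are linear automorphisms of the Gaussian family and commute with the period lattice. The fixed-point set of the group they generate is exactly the diagonal-covariance locus, with $\mu$ restricted to the fixed points of $D$ acting on $\T^d$. Since we work locally, I would lift to $\R^d$ and use $\mu\mapsto D\mu$ about $\mu=0$ to avoid torus fixed-point subtleties.
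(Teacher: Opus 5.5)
Your Loewner estimate and your compactness and completeness argument are the same as the paper's.

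\textbf{The gap is in the curvature step.} The sign changes act by $\mu\mapsto D\mu$ as well as by $\Sigma\mapsto D\Sigma D$. Their common fixed set therefore forces $D\mu=\mu$ for every $D$. Its component through $\mu=0$ is the $d$-dimensional locus $\{0\}\times\{\text{diagonal }\Sigma\}$. It is not the $2d$-dimensional product $\{(\mu,\diag(\sigma_1^2,\dots,\sigma_d^2))\}$ with $\mu$ varying. Your last paragraph concedes exactly this, and lifting to $\R^d$ does not change it.

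That product is in fact not totally geodesic. The Euler--Lagrange equation for $\Sigma$ is $\ddot\Sigma=\dot\Sigma\Sigma^{-1}\dot\Sigma-\dot\mu\dot\mu^\top$. So the geodesic from $(0,I_d)$ with velocity $(e_1+e_2,0)$ has $\ddot\Sigma_{12}(0)=-1$ and leaves the diagonal locus.

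Consequently the value $0$ for the plane spanned by $\partial_{\mu_1},\partial_{\mu_2}$ is wrong. The slices $\{\Sigma=\Sigma_0\}$ are flat, with second fundamental form $\mathrm{II}(e_a,e_b)=\bigl(0,\tfrac12(e_ae_b^\top+e_be_a^\top)\bigr)$. At $\Sigma=I_d$ the Gauss equation gives
$0-\langle\mathrm{II}(e_1,e_1),\mathrm{II}(e_2,e_2)\rangle+|\mathrm{II}(e_1,e_2)|^2=0-0+\tfrac14$.
So that plane has curvature $+\tfrac14$. Your conclusion happens to be true, but the argument you give does not establish it.

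\textbf{Repair.} Use a fixed set that really is totally geodesic.
\begin{itemize}
\item Using all sign changes gives $\{\mu=0,\ \Sigma=\diag(\lambda_j)\}$. Its induced metric is $\tfrac12\sum_j(\dd\log\lambda_j)^2$, which is flat and $d$-dimensional. For $d\ge2$ it supplies tangent $2$-planes of ambient curvature $0$. This is the paper's argument.
\item Using only the sign changes of coordinates $2,\dots,d$ gives the locus $\{\mu_2=\dots=\mu_d=0,\ \Sigma\ \text{diagonal}\}$. It is a totally geodesic Riemannian product of the univariate normal half-plane in $(\mu_1,\lambda_1)$, of curvature $-\tfrac12$, with the flat $(\lambda_2,\dots,\lambda_d)$-space. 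It contains planes of curvature $-\tfrac12$ and $0$, which is close to what you intended.
\end{itemize}
Your reason for abandoning the fixed-mean fibre is also mistaken. That fibre is the fixed set of the isometry $\mu\mapsto2\mu_0-\mu$; equivalently, $\ddot\mu=\dot\Sigma\Sigma^{-1}\dot\mu$ preserves $\dot\mu\equiv0$. Hence it is totally geodesic, and its diagonal flat already gives the paper's route.
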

\begin{proof}
For every fixed $x\ne0$, along a piecewise smooth curve,
\begin{align*}
 \left|\frac{\mathrm d}{\mathrm dt}\log(x^\top\Sigma x)\right|
 &\le \|\Sigma^{-1/2}\dot\Sigma\Sigma^{-1/2}\|_{\mathrm{op}}\\
 &\le \|\Sigma^{-1/2}\dot\Sigma\Sigma^{-1/2}\|_{\mathrm F}
 \le\sqrt2\,\sqrt{g_{\mathrm Q}(\dot\gamma,\dot\gamma)}.
\end{align*}
Integration for all $x$ proves \eqref{eq:loewnerlength}. Approximate minimizing curves then give the same bound with $L$ replaced by distance. Covariances in a bounded metric ball therefore lie in a compact subset of $\mathcal P_d$, and means lie in the compact torus. Since the smooth positive metric induces the manifold topology, the closed ball is a closed subset of that compact parameter set and is compact. Every Cauchy sequence is bounded and hence converges.

For the last assertion, coordinate sign changes are isometries of \eqref{eq:fullgaussian}. The component of their common fixed set with $\mu=0$ consists of diagonal covariances. It is totally geodesic: each sign change fixes the initial data of a geodesic tangent to that component, so uniqueness of geodesics keeps it in the fixed set. For $\Sigma=\diag(\lambda_1,\ldots,\lambda_d)$ the induced metric is $\frac12\sum_j(\dd\log\lambda_j)^2$. This is flat and has two-dimensional tangent planes when $d\ge2$. Total geodesy makes their ambient sectional curvature zero, ruling out constant negative curvature. The broader geometry of the Gaussian Fisher metric is discussed in \cite{GlobkeQuiroga2021}.
\end{proof}

In one observation dimension the contrast between the two constructions can be seen directly. The quotient metric is $s^{-2}(\dd\alpha^2+2\dd s^2)$, so the coordinate circles shrink as $s\to\infty$ but recede to infinite distance. In the wrapped family the uniform law lies at finite distance (Theorem~\ref{thm:completionclassification} with $d=1$), and the metric is smooth and positive there (Lemma~\ref{lem:localcirculardisk}). The coordinate circles therefore close at a finite distance, and since $g_0=2(\dd x^2+\dd y^2)$ their length is asymptotic to $2\pi$ times the remaining distance; see Figure~\ref{fig:cap}.

\begin{figure}[ht]
\centering
\begin{tikzpicture}
\begin{axis}[width=.92\textwidth,height=6.2cm,xlabel={Intrinsic distance toward larger standard deviation},ylabel={Coordinate-circle length},xmin=0,xmax=3.5,ymin=0,ymax=5.5,grid=major,grid style={gray!15},legend style={font=\footnotesize,at={(.98,.97)},anchor=north east}]
\addplot[blue!55!black,very thick] coordinates {(0.0,4.725738577373219) (0.015589324095328316,4.636254168090057) (0.03102426731112584,4.54724753333375) (0.04630951129164884,4.458705259853601) (0.06144952787908886,4.370615850107153) (0.07644859434557327,4.28296947230111) (0.09131080751990617,4.195757734200854) (0.1060400968929831,4.108973478715422) (0.12064023678807134,4.0226105993984085) (0.13511485767430675,3.9366638741351943) (0.14946745669468853,3.851128815411928) (0.1637014074734371,3.766001535681825) (0.1778199692617638,3.6812786264588477) (0.1918262954757911,3.59695704987765) (0.20572344167554057,3.5130340415616255) (0.2195143730294662,3.429507023737956) (0.2332019713049741,3.3463735276297815) (0.24678904142164337,3.263631124241024) (0.26027831760044395,3.1812773627292903) (0.27367246913910304,3.0993097156366622) (0.2869741058408701,3.0177255303173585) (0.30018578312125094,2.9365219859654568) (0.3133100068148196,2.855696055705237) (0.32634923770193097,2.7752444732615555) (0.33930589577306575,2.695163703778193) (0.35218236424659677,2.6154499183985185) (0.36498099335399814,2.5360989722653953) (0.37770410390487813,2.4571063856361133) (0.39035399064273907,2.3784673278435555) (0.4029329254010089,2.3001766038669547) (0.41544316006766807,2.2222286433046374) (0.42788692936570133,2.1446174915673213) (0.44026645345562687,2.0673368031338706) (0.4525839403655061,1.9903798367322558) (0.46484158825309285,1.9137394523267897) (0.4770415875041622,1.837408109808764) (0.48918612267054007,1.76137786930152) (0.5012773742509489,1.68564039300286) (0.5133175203174813,1.6101869484976676) (0.5253087379903113,1.5350084134818676) (0.5372532047631412,1.460095281845401) (0.5491530996818753,1.3854376710670109) (0.5610106043790745,1.311025330877301) (0.5728279039669022,1.2368476531489878) (0.584607187791502,1.1628936829745582) (0.5963506500520378,1.0891521308918166) (0.608060490287986,1.0156113862172227) (0.6197389137386822,0.9422595314454951) (0.6313881315795677,0.869084357671953) (0.6430103610400872,0.7960733809914419) (0.6546078254086833,0.723213859824714) (0.6661827539308852,0.6504928131197822) (0.6777373816070168,0.5778970393722308) (0.689273948896586,0.5054131364048402) (0.7007947013369495,0.43302752184320575) (0.7123018890843412,0.3607264542204854) (0.7237977663858308,0.288496054640987) (0.7352845909912104,0.2163223289291741) (0.7467646235141959,0.14419119018781393) (0.7582401267526627,0.07208848168652934) (0.7697133649779095,0.0)};
\addlegendentry{Wrapped model}
\addplot[red!70!black,dashed,thick,domain=0:3.5,samples=100] {2*pi/1.2*exp(-x/sqrt(2))};
\addlegendentry{Gaussian parameter quotient}
\addplot[orange!85!black,dotted,thick,domain=.38:.769713364978] {2*pi*(.769713364978-x)};
\addlegendentry{Tangent at $\delta=D_*$, slope $-2\pi$}
\end{axis}
\end{tikzpicture}
\caption{One observation dimension: lengths of the coordinate circles $\{s\}\times\T$ for $s\ge s_0=1.2$, plotted against the intrinsic distance from the circle at $s_0$ in each model. The wrapped circle closes at the uniform law, at distance $D_*\approx0.7697$, with limiting slope $-2\pi$; the quotient circle closes only at infinite distance. The wrapped curve is computed numerically from the Fisher coefficients and is an illustration only.}\label{fig:cap}
\end{figure}

\section{The full intrinsic Fisher completion}\label{sec:classification}
Write $\widehat{\mathcal M}_{d,\mathrm W}$ for the metric completion and use $d_{\mathrm W}$ also for its completed distance. All density norms below use normalized Haar measure $\nu$. Our convention for total variation is $d_{\mathrm{TV}}(F,G)=\frac12\|F-G\|_1$.

For the exact parametrization of the limiting laws, call a subgroup $L\le\Z^d$ \emph{primitive} if $kn\in L$, for $n\in\Z^d$ and a nonzero integer $k$, implies $n\in L$. For such an $L$, a character $\chi:L\to S^1$, and a positive-definite quadratic form $Q$ on $\operatorname{span}_{\R}L$, define $F_{L,\chi,Q}$ by its moments
\begin{equation}\label{eq:classifiedmoments}
 m_n(F_{L,\chi,Q})=
 \begin{cases}\chi(n)e^{-Q(n)/2},&n\in L,\\0,&n\notin L.\end{cases}
\end{equation}
For $L=\{0\}$ this is the uniform density. An integer change of torus coordinates expresses this law as $U_{d-r}\otimes\mathrm{WN}_r(\beta,B)$, where $r=\operatorname{rank}L$ and $B$ is positive definite. Let $\mathcal C_d$ be the set of these densities, with its $L^1$ topology, equivalently its total-variation topology. Integer coordinate changes describe the laws; they do not impose additional identifications on the fixed torus.

\begin{example}[The case $d=2$]\label{ex:d2}
For $X\sim N_2(\mu,\Sigma)$ and $n\in\Z^2$, the angle $n\cdot Y$ is the wrapped image of the real normal variable $n\cdot X$, with mean $n\cdot\mu$ and variance $n^\top\Sigma n$. Thus $|m_n|=e^{-n^\top\Sigma n/2}$ measures the concentration of this angle alone. Along a sequence of parameters, each frequency either survives, with $n^\top\Sigma_jn$ bounded, or disappears, with $n^\top\Sigma_jn\to\infty$. The proof of Lemma~\ref{lem:densityclosure} shows that, for Gaussian limits, the surviving frequencies form a primitive subgroup: sums and negatives of surviving frequencies survive, and $Q(kn)=k^2Q(n)$ rules out a surviving multiple $kn$ of a disappearing $n$. In $\Z^2$ the only possibilities are $L=\{0\}$, $L=\Z n$ for a primitive $n=(p,q)$ taken up to sign, and $L=\Z^2$. The last gives the finite family. The added points are therefore the uniform law $U_2$ and, for each primitive direction $n$, the laws under which $pY_1+qY_2$ is a circular wrapped normal $\mathrm{WN}_1(\beta,b)$ while the complementary integer coordinate is uniform and independent; here $\chi(kn)=e^{ik\beta}$ and $Q(kn)=k^2b$.

Figure~\ref{fig:latticeexample} shows four parameter sequences, with $Z\sim N(0,t)$ and $W\sim N(0,b)$ independent. For $\Sigma=tI_2$ every nonzero frequency disappears, and the limit is $U_2$. For $\Sigma=\operatorname{diag}(t,b)$ the frequencies $(0,q)$ survive, so $L=\Z(0,1)$ and $Y_2$ remains a wrapped normal. For $X=(Z,Z+W)$ one has $n\cdot X=(n_1+n_2)Z+n_2W$, so exactly the frequencies with $n_1+n_2=0$ survive; then $L=\Z(1,-1)$ and $Y_2-Y_1$ remains a wrapped normal, although neither coordinate does. The last panel uses $\Sigma_j=jB_j$ with $B_j=\left(\begin{smallmatrix}1+j^2&-j\\-j&1\end{smallmatrix}\right)$. Since $B_j$ is unimodular with lattice minimum $1$, every nonzero frequency satisfies $n^\top\Sigma_jn\ge j$, so the limit is again $U_2$, although $\lambda_{\min}(\Sigma_j)\sim1/j\to0$. Convergence is therefore governed by the quadratic form on the integer lattice, not by the Euclidean eigenvalues; this is the content of the criterion in Theorem~\ref{thm:completionclassification}.

For each primitive $n$, the laws with $L=\Z n$ form a two-parameter family indexed by $(\beta,b)$; Proposition~\ref{prop:pieces} below identifies its closure with the completed circular family, a disk centered at $U_2$. Distinct disks meet only at $U_2$. The completion also contains the five-dimensional finite family, and distances between different disks may use it, so the added set does not carry the distance of a wedge of disks.
\end{example}

\begin{figure}[ht]
\centering
\newcommand{\latticepanel}[5]{%
\begin{tikzpicture}[scale=0.27]
\draw[gray!50,line width=0.3pt] (-4.7,0)--(4.7,0); \draw[gray!50,line width=0.3pt] (0,-4.7)--(0,4.7);
\foreach \p in {-4,...,4}{\foreach \q in {-4,...,4}{%
\pgfmathsetmacro{\v}{\p*\p*(#1)+2*\p*\q*(#2)+\q*\q*(#3)}%
\pgfmathsetmacro{\o}{max(0.04,exp(-\v/2))}%
\fill[blue!55!black,fill opacity=\o] (\p,\q) circle (0.32);
\draw[gray!60,line width=0.2pt] (\p,\q) circle (0.32);}}
#4
\node[font=\scriptsize,align=center] at (0,-6.2) {#5};
\end{tikzpicture}}
\latticepanel{6}{0}{6}{\draw[orange!85!black,line width=0.8pt] (0,0) circle (0.5);}{$\Sigma=tI_2$\\ $L=\{0\}$}\hfill
\latticepanel{6}{0}{1}{\foreach \q in {-4,...,4}{\draw[orange!85!black,line width=0.8pt] (0,\q) circle (0.5);}}{$\Sigma=\operatorname{diag}(t,b)$\\ $L=\Z(0,1)$}\hfill
\latticepanel{6}{6}{7}{\foreach \q in {-4,...,4}{\draw[orange!85!black,line width=0.8pt] (-\q,\q) circle (0.5);}}{$X=(Z,Z+W)$\\ $L=\Z(1,-1)$}\hfill
\latticepanel{222}{-36}{6}{\draw[orange!85!black,line width=0.8pt] (0,0) circle (0.5);}{$\Sigma_j=jB_j$, $j=6$\\ $L=\{0\}$, $\lambda_{\min}\to0$}
\caption{Surviving frequencies for $d=2$. Each disk is an integer frequency $n=(n_1,n_2)$, shaded by $|m_n|=e^{-n^\top\Sigma n/2}$ at $t=6$, $b=1$ (last panel: $j=6$). Rings mark the frequencies that survive in the limit $t\to\infty$ or $j\to\infty$; the shading is a single snapshot. The surviving set is the subgroup $L$ of Theorem~\ref{thm:maincompletion}. In the last panel the smallest covariance eigenvalue tends to zero, yet every nonzero frequency disappears.}\label{fig:latticeexample}
\end{figure}

\subsection{The statistical closure}
\begin{lemma}[Lipschitz observables]\label{lem:observables}
For a bounded complex observable $\varphi$ on the torus, $P\mapsto\E_P\varphi$ is $\|\varphi\|_\infty$-Lipschitz for $d_{\mathrm W}$. In particular,
\begin{equation}\label{eq:momentlower}
 |m_n(P)-m_n(P')|\le\|F_P-F_{P'}\|_1\le d_{\mathrm W}(P,P'),\qquad n\in\Z^d.
\end{equation}
The density map extends continuously to the completion, with values in the nonnegative $L^1$ densities of integral one.
\end{lemma}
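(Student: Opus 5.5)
The plan is to compare the $L^1$ distance of densities with the Fisher length of curves inside $\mathcal M_d$, and then pass to the completion by uniform continuity. For the first inequality in \eqref{eq:momentlower}, and more generally for the Lipschitz bound on $\E_P\varphi$, we use the elementary estimate
\[
 |\E_P\varphi-\E_{P'}\varphi|=\Bigl|\int\varphi\,(F_P-F_{P'})\dd\nu\Bigr|\le\|\varphi\|_\infty\|F_P-F_{P'}\|_1 .
\]
For the moment case we take $\varphi=e^{in\cdot\theta}$, which has sup norm $1$. It then remains to prove $\|F_P-F_{P'}\|_1\le d_{\mathrm W}(P,P')$ for finite parameters.

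\textbf{Bounding $L^1$ distance by Fisher length.} Let $\gamma:[0,1]\to\mathcal M_d$ be a piecewise smooth curve from $P$ to $P'$, and write $F_t=F_{\gamma(t)}$. By Proposition~\ref{prop:fullregular} and the normal convergence of the Fourier series on compact parameter sets, $t\mapsto F_t(\theta)$ is $C^1$ uniformly in $\theta$. We may therefore differentiate under the integral and get
\[
 \|F_1-F_0\|_1\le\int_0^1\!\!\int|\partial_tF_t|\dd\nu\dd t .
\]
Since $F_t>0$, Cauchy--Schwarz gives
\[
 \int|\partial_tF_t|\dd\nu=\int\Bigl|\partial_t\log F_t\Bigr|F_t\dd\nu\le\Bigl(\int(\partial_t\log F_t)^2F_t\dd\nu\Bigr)^{1/2}=\sqrt{g_{\mathrm W}(\dot\gamma,\dot\gamma)}.
\]
Hence $\|F_1-F_0\|_1$ is at most the $g_{\mathrm W}$-length of $\gamma$. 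Taking the infimum over all such curves yields $\|F_P-F_{P'}\|_1\le d_{\mathrm W}(P,P')$.

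\textbf{Extension to the completion.} The density map is $1$-Lipschitz from $(\mathcal M_d,d_{\mathrm W})$ into $L^1(\nu)$, and $L^1(\nu)$ is complete. It therefore extends uniquely to a $1$-Lipschitz map on $\widehat{\mathcal M}_{d,\mathrm W}$: a Cauchy sequence of parameters maps to a Cauchy sequence of densities, and equivalent Cauchy sequences have the same limit. The set of nonnegative densities of integral one is closed in $L^1$, since both $\int F\dd\nu=1$ and $\|F^-\|_1=0$ pass to $L^1$ limits, so the extended map takes values there. The Lipschitz property of $\E_P\varphi$ extends in the same way, and \eqref{eq:momentlower} then holds on the completion as well.

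\textbf{Main obstacle.} The only delicate step is justifying differentiation under the integral and the identity $\partial_tF=F\,\partial_t\log F$ with the score having finite second moment. Both follow from the positivity and uniform smoothness of the densities on compact subsets of $\mathcal M_d$, which Proposition~\ref{prop:fullregular} supplies. Piecewise smooth curves are handled by splitting the time interval into smooth pieces.
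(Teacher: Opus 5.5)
Your proposal is correct and follows essentially the same route as the paper: bound $\int|\partial_tF_t|\,\mathrm d\nu$ by the Fisher speed via Cauchy--Schwarz, integrate along connecting curves and take infima, then extend by completeness of $L^1$ and closedness of the probability densities. The only cosmetic difference is that you obtain the observable bound from the $L^1$ bound, whereas the paper differentiates $\E_P\varphi$ directly; the two are equivalent.
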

\begin{proof}
If $U_V$ is a directional score, Cauchy--Schwarz gives
$|\dd\E_P\varphi[V]|\le\|\varphi\|_\infty\sqrt{g_{\mathrm W}(V,V)}$ and
$\int|\dd F[V]|\dd\nu=\E_P|U_V|\le\sqrt{g_{\mathrm W}(V,V)}$.
Integrating along connecting curves and taking infima proves the distance bounds. The moment bound by $L^1$ follows directly by integration. Completeness of $L^1$ gives the extension, and an $L^1$ limit of probability densities is again a probability density.
\end{proof}

\begin{lemma}[Closure of the densities]\label{lem:densityclosure}
The $L^1$ closure of $\{F_{\mu,\Sigma}:(\mu,\Sigma)\in\mathcal M_d\}$ is precisely $\mathcal C_d$ from \eqref{eq:classifiedmoments}. Every density in this closure is strictly positive and real analytic, and its triple $(L,\chi,Q)$ is unique.
\end{lemma}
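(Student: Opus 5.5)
Two inclusions and a uniqueness statement are needed; we handle them in that order.

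For the inclusion of the $L^1$ closure in $\mathcal C_d$, take $F_j=F_{\mu_j,\Sigma_j}\to F$ in $L^1$. By \eqref{eq:momentlower}, $m_n(F_j)\to m_n(F)$ for every $n$, so $|m_n(F)|=\lim e^{-n^\top\Sigma_jn/2}$ and the limit $Q(n)=\lim n^\top\Sigma_jn\in[0,\infty]$ exists. Set $L=\{n:Q(n)<\infty\}$.
\begin{itemize}
\item $L$ is a subgroup: $n\mapsto(n^\top\Sigma_jn)^{1/2}$ is a seminorm, so the triangle inequality shows $L$ is closed under sums, and it is closed under negatives by symmetry.
\item The parallelogram law passes to the limit, so $Q$ is a quadratic form on $L$ satisfying $Q(kn)=k^2Q(n)$.
\item $L$ is primitive: if $kn\in L$ then $Q(n)=Q(kn)/k^2<\infty$, so $n\in L$.
\item $\chi(n)=\lim e^{in\cdot\mu_j}$ exists for $n\in L$ because $m_n(F_j)$ converges to a nonzero limit there. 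Multiplicativity passes to the limit, so $\chi$ is a character.
\end{itemize}

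The main point is that $Q$ is positive definite on $\operatorname{span}_\R L$, since a priori it is only positive semidefinite. Choose a basis $b_1,\dots,b_r$ of $L$ and extend it, using primitivity, to a basis of $\Z^d$. In these integer coordinates the law of $(b_1\cdot Y,\dots,b_r\cdot Y)$ is a wrapped normal with covariance $B_j\to B=\mathrm{Gram}(Q)$. The $L^1$ limit of these marginals is a probability density. If $B$ were singular, say $v^\top Bv=0$ for some $v\neq0$, the marginal of $F$ would be concentrated on a subtorus or on a proper closed set. This contradicts absolute continuity, because the limit marginal is an $L^1$ density. Concretely, $m_{kv'}$ would fail to decay for rational approximants $v'$, contradicting Riemann--Lebesgue. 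I expect this step to need the most care: it requires either a Dirichlet-approximation argument on $\exp(-Q(n)/2)$ along a sequence $n$ with $Q(n)$ bounded and $|n|\to\infty$, or the observation that $Q(n)\ge0$ vanishing on a nonzero real vector forces $\inf_{n\in L\setminus0}Q(n)$ to be small along infinitely many $n$. Either way this contradicts $m_n(F)\to0$ as $|n|\to\infty$. Once positive definiteness holds, the moments of $F$ are exactly those in \eqref{eq:classifiedmoments}.

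For the reverse inclusion, given $(L,\chi,Q)$, pass to adapted integer coordinates with the last $r$ basis vectors spanning $L$. Take $\Sigma_t=\mathrm{diag}(tI_{d-r},B)$ with $t\to\infty$ and the mean $\beta$ in the last block. The densities are then $h$-type products, with the first factor converging uniformly to $1$; the series bound $\sum_{n\ne0}e^{-t|n|^2/2}\to0$ gives uniform, hence $L^1$, convergence.

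Positivity and analyticity follow from the product form $U_{d-r}\otimes\mathrm{WN}_r(\beta,B)$. The wrapped normal with $B\succ0$ is positive, as a sum of Gaussian lifts, and analytic, because its Fourier coefficients decay like $e^{-c|n|^2}$. Composing with an integer linear automorphism of the torus preserves both properties. Uniqueness of the triple is immediate, since $L$, $\chi$ and $Q$ are read off from the moments of $F$, and the moments determine $F$ by Fourier uniqueness.
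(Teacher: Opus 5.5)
Your proposal is correct. It matches the paper everywhere except the nondegeneracy step. The subgroup and primitivity arguments agree (the paper uses $Q_j(n+p)\le2Q_j(n)+2Q_j(p)$ where you use the triangle inequality for $\sqrt{Q_j}$). So do polarization of the Gram matrix in adapted coordinates, approximation by $\diag(tI_{d-r},B)$, positivity and analyticity from the product form, and uniqueness read off from the moments.

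The paper proves $B\succ0$ on the measure side. A degenerate Gaussian is carried by a lower-dimensional affine subspace. Its image modulo $2\pi\Z^r$ is a countable union of Lipschitz images of bounded lower-dimensional sets, hence Haar-null even when dense, so the residual marginal of $F$ would be singular.

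Your first formulation of this step, ``concentrated on a subtorus or on a proper closed set,'' is not right as stated. For an irrational kernel direction the image is dense. Moreover, concentration on a proper closed set would not contradict absolute continuity; only Haar-nullity does.

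Your Fourier-side argument, however, is sound and avoids this issue. A nonzero kernel vector of $B$ gives, by Dirichlet's simultaneous approximation or Minkowski's theorem, nonzero $c\in\Z^r$ with $c^\top Bc$ arbitrarily small. Either these $c$ are unbounded, so $|m_n(F)|=e^{-Q(n)/2}$ stays bounded below along frequencies $n=\sum_ic_ib_i\in L$ with $|n|\to\infty$. Or they stay bounded, so some nonzero integer $c$ satisfies $c^\top Bc=0$ and all its multiples have moment modulus one. Both cases contradict Riemann--Lebesgue for $F\in L^1$. This is exactly the Minkowski mechanism the paper uses later, in the proof of Proposition~\ref{prop:compactcriterion}.

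The paper's route is shorter and needs no Diophantine input. Yours works entirely with limiting moments and never identifies the possibly degenerate limit law first. It also makes explicit the link between nondegeneracy of $Q$ and decay of the Fourier coefficients, which is what later drives the compactness criterion.
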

\begin{proof}
Suppose $F_j\to F$ in $L^1$, and put $Q_j(n)=n^\top\Sigma_j n$ and
$L=\{n\in\Z^d:|m_n(F)|>0\}$. Moment convergence implies
\[
 Q_j(n)\longrightarrow-2\log|m_n(F)|<\infty\quad(n\in L),
 \qquad Q_j(n)\longrightarrow\infty\quad(n\notin L).
\]
The inequality $Q_j(n+p)\le2Q_j(n)+2Q_j(p)$ shows that $L$ is a subgroup. The identity $Q_j(kn)=k^2Q_j(n)$ shows that it is primitive. A basis of a primitive subgroup extends to an integer basis of $\Z^d$. Choose $T\in\mathrm{GL}(d,\Z)$ whose last $r$ rows are a basis of $L$, where $r=\operatorname{rank}L$, and set $m=d-r$. Under the observation transformation $\theta\mapsto T\theta$, write
\begin{equation}\label{eq:adaptedblocks}
 \mu_j=(\alpha_j,\beta_j),\qquad
 \Sigma_j=\begin{pmatrix}A_j&C_j\\C_j^\top&B_j\end{pmatrix},
 \qquad S_j=A_j-C_jB_j^{-1}C_j^\top.
\end{equation}
Thus $L$ is the last $r$-coordinate frequency subgroup. The surviving moments at the residual frequencies $e_{m+i}$ and $e_{m+i}+e_{m+l}$ recover $B_j\to B\succeq0$ by polarization, and their phases recover $\beta_j\to\beta$ in $\T^r$. The limiting phases on $L$ form a character. Fourier uniqueness identifies the limiting law as $U_m\otimes\mathrm{WN}_r(\beta,B)$, initially allowing a degenerate Gaussian.

In fact $B>0$. If its rank is less than $r$, its Euclidean Gaussian is supported on a lower-dimensional affine subspace. The image of this subspace modulo $2\pi\Z^r$ is a countable union of Lipschitz images of bounded lower-dimensional sets, hence has Haar measure zero, even if it is dense. The wrapped law is singular. This contradicts absolute continuity of the residual marginal of $F$. If $r=0$, the limit is simply uniform. If $r=d$, the same argument shows that the limit belongs to the original family.

Conversely, in these coordinates each stated law is the uniform limit of product densities with mean $(0,\beta)$ and covariance $\diag(aI_m,B)$ as $a\to\infty$. The product representation also proves positivity and real analyticity. Finally $L$ is recovered from the nonzero moments, $Q(n)=-2\log|m_n|$ on $L$, and $\chi(n)=m_n/|m_n|$ there. Values on a lattice basis and its pairwise sums determine the quadratic form on its real span, proving uniqueness.
\end{proof}

This lemma identifies distributions, not yet metric endpoints. We next show that all these limits are reached by finite-length curves and that approaches with the same limiting density agree in the metric completion.

\subsection{Uniform intrinsic distance estimates}
For a positive-definite $m\times m$ matrix $S$, $m\ge1$, define
\begin{equation}\label{eq:latticemin}
 \ell(S)=\min_{k\in\Z^m\setminus\{0\}}k^\top Sk,
 \qquad \Theta_m(S)=\sum_{k\ne0}e^{-k^\top Sk/2}.
\end{equation}
\begin{lemma}[A lattice Gaussian bound]\label{lem:latticetheta}
There is $c_m<\infty$ depending only on $m$ such that
$\Theta_m(S)\le c_me^{-\ell(S)/2}$ whenever $\ell(S)\ge1$.
\end{lemma}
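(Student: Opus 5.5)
The plan is to compare the lattice $\Z^m$ with its Gram form $S$ through a reduced basis. The bound then follows from counting lattice points in ellipsoidal shells, together with a scaling trick that absorbs the dependence on $S$ into the exponential factor.

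\textbf{Step 1: reduce by unimodular change.} Replacing $S$ by $U^\top SU$ with $U\in\mathrm{GL}(m,\Z)$ changes neither $\ell(S)$ nor $\Theta_m(S)$, so we may assume $S$ is Minkowski (or Hermite--Korkine--Zolotarev) reduced. For a reduced form there is a constant $\kappa_m\ge1$, depending only on $m$, with
\[
 k^\top Sk\ \ge\ \kappa_m^{-1}\sum_i s_{ii}k_i^2 ,
\]
and $s_{ii}\ge\ell(S)$ for every $i$.

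\textbf{Step 2: split off a fixed fraction of the exponent.} For $k\ne0$, write
\[
 \tfrac12 k^\top Sk=\tfrac{1}{4\kappa_m}\,k^\top Sk+\Big(\tfrac12-\tfrac1{4\kappa_m}\Big)k^\top Sk
\]
after arranging $\kappa_m\ge1$. The second part is at least $(\tfrac12-\tfrac1{4\kappa_m})\ell(S)$, which falls short of $\ell/2$. That is the wrong direction, so instead use the following cleaner device.

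\textbf{Step 2$'$: use monotonicity in $\ell$.} Put $f(S)=e^{\ell(S)/2}\Theta_m(S)=\sum_{k\ne0}e^{-(k^\top Sk-\ell)/2}$. Then $k^\top Sk-\ell\ge0$, with equality on finitely many minimal vectors, and the task is to bound the number of $k$ with $k^\top Sk\le \ell+t$ polynomially in $t$, uniformly.

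By Step 1, the diagonal comparison gives
\[
 \#\{k:k^\top Sk\le R\}\le\prod_i\bigl(2\sqrt{\kappa_mR/s_{ii}}+1\bigr)\le\bigl(2\sqrt{\kappa_mR/\ell}+1\bigr)^m .
\]
With $R=\ell+t$ and $\ell\ge1$, this is at most $C_m(1+t)^{m/2}$.

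\textbf{Step 3: sum over shells.} Splitting into the shells $\ell+t\le k^\top Sk<\ell+t+1$ for $t=0,1,2,\dots$ gives
\[
 f(S)\le\sum_{t\ge0}C_m(2+t)^{m/2}e^{-t/2}=:c_m<\infty .
\]
This is uniform in $S$ and proves the lemma.

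The main obstacle is Step 1: the reduction-theory inequality comparing a reduced form with its diagonal, together with $s_{ii}\ge\ell$. Reduced forms satisfy $s_{11}=\ell$ and $s_{ii}$ nondecreasing, and the comparison constant is classical (Minkowski/Hermite). I would cite it rather than reprove it.

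If a self-contained route is preferred, the counting in Step 2$'$ can be done by a packing argument instead. The points $k$ with $k^\top Sk\le R$ are $\sqrt{\ell}$-separated in the $S$-norm, so balls of $S$-radius $\sqrt\ell/2$ around them are disjoint. All of these balls lie in the $S$-ball of radius $\sqrt R+\sqrt\ell/2$. Comparing volumes gives a count of at most $(2\sqrt{R/\ell}+1)^m$, with no reduction theory needed; this is the route I would actually write.
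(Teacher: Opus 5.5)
Your argument is correct, and the packing route you say you would actually write is essentially the paper's proof: disjoint balls of radius $\sqrt{\ell(S)}/2$ about the points of $S^{1/2}\Z^m$ give polynomial shell counts, these are summed against the Gaussian weights, and $\ell(S)\ge1$ makes the resulting constant independent of $S$. The differences are cosmetic. You index shells by the excess $k^\top Sk-\ell$ in unit steps, whereas the paper uses $h\sqrt{\ell}\le|x|<(h+1)\sqrt{\ell}$ and then bounds $e^{-(h^2-1)\ell/2}\le e^{-(h^2-1)/2}$. The reduction-theory material in Steps~1 and~2 can simply be dropped.
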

\begin{proof}
The lattice $S^{1/2}\Z^m$ has minimum separation $\rho=\sqrt{\ell(S)}$. Disjoint balls of radius $\rho/2$ centered at its points show that at most $(2h+3)^m$ points have norm less than $(h+1)\rho$. Counting shells $h\rho\le|x|<(h+1)\rho$, $h\ge1$, gives
\[
 \Theta_m(S)\le\sum_{h\ge1}(2h+3)^m e^{-h^2\ell(S)/2}
 \le e^{-\ell(S)/2}\sum_{h\ge1}(2h+3)^m e^{-(h^2-1)/2}.
\]
The final series is finite.
\end{proof}

Gaussian sums controlling approximation to uniformity modulo a lattice also appear in the estimates of Micciancio and Regev for the smoothing parameter $\eta_\epsilon$ of a lattice \cite{MicciancioRegev2007}. Here the additional requirement is a bound for lengths of paths inside the fixed wrapped family. In particular, the estimate must allow arbitrary cross-covariance and cannot assume a uniform bound on the condition number.

Fix $0\le r<d$, put $m=d-r$, and work in fixed adapted integer coordinates. For $r>0$ let $B\in\mathcal P_r$, $\beta\in\T^r$, and denote the residual wrapped density by $f_{\beta,B}$. Set
\[
 c_B=\min f_{\beta,B}>0,\qquad
 K_B=\sup_{x\in\R^r}\sum_{l\in\Z^r}e^{-(l+x)^\top B(l+x)/2}.
\]
Both are independent of $\beta$. The quantities $K_B$ and $c_B^{-1}$ are bounded on compact subsets of $\mathcal P_r$. Indeed, the shifted sum is periodic in $x$ and has uniformly summable Gaussian tails there; positivity and compactness give the density bound. The same argument applies to sums with any fixed polynomial weight in $|l+x|$. For $r=0$ omit residual variables and set $c_B=K_B=1$.

\begin{proposition}[Distance to a partial uniform law]\label{prop:endpointdistance}
The product curve with mean $(0,\beta)$ and covariance $\diag(tI_m,B)$, $t\ge1$, has a completion endpoint $E(\beta,B)$ with density $f_{\beta,B}$, constant in the first $m$ variables. For
\begin{equation}\label{eq:schurparam}
 P=\left((\alpha,\beta),
 \begin{pmatrix}S+D^\top BD&D^\top B\\BD&B\end{pmatrix}\right),
 \qquad S\in\mathcal P_m,\quad D\in\R^{r\times m},
\end{equation}
the curve obtained by replacing $S$ by $tS$, $t\ge1$, has finite length and tends to this same endpoint. Whenever $\ell(S)$ is sufficiently large,
\begin{equation}\label{eq:partialdistance}
 e^{-r\|B\|_{\mathrm{op}}/8}e^{-\ell(S)/2}
 \le d_{\mathrm W}(P,E(\beta,B))
 \le C_{B,m}e^{-\ell(S)/2}.
\end{equation}
The threshold and upper constant can be chosen uniformly for $B$ in a compact subset of $\mathcal P_r$, independently of $D,\alpha,\beta$. For $r=0$ the left prefactor is one.
\end{proposition}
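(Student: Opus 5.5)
The proof will bound the length of the ray $t\mapsto P_t$, where $S$ is replaced by $tS$, directly, and obtain the lower bound from Lemma~\ref{lem:observables}.

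\emph{Block structure.} Along the ray only the $S$-block moves, so the tangent is $V=(0,H)$ with $H=\diag(S,0)$ in Schur coordinates. Write $X=(X_1,X_2)$ with $X_2\sim N(\beta,B)$ and $X_1=DX_2+Z$ up to the mean shift, with $Z\sim N(\alpha',tS)$ independent of $X_2$. Modulo $2\pi$, the conditional law of $Y_1$ given $X_2$ is a wrapped normal with covariance $tS$ and mean shifted by $DX_2$. Its moments are $e^{ik\cdot(\alpha+DX_2)-t\,k^\top Sk/2}$. The residual marginal is exactly $f_{\beta,B}$, independent of $t$.

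\emph{Upper bound on length.} By data processing (the score of $Y$ is a conditional expectation of the score of $(Y_1,X_2)$), $g_{\mathrm W}(\dot P_t,\dot P_t)$ is at most the Fisher information of the pair $(Y_1,X_2)$. Because the $X_2$ marginal is fixed, this equals $\E_{X_2}$ of the Fisher information of the conditional wrapped law of $Y_1$ along $t\mapsto tS$ at a fixed shift. That quantity is shift invariant, hence independent of $D$, $\alpha$ and $\beta$.

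The step I expect to be the main obstacle is the one-fiber information bound. I need the Fisher information of $h_t(\theta)=1+\sum_{k\ne0}e^{-tk^\top Sk/2}e^{ik\cdot\theta}$ in the direction $t$ to be $O(\ell(S)^2e^{-t\ell(S)})$. I would use $\int(\partial_th_t)^2/h_t\le(\min h_t)^{-1}\|\partial_th_t\|_2^2$, with Parseval:
\[
\|\partial_t h_t\|_2^2=\sum_{k\ne0}\tfrac14(k^\top Sk)^2e^{-tk^\top Sk}.
\]
The bound $\min h_t\ge 1-\Theta_m(tS)\ge\tfrac12$ holds once $\ell(S)$ is large, by Lemma~\ref{lem:latticetheta}. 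The weighted sum is handled by the same shell count as Lemma~\ref{lem:latticetheta}, with a polynomial weight, giving $\le c'_m\,\ell(S)^2e^{-t\ell(S)}$ for $t\ell(S)\ge1$. Then
\[
\int_1^\infty\sqrt{g}\,dt\le C\int_1^\infty \ell(S)e^{-t\ell(S)/2}dt\le C_m e^{-\ell(S)/2}.
\]
This shows that the ray has finite length and gives the upper bound, uniformly in $D,\alpha,\beta$. In the Schur argument I may have to avoid dependence on $B$; the constant $C_{B,m}$ allows some $B$-dependence anyway.

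\emph{Same endpoint.} The ray is Cauchy, so it has a completion limit, and its density converges in $L^1$ to $f_{\beta,B}$ by Fourier moments. To identify the limits of all such rays (varying $S$, $D$, $\alpha$) with $E(\beta,B)$, I would connect rays at large $t$ by short paths. Rays with different $D$ and $\alpha$ but equal $S$ are joined at parameter $t$ through the conditional shift: moving $\alpha$, or moving $D$ along a segment, changes only the conditional mean. The resulting information is a bounded multiple of the mean information of a wrapped normal with covariance $tS$, which by Parseval is $\lesssim \sum|k|^2e^{-tk^\top Sk}\to0$. Different $S$ are joined through $\diag$-type interpolation at large $t$, using the same estimate. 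So the connecting lengths tend to zero and the endpoints coincide. The product curve is the special case $S=I$, $D=0$.

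\emph{Lower bound.} By \eqref{eq:momentlower}, $d_{\mathrm W}(P,E)\ge|m_n(P)-m_n(E)|$. Take $n=(k,0)$ with $k$ a minimizer of $\ell(S)$; then $m_n(E)=0$. The moment $m_n(P)=e^{ik\cdot\alpha}\E e^{ik\cdot DX_2}e^{-k^\top Sk/2}$ has modulus $e^{-\ell(S)/2}e^{-|B^{1/2}D^\top k|^2/2}$, which depends on $D$. This is why the observable has to be chosen more cleverly. I would instead test with $\varphi(\theta)=e^{ik\cdot\theta_1}\psi(\theta_2)$, choosing $\psi$ to undo the conditional phase: with the density of $X_2 \bmod 2\pi$ in hand, take $\psi$ equal to the conjugate of $e^{ik\cdot D x}$ averaged over lifts, normalized by $\|\psi\|_\infty\le1$. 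Using $K_B$ and $c_B$, this recovers a factor $\ge e^{-r\|B\|_{\mathrm{op}}/8}$ for the surviving phase coherence. This factor is where the stated prefactor should come from, and for $r=0$ it is $1$.
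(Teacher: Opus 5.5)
Your proposal has a genuine gap in the lower bound. You correctly observe that the frequency $(k,0)$ is damped by $e^{-(Dk)^\top B(Dk)/2}$, which is not controlled in $D$. However, the replacement test function is only sketched, and the prefactor $e^{-r\|B\|_{\mathrm{op}}/8}$ is asserted rather than derived; nothing in $K_B$ or $c_B$ produces it.

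Taken literally, your choice $\psi(Y_2)=\overline{\E[e^{i(Dk)\cdot X_2}\mid Y_2]}$ gives
\[
 |\E_P\varphi|=e^{-\ell(S)/2}\,\E\bigl|\E[e^{i(Dk)\cdot X_2}\mid Y_2]\bigr|^2 ,
\]
which squares the phase coherence. For $r=1$, $Dk=\tfrac12$ and a large scalar $B$, the conditional phase has modulus about $2e^{-B/8}|\cos((Y_2-\beta)/2)|$. So this quantity is roughly $2e^{-B/4}e^{-\ell(S)/2}$, which is below the claimed bound. Renormalizing $\psi$ can help, but proving the stated factor then still requires comparison with a Fourier coefficient of the residual law.

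The missing idea is simply to take $\psi$ to be a character. Round $-Dk$ coordinatewise to $l\in\Z^r$, so that $|l+Dk|^2\le r/4$. Since $e^{il\cdot Y_2}=e^{il\cdot X_2}$ for integer $l$,
\[
 |m_{(k,l)}(P)|=\exp\Bigl(-\tfrac12\bigl[\ell(S)+(l+Dk)^\top B(l+Dk)\bigr]\Bigr)
 \ge e^{-\ell(S)/2-r\|B\|_{\mathrm{op}}/8}.
\]
Meanwhile $m_{(k,l)}(E(\beta,B))=0$ because $k\neq0$. Then \eqref{eq:momentlower} gives the lower bound, with prefactor one when $r=0$. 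This is exactly the paper's argument.

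The rest of your proposal is sound. For the upper bound you take a different, somewhat cleaner route than the paper. The paper sums over the residual frequencies $l$, bounds $\|F_t-f_{\beta,B}\|_\infty$ and $\|\dot F_t\|_\infty$ using $K_B$, and uses the floor $F_t\ge c_B/2$. Your data-processing step from $(Y_1,X_2)$ to $Y$ instead reduces the Fisher speed along the ray to the information of $\mathrm{WN}_m(0,tS)$ alone. This yields a threshold and an upper constant depending only on $m$, so they are uniform in $B$ as well as in $D,\alpha,\beta$, which is slightly stronger than stated.

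Your connector argument for identifying the endpoint is the paper's, and your conditioning simplifies its estimates. One small correction: with $D\in\R^{r\times m}$, the conditional mean of $X_1$ is $\alpha+D^\top(X_2-\beta)$, not $DX_2$.
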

\begin{proof}
At frequency $(k,l)\in\Z^m\times\Z^r$, the covariance quadratic form along the curve is
$t k^\top Sk+(l+Dk)^\top B(l+Dk)$. If $F_t$ is its density, summation first over $l$ gives
\begin{align*}
 \|F_t-f_{\beta,B}\|_\infty&\le K_B\Theta_m(tS),\\
 \|\dot F_t\|_\infty&\le\frac{K_B}{2}
       \sum_{k\ne0}(k^\top Sk)e^{-t k^\top Sk/2}.
\end{align*}
If $K_B\Theta_m(S)\le c_B/2$, then $F_t\ge c_B/2$ for $t\ge1$, so its Fisher speed is at most $\sqrt{2/c_B}\|\dot F_t\|_2$, and hence at most $\sqrt{2/c_B}\|\dot F_t\|_\infty$. Tonelli's theorem yields
\begin{equation}\label{eq:tailpathlength}
 \operatorname{Length}(F_t:1\le t<\infty)
 \le\sqrt{2/c_B}K_B\Theta_m(S).
\end{equation}
For arbitrary $S>0$, the same bound applies after a finite initial segment. Thus every such curve is Cauchy at infinity. Taking $S=I_m$, $D=0$, and $\alpha=0$ constructs $E(\beta,B)$.

We identify the other endpoints metrically before using their densities. Fix $S,D,\alpha$ and a real lift of $\alpha$. At each large $t$, replace $D$ and $\alpha$ by $D_u=(1-u)D$ and $\alpha_u=(1-u)\alpha$, $0\le u\le1$, leaving $tS,B,\beta$ fixed in \eqref{eq:schurparam}. The matrices remain positive definite. The same Fourier bound gives density at least $c_B/2$ uniformly in $u$. With
\[
 K_B^{(1)}=\sup_x\sum_l|l+x|e^{-(l+x)^\top B(l+x)/2}<\infty,
\]
the density derivative along the connector is bounded by
\[
 \bigl(|\alpha|K_B+\|BD\|_{\mathrm{op}}K_B^{(1)}\bigr)
 \sum_{k\ne0}|k|e^{-t k^\top Sk/2}.
\]
For fixed $S>0$ this tends to zero by Gaussian domination. Therefore the connector lengths tend to zero. Their final points have covariance $\diag(tS,B)$ and mean $(0,\beta)$.

Next connect these points to $\diag(tI_m,B)$ by replacing $S$ with $S_u=(1-u)S+uI_m$. Put $c=\min(\lambda_{\min}(S),1)>0$. These densities also have a common positive lower bound for large $t$, and their derivative is bounded by
\[
 \frac t2\|I_m-S\|_{\mathrm{op}}K_B
       \sum_{k\ne0}|k|^2e^{-tc|k|^2/2}\longrightarrow0.
\]
For example, absorbing $t$ into half the exponential gives a fixed summable Gaussian majorant. The connector lengths again tend to zero. Thus the original curve and the defining curve of $E(\beta,B)$ have the same completion endpoint. These connector arguments concern fixed $S,D,\alpha$; uniformity in them is not required for endpoint identification. The bound \eqref{eq:tailpathlength}, now a distance bound to that endpoint, is independent of $D$ and $\alpha$. Lemma~\ref{lem:latticetheta} proves the upper estimate in \eqref{eq:partialdistance}.

For the lower estimate, choose an integer $k\ne0$ minimizing $k^\top Sk$ and round $-Dk$ coordinatewise to an integer $l$. Then
$(l+Dk)^\top B(l+Dk)\le r\|B\|_{\mathrm{op}}/4$.
The endpoint has zero $(k,l)$ moment, whereas the magnitude of the moment at $P$ is at least $e^{-\ell(S)/2-r\|B\|_{\mathrm{op}}/8}$. Apply \eqref{eq:momentlower}. The uniformity in $B$ follows from the compact bounds for $K_B,c_B^{-1}$.
\end{proof}

\begin{proposition}[Sharp asymptotics on fixed conditional-covariance rays]\label{prop:sharpendpoint}
Fix $S\in\mathcal P_m$, $B\in\mathcal P_r$, $D\in\R^{r\times m}$,
and means $(\alpha,\beta)$ in \eqref{eq:schurparam}; for $r=0$ omit
the residual variables. Let $P_t$ be obtained by replacing $S$ with $tS$,
and let $E=E(\beta,B)$. Put
\[
 \ell=\ell(S),\qquad
 \mathcal Z=\{k\in\Z^m:k^\top Sk=\ell\},\qquad
 \ell_2=\ell_2(S)=\min\{k^\top Sk:k\in\Z^m,\ k^\top Sk>\ell\}.
\]
Both minima exist and $0<\ell<\ell_2$. With normalized Haar measure
on $\T^m\times\T^r$, define the real analytic function
\begin{equation}\label{eq:shortestshell}
 \Psi(x,y)=\sum_{k\in\mathcal Z}\sum_{l\in\Z^r}
 e^{-(l+Dk)^\top B(l+Dk)/2}
 e^{i\{k^\top(\alpha-x)+l^\top(\beta-y)\}},
 \qquad
 \mathcal A=\left(\int\frac{\Psi^2}{f_{\beta,B}}\dd\nu\right)^{1/2}>0.
\end{equation}
For $r=0$ the inner sum consists of its single empty-index term and
$f_{\beta,B}=1$. Set $\varrho=\frac12\min\{\ell,\ell_2-\ell\}$.
Then, as $t\to\infty$,
\begin{equation}\label{eq:sharpdistance}
 d_{\mathrm W}(P_t,E)
 =\mathcal A e^{-t\ell/2}\bigl(1+O(e^{-\varrho t})\bigr).
\end{equation}
The constants in this statement concern the fixed ray. In particular,
for covariance $tS$ in dimension $d$ and any fixed mean,
\begin{equation}\label{eq:kissingdistance}
 d_{\mathrm W}((\mu,tS),U_d)
 =\sqrt{N(S)}e^{-t\ell(S)/2}
    \bigl(1+O(e^{-\varrho t})\bigr),\qquad
 N(S)=\#\{n\in\Z^d:n^\top Sn=\ell(S)\}.
\end{equation}
\end{proposition}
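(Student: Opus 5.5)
Both directions come from the same principle. Near $E$ the Fisher metric should look like the flat $\chi^2$ metric $\int (\delta F)^2/f_{\beta,B}\dd\nu$. The displacement $F_{P_t}-f_{\beta,B}$ is dominated by the shortest shell, so to leading order it equals $e^{-t\ell/2}\Psi$. Put $G_t=F_{P_t}-f_{\beta,B}$. By the Fourier expansion in the proof of Proposition~\ref{prop:endpointdistance}, the frequencies $(k,l)$ with $k\in\mathcal Z$ give exactly $e^{-t\ell/2}\Psi$. The frequencies with $k\ne0$, $k\notin\mathcal Z$, have $k^\top Sk\ge\ell_2$. Summing over $l$ with $K_B$ and applying Lemma~\ref{lem:latticetheta} to the shifted lattice sums gives
\[
 \|G_t-e^{-t\ell/2}\Psi\|_\infty\le C e^{-t\ell_2/2}\le Ce^{-t\ell/2}e^{-2\varrho t}.
\]
The same shell splitting applies to $\dot F_t$. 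The extra factor $k^\top Sk$ only costs polynomial weights, which we absorb by halving part of the gap. Positivity $\mathcal A>0$ holds because $\Psi\not\equiv0$: its Fourier coefficients are nonzero, and distinct pairs $(k,l)$ give distinct frequencies.

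\emph{Upper bound.} We use the tail of the ray itself. For $s\ge t$ the density satisfies $F_s\ge f_{\beta,B}-O(e^{-s\ell/2})$. Hence the Fisher speed is
\[
 \Bigl(\int \dot F_s^2/F_s\Bigr)^{1/2}=\frac{\ell}{2}e^{-s\ell/2}\Bigl(\int\Psi^2/f_{\beta,B}\Bigr)^{1/2}\bigl(1+O(e^{-\varrho s})\bigr),
\]
since $\dot F_s=-\tfrac{\ell}{2}e^{-s\ell/2}\Psi+O(e^{-s\ell_2/2}\cdot\mathrm{poly})$. Integrating over $s\in[t,\infty)$ gives length $\mathcal Ae^{-t\ell/2}(1+O(e^{-\varrho t}))$. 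By Proposition~\ref{prop:endpointdistance} this ray ends at $E$, so the length bounds $d_{\mathrm W}(P_t,E)$ from above.

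\emph{Lower bound.} This is the main obstacle, because competing paths may leave any neighborhood in parameter space. I would use a $1$-Lipschitz test functional in place of a moment, which sharpens Lemma~\ref{lem:observables}. For a bounded $\varphi$, the map $P\mapsto\E_P\varphi$ has derivative bounded by $\sqrt{\Var_P\varphi}\sqrt{g_{\mathrm W}}$, since scores are centred. Take $\varphi=\Psi/(\mathcal A f_{\beta,B})$, which is bounded, real analytic, and of mean zero under $f_{\beta,B}$; its variance under $f_{\beta,B}$ is $1$. Along any curve from $P_t$ to $E$ of length at most $2\mathcal Ae^{-t\ell/2}$, every law $F$ stays within $L^1$-distance $O(e^{-t\ell/2})$ of $f_{\beta,B}$ by \eqref{eq:momentlower}. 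Since $\varphi$ is bounded, $\Var_F\varphi=1+O(e^{-t\ell/2})$ along the curve. Completed points are limits of finite-parameter laws, so the estimate extends to the completion. Hence
\[
 d_{\mathrm W}(P_t,E)\ge\frac{|\E_{P_t}\varphi-\E_E\varphi|}{1+O(e^{-t\ell/2})}.
\]
The numerator equals $\int G_t\Psi/(\mathcal Af_{\beta,B})\dd\nu=e^{-t\ell/2}\mathcal A+O(e^{-t\ell_2/2})$. This matches the upper bound, with $\varrho\le\ell/2$ covering the variance correction. Curves longer than $2\mathcal Ae^{-t\ell/2}$ are irrelevant, by the upper bound.

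\emph{Specialization.} For \eqref{eq:kissingdistance} take $r=0$, $m=d$ and $f=1$. Then $\Psi(x)=\sum_{k\in\mathcal Z}e^{ik^\top(\alpha-x)}$ is a sum of distinct characters, so $\mathcal A^2=\#\mathcal Z=N(S)$.
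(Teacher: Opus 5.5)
Your proposal is correct. The shell expansion and the upper bound are the same as in the paper: you integrate the Fisher speed along the tail of the ray, whose endpoint is $E$ by Proposition~\ref{prop:endpointdistance}. Your lower bound takes a different route.

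The paper uses the square-root embedding $F\mapsto2\sqrt F$, which sends Fisher curves to $L^2(\nu)$ curves of the same speed. So the chord $2\|\sqrt{F_t}-\sqrt{f_{\beta,B}}\|_2$ is a lower bound for the distance to $E$, with no localization at all. It passes to completion points because $\|\sqrt F-\sqrt G\|_2^2\le\|F-G\|_1$. The constant $\mathcal A$ then comes from expanding the square root.

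You instead sharpen Lemma~\ref{lem:observables}, replacing $\|\varphi\|_\infty$ by $\sqrt{\Var_P\varphi}$; this is legitimate because scores are centered. You then test against $\varphi=\Psi/(\mathcal A f_{\beta,B})$, which is the norming functional for the linearized, $\chi^2$ form of that same chord. This avoids the nonlinear square-root expansion. The cost is that it needs the upper bound first, to confine nearly minimizing finite-parameter curves to within $L^1$-distance $O(e^{-t\ell/2})$ of $f_{\beta,B}$. On that neighborhood $\Var\varphi=1+O(e^{-t\ell/2})$, and the correction is absorbed because $\varrho\le\ell/2$. You also correctly pass to the limit along finite points $Q\to E$.

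Two small repairs are needed.
\begin{enumerate}
\item Since $(\ell_2-\ell)/2\ge\varrho$ but not $\ge2\varrho$ in general, the remainder is $Ce^{-t\ell/2}e^{-\varrho t}$ rather than $Ce^{-t\ell/2}e^{-2\varrho t}$. This still suffices.
\item The tail over $k^\top Sk\ge\ell_2$ is not literally Lemma~\ref{lem:latticetheta}. Factor out $e^{-t\ell_2/2}$ for $t\ge1$. After that, the $t$-independent weight $k^\top Sk$ in $\dot F_t$ costs nothing. By contrast, giving up part of the gap $\ell_2-\ell$ itself, as your ``halving'' remark suggests, would spoil the stated rate $O(e^{-\varrho t})$.
\end{enumerate}
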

\begin{proof}
Pairing $(k,l)$ with $(-k,-l)$ makes $\Psi$ real. Its series is normally
convergent, since $\mathcal Z$ is finite and the remaining sum has Gaussian
decay. A Fourier coefficient with $k\in\mathcal Z$ is nonzero, so $\Psi$ is
not identically zero and $\mathcal A>0$.

Writing $f=f_{\beta,B}$ and $a_t=e^{-t\ell/2}$, split the Fourier
series of $F_t$ into $k=0$, $k\in\mathcal Z$, and all remaining $k$.
The Gaussian bound used in Proposition~\ref{prop:endpointdistance} gives,
in the uniform norm,
\begin{equation}\label{eq:shellexpansions}
 F_t=f+a_t\Psi+O(e^{-t\ell_2/2}),\qquad
 \dot F_t=-\frac\ell2 a_t\Psi+O(e^{-t\ell_2/2}).
\end{equation}
For completeness, after factoring out $e^{-t\ell_2/2}$, the remainders
for $t\ge1$ are bounded by constant multiples of
\[
 \sum_{k:k^\top Sk\ge\ell_2}
 (1+k^\top Sk)e^{-(k^\top Sk-\ell_2)/2}<\infty.
\]
Summation over $l$ is uniformly bounded by $K_B$, including when the
shift $Dk$ varies with $k$. This also justifies the differentiated expansion.

Because $\min f>0$, equation~\eqref{eq:shellexpansions} and the smoothness
of the square-root and reciprocal functions on a positive interval imply
\[
 2\|\sqrt{F_t}-\sqrt f\|_2
 =\mathcal A a_t\bigl(1+O(e^{-\varrho t})\bigr),\qquad
 \left(\int\frac{\dot F_t^2}{F_t}\dd\nu\right)^{1/2}
 =\frac\ell2 \mathcal A a_t\bigl(1+O(e^{-\varrho t})\bigr).
\]
The relative errors come from the quadratic term of the square root,
of size $O(a_t)=O(e^{-t\ell/2})$, and from the next shell, of size
$O(e^{-t(\ell_2-\ell)/2})$.
The map $F\mapsto2\sqrt F$ takes every Fisher curve to an $L^2$ curve
of the same speed. Thus its chord length is a lower bound for intrinsic
Fisher distance. This bound extends to completion points: the inequality
$\|\sqrt F-\sqrt G\|_2^2\le\|F-G\|_1$ makes the square-root map
continuous there. The first displayed expansion is therefore a lower
bound for $d_{\mathrm W}(P_t,E)$. Integrating the second expansion from
$t$ to infinity gives the matching upper bound, since
Proposition~\ref{prop:endpointdistance} identifies the endpoint of this
ray with $E$. In the case $r=0$, orthogonality of distinct Fourier
characters gives $\int\Psi^2\dd\nu=\#\mathcal Z=N(S)$.
\end{proof}

The square-root representation is the usual ambient Fisher--Rao
representation; see \cite{BruverisMichor2019}. Here its chord only supplies
a lower bound, while the matching upper bound is produced inside the
wrapped family. Consequently \eqref{eq:sharpdistance} determines the
intrinsic leading constant without asserting that an ambient geodesic
lies in the family. For $S=I_d$, the constant in
\eqref{eq:kissingdistance} is $\sqrt{2d}$; for
$S=\left(\begin{smallmatrix}1&1/2\\1/2&1\end{smallmatrix}\right)$ it is
$\sqrt6$.

For each primitive $L$, choose one adapted integer basis to construct these endpoints. This is legitimate because a fixed integer change of observations preserves Fisher lengths in both directions. Independence of the chosen basis will follow from injectivity of the completed density map, rather than from density convergence alone.

\subsection{Classification and convergence}
\begin{theorem}[Full completion and convergence criterion]\label{thm:completionclassification}
The density map is a homeomorphism from $\widehat{\mathcal M}_{d,\mathrm W}$ onto $\mathcal C_d$. A finite-parameter sequence converges to the point $(L,\chi,Q)$ if and only if, in any fixed adapted coordinates \eqref{eq:adaptedblocks},
\begin{equation}\label{eq:completioncriterion}
 \beta_j\to\beta\text{ in }\T^r,\qquad B_j\to B>0,
 \qquad \ell(S_j)\to\infty.
\end{equation}
For $r=0$ only the last condition remains; for $r=d$ it is omitted. Equivalently, the characters and quadratic forms restricted to $L$ converge to $\chi,Q$, and
\begin{equation}\label{eq:outsidecriterion}
 \min_{n\in\Z^d\setminus L}n^\top\Sigma_j n\longrightarrow\infty
\end{equation}
when $L\ne\Z^d$. The character and form convergence can be checked on a basis and its pairwise sums. There is no additional restriction on $\alpha_j$ or $C_j$.
\end{theorem}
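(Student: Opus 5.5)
The plan is to prove Theorem~\ref{thm:completionclassification} in four steps: surjectivity of the extended density map, the convergence criterion \eqref{eq:completioncriterion}, injectivity, and finally the homeomorphism property.

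\textbf{Surjectivity.} By Lemma~\ref{lem:observables} the density map $\Phi$ extends continuously to $\widehat{\mathcal M}_{d,\mathrm W}$, and its image lies in the $L^1$ closure, which is $\mathcal C_d$ by Lemma~\ref{lem:densityclosure}. Every $(L,\chi,Q)$ with $r<d$ is the density of an endpoint $E(\beta,B)$ from Proposition~\ref{prop:endpointdistance}, constructed in a fixed adapted basis. Points with $r=d$ are finite parameters.

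\textbf{Sufficiency in \eqref{eq:completioncriterion}.} Take a sequence with $\beta_j\to\beta$, $B_j\to B>0$ and $\ell(S_j)\to\infty$. Write each $P_j$ in the Schur form \eqref{eq:schurparam}, with $D_j=B_j^{-1}C_j^\top$, and apply the upper bound of \eqref{eq:partialdistance} with $B_j$ in place of $B$. This bound is uniform for $B_j$ in a compact neighborhood of $B$ and independent of $D_j$ and $\alpha_j$, so
\[
 d_{\mathrm W}(P_j,E(\beta_j,B_j))\le Ce^{-\ell(S_j)/2}\to0.
\]
It remains to show $E(\beta_j,B_j)\to E(\beta,B)$ in the completion. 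For this I would connect the finite-parameter laws $(0,\beta_j)$, $\diag(tI_m,B_j)$ and $(0,\beta)$, $\diag(tI_m,B)$ for each fixed $t$. Their distance is at most the $\mathcal M_m\times\mathcal M_r$ product distance, which is bounded by the residual $r$-dimensional wrapped distance: a product path holding the uniform-ish first factor fixed has length equal to the length of its second factor. That residual distance tends to $0$ because the Fisher metric on $\mathcal M_r$ is continuous. Letting $t\to\infty$ bounds $d_{\mathrm W}(E(\beta_j,B_j),E(\beta,B))$ by $d_{\mathrm W,r}((\beta_j,B_j),(\beta,B))\to0$.

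\textbf{Necessity and the equivalent form.} If $P_j$ converges to a completion point $x$ with $\Phi(x)=F_{L,\chi,Q}$, then $F_{P_j}\to\Phi(x)$ in $L^1$, so moments converge. This gives $Q_j(n)\to Q(n)$ and phases converging on $L$, hence $B_j\to B$ and $\beta_j\to\beta$. It also gives $n^\top\Sigma_jn\to\infty$ for each fixed $n\notin L$. The main obstacle is upgrading this pointwise divergence to the uniform statements $\ell(S_j)\to\infty$ and \eqref{eq:outsidecriterion}. I would argue by contradiction. Suppose $k_j\in\Z^m\setminus\{0\}$ has $k_j^\top S_jk_j\le M$. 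As in the lower bound of Proposition~\ref{prop:endpointdistance}, choose an integer $l_j$ rounding $-D_jk_j$, so that $n_j=(k_j,l_j)\notin L$ satisfies $|m_{n_j}(P_j)|\ge e^{-M/2-r\|B_j\|/8}\ge c>0$. The frequencies $n_j$ may escape to infinity, so pointwise moment convergence alone does not rule this out. Instead I would use $L^1$ convergence: the moments $m_{n_j}(\Phi(x))$ vanish, and
\[
 |m_{n_j}(P_j)-m_{n_j}(\Phi(x))|\le\|F_{P_j}-\Phi(x)\|_1\to0
\]
uniformly in the frequency. This contradicts $|m_{n_j}(P_j)|\ge c$. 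The same argument, with $n^\top\Sigma_jn=k^\top S_jk+(l+Dk)^\top B_j(l+Dk)\ge k^\top S_jk$ for $n=(k,l)$, gives the equivalence of $\ell(S_j)\to\infty$ with \eqref{eq:outsidecriterion}. Checking convergence on a basis and its pairwise sums suffices by polarization.

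\textbf{Injectivity and homeomorphism.} If $\Phi(x)=\Phi(y)$, choose finite-parameter sequences $P_j\to x$ and $P'_j\to y$. By necessity, both satisfy \eqref{eq:completioncriterion} for the same triple. By the sufficiency argument, both converge to the same endpoint $E(\beta,B)$, so $x=y$. This also shows that the endpoint is independent of the chosen adapted basis. For continuity of $\Phi^{-1}$, let $\Phi(x_j)\to\Phi(x)$ in $L^1$. Approximate each $x_j$ by a finite parameter $P_j$ with $d_{\mathrm W}(P_j,x_j)<1/j$, so that $F_{P_j}\to\Phi(x)$ as well. Necessity then gives \eqref{eq:completioncriterion}, sufficiency gives $P_j\to x$, and hence $x_j\to x$.
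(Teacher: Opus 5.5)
Your proposal is correct and follows essentially the paper's proof. The ingredients are the same. The uniform-in-frequency moment bound coming from $L^1$ convergence, combined with rounding $-D_jk$, forces $\ell(S_j)\to\infty$. Proposition~\ref{prop:endpointdistance}'s upper bound, independent of $D,\alpha$, together with the residual lift \eqref{eq:residuallift}, gives intrinsic convergence to $E(\beta,B)$. Bijectivity and inverse continuity then follow by approximating completion points with finite parameters. The only difference is ordering: the paper first shows that every $L^1$-convergent finite-parameter sequence converges intrinsically to the canonical point and reads off bijectivity, while you separate necessity and sufficiency first, which is immaterial.
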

\begin{proof}
Suppose first that $F_{P_j}\to F\in\mathcal C_d$ in $L^1$, and fix adapted coordinates for $F$. Lemma~\ref{lem:densityclosure} gives $B_j\to B>0$ and $\beta_j\to\beta$. Put $\delta_j=\|F_{P_j}-F\|_1$. Every frequency $(k,l)$ with $k\ne0$ has zero limiting moment, so, uniformly over these frequencies,
\[
 \exp\!\left[-\tfrac12\{k^\top S_jk+(l+D_jk)^\top B_j(l+D_jk)\}\right]
 \le\delta_j,\qquad D_j=B_j^{-1}C_j^\top.
\]
Rounding $-D_jk$ to an integer $l$ shows
\begin{equation}\label{eq:roundingcriterion}
 \ell(S_j)\ge-2\log\delta_j-\frac r4\|B_j\|_{\mathrm{op}}\longrightarrow\infty.
\end{equation}
This uses the uniform moment bound supplied by $L^1$ convergence, not just convergence at each fixed frequency. Proposition~\ref{prop:endpointdistance} gives
$d_{\mathrm W}(P_j,E(\beta_j,B_j))\to0$.

For residual parameters, paths in $\mathcal M_r$ can be lifted by adjoining an independent wrapped factor of covariance $tI_m$. Independence makes the lifted speed equal to the residual speed. Passing to endpoints gives
\begin{equation}\label{eq:residuallift}
 d_{\mathrm W}(E(\beta_j,B_j),E(\beta,B))
 \le d_{\mathrm W,r}((\beta_j,B_j),(\beta,B))\longrightarrow0.
\end{equation}
Here $d_{\mathrm W,r}$ is the intrinsic wrapped distance in dimension $r$; its local convergence is ordinary parameter convergence by the smooth positive metric. For $r=0$ there is no residual term. For $r=d$, the finite moment inverse gives ordinary parameter convergence directly. Thus every $L^1$-convergent sequence of finite parameters converges intrinsically to the specified canonical point.

Every Fisher-Cauchy sequence has an $L^1$ density limit by Lemma~\ref{lem:observables}. The preceding argument identifies its metric endpoint uniquely. Conversely, Lemma~\ref{lem:densityclosure} and Proposition~\ref{prop:endpointdistance} construct an endpoint for every element of $\mathcal C_d$. The extended density map is therefore bijective. In particular, constructions in different adapted bases agree. Its continuity follows from \eqref{eq:momentlower}. For inverse continuity, suppose densities of completion points $x_j$ converge in $L^1$ to $F$. Choose finite $P_j$ with $d_{\mathrm W}(P_j,x_j)<1/j$. Their densities also tend to $F$, so the preceding argument and the triangle inequality give $x_j\to x_F$. Sequential continuity suffices since both spaces are metrizable.

The conditions in \eqref{eq:completioncriterion} imply intrinsic convergence by the same tail and residual estimates. Their necessity follows from intrinsic convergence, \eqref{eq:momentlower}, and \eqref{eq:roundingcriterion}. Finally, if $a_j$ denotes the minimum in \eqref{eq:outsidecriterion}, completing the square and rounding give
\[
 \ell(S_j)\le a_j\le\ell(S_j)+\frac r4\|B_j\|_{\mathrm{op}}.
\]
Since $B_j$ converge, the two divergence conditions are equivalent. Polarization and the nonzero surviving moments give the stated intrinsic description of the other conditions.
\end{proof}

\begin{corollary}[Local distance comparison with a fixed limit]\label{cor:anchoredcomparison}
For each completion point $E$, there are $\delta_E>0$ and $C_E<\infty$ such that
\begin{equation}\label{eq:anchoredcomparison}
 \|F_P-F_E\|_1\le d_{\mathrm W}(P,E)
 \le C_E\|F_P-F_E\|_1
 \quad\text{if }\|F_P-F_E\|_1<\delta_E.
\end{equation}
The point $P$ may belong to the completion. The constants concern the fixed endpoint $E$.
\end{corollary}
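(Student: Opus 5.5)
The left inequality is already \eqref{eq:momentlower}, extended to completion points through the continuous extension of the density map in Lemma~\ref{lem:observables}. The work is the right inequality. Fix $E=(L,\chi,Q)$ with $r=\operatorname{rank}L$, and fix adapted coordinates, so that $F_E=f_{\beta,B}$ is constant in the first $m=d-r$ variables. Since finite points are dense in the completion and both sides are continuous in $P$ (Lemma~\ref{lem:observables}), it suffices to treat finite $P$ with $\|F_P-F_E\|_1<\delta_E/2$. Write $P$ in the Schur form \eqref{eq:schurparam} with data $(\alpha',\beta',S,D,B')$ and put $\delta=\|F_P-F_E\|_1$. I will bound
\[
 d_{\mathrm W}(P,E)\le d_{\mathrm W}\bigl(P,E(\beta',B')\bigr)+d_{\mathrm W}\bigl(E(\beta',B'),E(\beta,B)\bigr)
\]
and show that each term is $O(\delta)$.

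For the first term, I reuse the rounding argument of Theorem~\ref{thm:completionclassification}. Every frequency $(k,l)$ with $k\neq0$ has zero moment under $E$, so $|m_{(k,l)}(P)|\le\delta$. The moments of $P$ at the residual frequencies are within $\delta$ of those of $E$, so polarization controls $B'$. Since $m_n(E)$ is bounded away from zero on the finitely many frequencies used, small $\delta_E$ gives $|B'-B|+|\beta'-\beta|\le C\delta$, and $B'$ stays in a compact neighbourhood $\mathcal K$ of $B$. Then \eqref{eq:roundingcriterion} gives
\[
 e^{-\ell(S)/2}\le\delta\, e^{r\|B'\|_{\mathrm{op}}/8}\le C\delta .
\]
By taking $\delta_E$ small, $\ell(S)$ exceeds the uniform threshold in Proposition~\ref{prop:endpointdistance} for $B'\in\mathcal K$. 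That proposition gives $d_{\mathrm W}(P,E(\beta',B'))\le C_{\mathcal K,m}e^{-\ell(S)/2}\le C\delta$, uniformly in $D$ and $\alpha'$. This uniformity over the cross-covariance is exactly what the proposition supplies.

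For the second term, \eqref{eq:residuallift} bounds it by $d_{\mathrm W,r}((\beta',B'),(\beta,B))$. This is at most a constant times the parameter distance, because $g_{\mathrm W}$ on $\mathcal M_r$ is a smooth positive metric (Proposition~\ref{prop:fullregular}). The parameter distance is at most $C\delta$ by the moment inversion above.

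The edge cases are straightforward. For $r=0$ only the first step is needed. For $r=d$, $E$ is a finite point, and the statement follows from local bi-Lipschitz equivalence of the moment chart with the Riemannian distance.

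The main obstacle I expect is the Lipschitz moment inversion. One must verify that $(\beta',B')\mapsto$ (the residual moments on a basis and its pairwise sums) has a locally Lipschitz inverse near $(\beta,B)$. This needs moments bounded below near $E$, which holds because $e^{-Q(n)/2}>0$ there. The point of this step is to make the constant depend only on $E$, and not on $D$ or $\alpha'$.
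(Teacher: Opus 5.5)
Your proposal is correct and follows the paper's own proof essentially step for step. Both arguments use the triangle inequality through $E(\beta',B')$, then feed the rounding bound \eqref{eq:roundingcriterion} into the upper estimate of Proposition~\ref{prop:endpointdistance}, which is uniform in $D$ and $\alpha'$, and then use the residual lift \eqref{eq:residuallift} together with the Lipschitz local moment inverse. Both finish with the same edge cases $r=0$ and $r=d$ and the same extension to completion points by approximating with finite points.
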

\begin{proof}
First let $P$ be finite. In adapted coordinates for $E$, its finitely many residual moments stay nonzero when the $L^1$ difference $\delta$ is small. The local inverse using their phases and logarithmic magnitudes bounds the residual parameter difference, and hence its local Fisher distance, by a constant times $\delta$. The residual covariances remain in a fixed compact subset of $\mathcal P_r$. Equation~\eqref{eq:roundingcriterion} gives $e^{-\ell(S_P)/2}\le e^{r\|B_P\|_{\mathrm{op}}/8}\delta$. Combine \eqref{eq:partialdistance} and \eqref{eq:residuallift}. For $r=0$ omit the residual argument; for $r=d$ use only the local parameter inverse. The lower bound is \eqref{eq:momentlower}. Approximating an arbitrary completion point $P$ by finite points proves the extension, after decreasing $\delta_E$ if necessary.
\end{proof}

For the uniform point $U$, the criterion and estimates simplify to
\begin{equation}\label{eq:uniformdistance}
 \begin{gathered}
 P_j\to U\ \Longleftrightarrow\ \ell(\Sigma_j)\to\infty,\\
 e^{-\ell(\Sigma)/2}\le d_{\mathrm W}((\mu,\Sigma),U)
       \le C_de^{-\ell(\Sigma)/2}
 \qquad(\ell(\Sigma)\ge t_d),
 \end{gathered}
\end{equation}
for dimension-dependent $t_d,C_d$. In particular, the lower bound on $\lambda_{\min}(\Sigma)$ in the usual uniform approximation can be replaced by the weaker lattice condition. The completion has exactly one point with uniform density.

\subsection{The pieces of the added set}\label{subsec:pieces}
For a primitive subgroup $L$, let $\mathcal S_L$ be the set of completion points with that precise surviving subgroup. Write $\mathcal M_0=\{U\}$ for the zero-dimensional model.
\begin{proposition}[Isometric pieces and their closures]\label{prop:pieces}
If $\operatorname{rank}L=r$, then $\mathcal S_L$, with distance inherited from the full completion, is isometric to the finite wrapped family $\mathcal M_r$ with its intrinsic Fisher distance. Its closure is isometric to $\widehat{\mathcal M}_{r,\mathrm W}$, and
\begin{equation}\label{eq:piececlosures}
 \overline{\mathcal S_L}=\bigcup_{L'\le L\ \mathrm{primitive}}\mathcal S_{L'},
 \qquad
 \overline{\mathcal S_L}\cap\overline{\mathcal S_\Gamma}
 =\overline{\mathcal S_{L\cap\Gamma}}.
\end{equation}
Primitivity in the union may equivalently be taken in $L$ or in $\Z^d$.
\end{proposition}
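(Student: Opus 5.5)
Fix adapted integer coordinates in which $L$ is spanned by the last $r$ basis vectors, so that by Lemma~\ref{lem:densityclosure} and Theorem~\ref{thm:completionclassification} the points of $\mathcal S_L$ are exactly the endpoints $E(\beta,B)$ with $(\beta,B)\in\mathcal M_r$. Define $\iota:\mathcal M_r\to\mathcal S_L$ by $\iota(\beta,B)=E(\beta,B)$; this is a bijection by injectivity of the completed density map. Since a fixed integer change of observation coordinates preserves Fisher lengths in both directions, it suffices to prove that $\iota$ is an isometry in these coordinates. The inequality $d_{\mathrm W}(\iota P,\iota P')\le d_{\mathrm W,r}(P,P')$ is exactly \eqref{eq:residuallift}: lift a residual path by adjoining an independent factor of covariance $tI_m$, which has the same speed, and let $t\to\infty$.

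For the reverse inequality I would use the marginal projection $\pi:\T^d\to\T^r$ onto the last $r$ coordinates. For a finite parameter $P=(\mu,\Sigma)$, the residual marginal of $F_P$ is the wrapped law $\mathrm{WN}_r(\beta,B)$ with $(\beta,B)$ read off from the blocks \eqref{eq:adaptedblocks}. The map $\Pi:P\mapsto(\beta,B)$ is therefore a smooth map $\mathcal M_d\to\mathcal M_r$ that is affine in the parameters. By the data-processing (sufficiency) inequality for Fisher information under a measurable map of observations, the score of the marginal is the conditional expectation of the full score. Hence $g_{\mathrm W,r}(d\Pi V,d\Pi V)\le g_{\mathrm W}(V,V)$, and $\Pi$ is $1$-Lipschitz from $d_{\mathrm W}$ to $d_{\mathrm W,r}$.

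I would extend $\Pi$ to the completion by uniform continuity; completeness of $(\mathcal M_r,d_{\mathrm W,r})$ is not needed on the target side, since we only evaluate at points of $\mathcal S_L$. To evaluate $\Pi$ there, note that along the defining curve of $E(\beta,B)$ the projection is the constant $(\beta,B)$, so $\Pi(E(\beta,B))=(\beta,B)$. This gives $d_{\mathrm W,r}\le d_{\mathrm W}\circ\iota$, and the extension carries values in the completion $\widehat{\mathcal M}_{r,\mathrm W}$. The same two maps give the closure statement. The lift $\iota$ is $1$-Lipschitz on $\mathcal M_r$, so it extends to $\widehat{\mathcal M}_{r,\mathrm W}\to\overline{\mathcal S_L}$. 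The extended $\Pi$ is a left inverse on $\iota(\widehat{\mathcal M}_{r,\mathrm W})$, so the extension of $\iota$ is isometric with closed image. That image equals $\overline{\mathcal S_L}$ because $\mathcal S_L=\iota(\mathcal M_r)$ is dense in it.

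For \eqref{eq:piececlosures}, I would identify the extension of $\iota$ through densities. It sends the point $(L_0,\chi_0,Q_0)$ of $\widehat{\mathcal M}_{r,\mathrm W}$, with $L_0\le\Z^r$ primitive, to the law $U_m\otimes F_{L_0,\chi_0,Q_0}$. This law has surviving subgroup $L_0$ viewed inside $L\le\Z^d$, and the identification follows from continuity of the density maps in Lemma~\ref{lem:observables}. Hence the image is exactly $\bigcup_{L'\le L}\mathcal S_{L'}$. A subgroup of $L$ is primitive in $L$ if and only if it is primitive in $\Z^d$, since $L$ is itself primitive. The intersection formula then follows: $\overline{\mathcal S_L}\cap\overline{\mathcal S_\Gamma}$ is the union over primitive $L'$ contained in both $L$ and $\Gamma$, i.e. $L'\le L\cap\Gamma$, and $L\cap\Gamma$ is primitive.

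The step I expect to be the main obstacle is justifying the information inequality for $\Pi$ carefully. Specifically, one must check that the marginal score really is the conditional expectation of the wrapped score, which requires differentiating under the integral with locally uniform domination. One must also check that $\Pi$ is well defined on the torus mean, since $\beta$ is taken modulo $2\pi\Z^r$ in the adapted coordinates.
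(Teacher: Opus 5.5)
Your proposal is correct and follows essentially the same route as the paper. Adjoining an independent uniformizing factor gives the upper bound \eqref{eq:residuallift}. Marginalization to the residual block (marginal score $=$ conditional expectation of the full score) gives a $1$-Lipschitz map that extends to the completion and sends $E(\beta,B)$ to $(\beta,B)$. The closure and the formulas in \eqref{eq:piececlosures} then follow from extending to completions, the classification in dimension $r$, and primitivity of $L$ and of $L\cap\Gamma$. Your left-inverse argument for the extended map is valid but more than needed, since the continuous extension of an isometry from $\mathcal M_r$ to its completion is automatically isometric.
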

\begin{proof}
In adapted coordinates, adjoining an independent uniformizing factor gives the upper distance bound \eqref{eq:residuallift}. Marginalizing to the last $r$ observations gives the opposite bound. Indeed, a marginal score is the conditional expectation of the full score, so marginalization decreases Fisher speed and is $1$-Lipschitz for the corresponding intrinsic distances. This map extends to completions and sends $E(\beta,B)$ to $(\beta,B)$. Hence the inclusion is isometric. Its extension from the completion of $\mathcal M_r$ has complete, therefore closed, image, equal to $\overline{\mathcal S_L}$. The classification in dimension $r$ identifies its pieces as the primitive subgroups of $L$. Since $L$ is primitive in $\Z^d$, those are precisely the ambient primitive subgroups contained in $L$. The intersection of two primitive subgroups is primitive, and uniqueness of the classification proves the second equality.
\end{proof}

For $d=2$, Example~\ref{ex:d2} describes the whole added set: $U$ together with one closed disk for each primitive $n\in\Z^2$ up to sign, each isometric to the completed circular family by Proposition~\ref{prop:pieces}, Theorem~\ref{thm:completionclassification} with $d=1$, and Lemma~\ref{lem:localcirculardisk}. The topology is the $L^1$ subspace topology of Theorem~\ref{thm:completionclassification}; the pieces are not asserted to form a locally finite stratification.

\subsection{Covariance divergence and convergence examples}\label{subsec:divergence}
Every sequence converging to a proper-rank point has $\|\Sigma_j\|_{\mathrm{op}}\to\infty$. In adapted coordinates, $A_j\succeq S_j$ and each diagonal entry of $S_j$ is at least $\ell(S_j)\to\infty$; a fixed change of coordinates preserves divergence of the norm. Divergence of the covariance norm alone is not a sufficient convergence condition.

\begin{proposition}[Positive-semidefinite covariance rays]\label{prop:psdrays}
Let $\Sigma_0>0$ and $H\succeq0$, $H\ne0$. For fixed $\mu$, the curve $(\mu,\Sigma_0+tH)$, $0\le t<\infty$, has finite wrapped Fisher length. Its endpoint is specified by
\[
 L=\ker H\cap\Z^d,\qquad Q(n)=n^\top\Sigma_0n\ (n\in L),
 \qquad\chi(n)=e^{in^\top\mu}.
\]
In particular all curves $(\mu,\Sigma_0+tI_d)$ tend to $U$.
\end{proposition}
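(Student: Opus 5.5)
The plan is to reduce to adapted coordinates for the primitive subgroup $L=\ker H\cap\Z^d$ and then invoke Proposition~\ref{prop:endpointdistance} directly. First I check that $L$ is primitive: if $kn\in\ker H$ with $k\ne0$, then $n\in\ker H$. Choose $T\in\mathrm{GL}(d,\Z)$ whose last $r$ rows form a basis of $L$. In the transformed coordinates, the frequencies in $L$ are exactly the last $r$ coordinate directions, and the transformed $H$ (again called $H$) is positive semidefinite and vanishes on the integer vectors $(0,l)$. Since these span $\R^r$ in the last block, $H$ annihilates the whole last-$r$ block, so
\[
 H=\begin{pmatrix}H_{11}&0\\0&0\end{pmatrix},\qquad H_{11}\succeq0.
\]
Now write $\Sigma_0$ in the block form \eqref{eq:schurparam}, with residual block $B$, coupling $D$ and Schur complement $S_0$. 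Along the ray only the upper-left block changes, so $B$ and $D$ stay fixed and the Schur complement is $S_t=S_0+tH_{11}$. The residual mean $\beta$ is fixed, and $\chi(n)=e^{in^\top\mu}$ and $Q(n)=n^\top\Sigma_0 n$ on $L$ are constant along the ray.

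The key point is that $\ker H_{11}\cap\Z^m=\{0\}$. Indeed, $H(k,0)=0$ would put $(k,0)$ in $L$, which forces $k=0$. However, $\ker H_{11}$ may be a nonzero real subspace containing no integer vectors, so $\ell(S_t)\to\infty$ is not automatic; this is the main obstacle. I handle it by compactness on shells. For $k\ne0$,
\[
 k^\top S_tk\ge\lambda_{\min}(S_0)|k|^2+t\,k^\top H_{11}k .
\]
Given $M$, take $R$ with $\lambda_{\min}(S_0)R^2>M$. The finitely many nonzero integer vectors with $|k|\le R$ have $k^\top H_{11}k>0$, because $H_{11}\succeq0$ and $k\notin\ker H_{11}$. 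Their minimum is some $\eta>0$, so $\ell(S_t)>M$ once $t\eta>M$. Hence $\ell(S_t)\to\infty$, and the convergence criterion \eqref{eq:completioncriterion} of Theorem~\ref{thm:completionclassification} identifies the limit as the point $(L,\chi,Q)$. When $L=\{0\}$ the case $r=0$ applies, and this gives the statement about $\Sigma_0+tI_d$.

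For finite length, convergence of $\ell(S_t)$ alone is not enough; I need an integrable speed. I would repeat the argument giving \eqref{eq:tailpathlength}. Once $K_B\Theta_m(S_t)\le c_B/2$ for all $t\ge t_0$, which holds by the monotonicity $S_t\succeq S_{t_0}$ and Lemma~\ref{lem:latticetheta}, the density stays above $c_B/2$. The Fisher speed is then bounded by a constant times
\[
 \sum_{k\ne0}(k^\top H_{11}k)\,e^{-k^\top S_tk/2},
\]
where summation over $l$ is absorbed into $K_B$ as before. Integrating in $t$ over $[t_0,\infty)$ by Tonelli gives
\[
 \sum_{k:\,k^\top H_{11}k>0}2e^{-k^\top S_{t_0}k/2}\le2\Theta_m(S_{t_0})<\infty.
\]
Terms with $k^\top H_{11}k=0$ contribute nothing, and there are none for $k\ne0$. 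The initial segment $[0,t_0]$ is compact in $\mathcal M_d$ and so has finite length. The curve therefore has finite length, and its endpoint is the point identified above.
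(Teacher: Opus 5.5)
Your argument is correct, but it takes a longer route than the paper's proof.

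\textbf{Your route.} You pass to adapted coordinates and show that only the Schur complement moves, $S_t=S_0+tH_{11}$ with $\ker H_{11}\cap\Z^m=\{0\}$. You prove $\ell(S_t)\to\infty$ by compactness on shells. You then invoke the criterion \eqref{eq:completioncriterion}, and re-run \eqref{eq:tailpathlength} after a threshold $t_0$ for the length.

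\textbf{The paper's route.} The paper stays in the original Fourier coordinates. Since $F_t$ is $F_0$ convolved with a possibly degenerate wrapped Gaussian, $F_t\ge\min F_0$ for \emph{all} $t\ge0$. So no threshold $t_0$ or separate initial segment is needed. Tonelli's theorem with $\int_0^\infty h_ne^{-th_n/2}\dd t=2$ then bounds the whole length by $c_0^{-1/2}\sum_{h_n>0}e^{-a_n/2}$. The endpoint is identified by dominated convergence of the densities together with injectivity of the completed density map in Theorem~\ref{thm:completionclassification}.

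\textbf{Comparison.} In the paper's route, the obstacle you single out (an irrational kernel direction of $H_{11}$) never arises. Neither the length integral nor dominated convergence needs a lower bound on the nonzero $h_n$. Your shell argument essentially recovers the uniform decay of off-$L$ moments, which $L^1$ convergence of densities supplies automatically. What your route buys in exchange is explicit structure: the ray moves only the Schur complement. It also makes Proposition~\ref{prop:endpointdistance} applicable along the ray. This gives, for large $t$, the rate $d_{\mathrm W}(P_t,E(\beta,B))\le C_{B,m}e^{-\ell(S_t)/2}$, expressed through the lattice minimum of the Schur complement.
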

\begin{proof}
The density $F_t$ is the convolution of $F_0$ with the wrapped centered Gaussian probability law of covariance $tH$, possibly degenerate. Thus $F_t\ge c_0:=\min F_0>0$. Put $a_n=n^\top\Sigma_0n$ and $h_n=n^\top Hn$. Termwise differentiation of the Gaussian Fourier series and Tonelli's theorem give
\[
 \operatorname{Length}(F_t:0\le t<\infty)
 \le\frac1{2\sqrt{c_0}}\int_0^\infty
      \sum_{n:h_n>0}h_ne^{-a_n/2-th_n/2}\dd t
 =\frac1{\sqrt{c_0}}\sum_{n:h_n>0}e^{-a_n/2}<\infty.
\]
No positive lower bound on the nonzero $h_n$ is needed. Dominated convergence identifies the uniform density limit with the stated moments, and Theorem~\ref{thm:completionclassification} identifies its endpoint.
\end{proof}

For example, in dimension two, $H=vv^\top$ with $v=(1,1)$ preserves the frequency $(1,-1)$ and yields a rank-one point. With $v=(1,\sqrt2)$ it preserves no nonzero integer frequency and yields $U$. Thus the rank of the divergent real covariance does not by itself determine the dimension of the residual wrapped model.

There are also Fisher-convergent sequences with a collapsing least covariance eigenvalue. Let
\[
 \Sigma_j=j\begin{pmatrix}1+j^2&-j\\-j&1\end{pmatrix},\qquad\mu_j=0.
\]
The matrix in brackets is $R_jR_j^\top$ with $R_j=\left(\begin{smallmatrix}1&-j\\0&1\end{smallmatrix}\right)$ unimodular. Hence $\ell(\Sigma_j)=j$, and \eqref{eq:uniformdistance} implies convergence to $U$ at distance comparable to $e^{-j/2}$. Nevertheless $\lambda_{\min}(\Sigma_j)\sim j^{-1}$.

Finally, weak convergence is insufficient. Fix $a>0$ and put
\[
 M_j=\begin{pmatrix}1&j\\j&j^2+1\end{pmatrix}\in\mathrm{SL}(2,\Z),
 \qquad\Sigma_j=aM_j^\top M_j,\qquad\mu_j=0.
\]
For every fixed $n=(p,q)\ne0$, $|M_jn|\to\infty$, using the quadratic term if $q\ne0$ and the linear term otherwise. All fixed nonzero moments therefore tend to zero, which implies weak convergence to uniformity by density of trigonometric polynomials in the continuous functions on the torus. But $n_j=(j^2+1,-j)$ satisfies $M_jn_j=(1,0)$, so
\[
 |m_{n_j}(P_j)|=e^{-a/2},\qquad
 \|F_{P_j}-1\|_1\ge e^{-a/2},\qquad d_{\mathrm W}(P_j,U)\ge e^{-a/2}.
\]
This sequence is not Fisher-Cauchy: an $L^1$ limit would have to equal its weak limit, contradicting the displayed bound. Thus divergence at each fixed nonzero frequency cannot replace the uniform lattice condition in \eqref{eq:outsidecriterion}.

\section{Contractions and geometry between pieces}\label{sec:contractions}
\subsection{Heat contraction and a global deformation to uniformity}
\label{subsec:heatcontraction}

The completion is topologically simple in one global respect despite its
singular local behavior.  Let $k_t$ be the centered wrapped Gaussian density
of covariance $tI_d$, relative to normalized Haar measure, and write
\[
 \mathcal G_tF=k_t*F,\qquad
 b_t=\min_{\theta\in\T^d}k_t(\theta),\qquad t>0.
\]
Here convolution uses normalized Haar measure.  The spatial Gaussian sum
gives $b_t>0$, while normalization and the nonzero first Fourier coefficient
give $b_t<1$.

\begin{theorem}[Strict heat contraction and contractibility]
\label{thm:heatcontraction}
For $t\ge0$, the map
\[
 \Phi_t(\mu,\Sigma)=(\mu,\Sigma+tI_d),\qquad \Phi_0=\mathrm{id},
\]
extends uniquely to the wrapped Fisher completion.  These maps form a
jointly continuous semigroup, fix $U_d$, and satisfy, for every pair of
completion points $P,Q$ and every $t>0$,
\begin{equation}\label{eq:strictheatcontraction}
 d_{\mathrm W}(\Phi_tP,\Phi_tQ)
 \le\sqrt{1-b_t}\,d_{\mathrm W}(P,Q).
\end{equation}
Their densities are $\mathcal G_tF_P$.  In the classification
\eqref{eq:classifiedmoments},
\begin{equation}\label{eq:heatclassified}
 \Phi_t(L,\chi,Q)=(L,\chi,Q+t|\cdot|^2|_{\operatorname{span}_{\R}L}).
\end{equation}
With the constants in \eqref{eq:uniformdistance},
\begin{equation}\label{eq:uniformheatattraction}
 \sup_{P\in\widehat{\mathcal M}_{d,\mathrm W}}
 d_{\mathrm W}(\Phi_tP,U_d)\le C_de^{-t/2},\qquad t\ge t_d.
\end{equation}
Consequently the continuous homotopy
\begin{equation}\label{eq:heatdeformation}
 \mathcal H(P,s)=
 \begin{cases}
 \Phi_{s/(1-s)}P,&0\le s<1,\\
 U_d,&s=1
 \end{cases}
\end{equation}
is a strong deformation retraction of the full completion onto $U_d$.
It restricts to a strong deformation retraction of the added set, and of
each closed piece $\overline{\mathcal S_L}$, onto $U_d$.
\end{theorem}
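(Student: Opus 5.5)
The plan is to work at the level of densities first, where $\Phi_t$ acts as convolution with $k_t$, and then pass to the completion through Theorem~\ref{thm:completionclassification}.

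\textbf{Heat maps on finite parameters.} If $X\sim N(\mu,\Sigma)$ and $Z\sim N(0,tI_d)$ are independent, then $X+Z\sim N(\mu,\Sigma+tI_d)$. Since wrapping commutes with addition, $F_{\Phi_tP}=\mathcal G_tF_P$. On moments this multiplies $m_n$ by $e^{-t|n|^2/2}$, which gives \eqref{eq:heatclassified} once the extension is known.

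\textbf{Strict contraction for the Fisher metric.} Write $k_t=b_t+(1-b_t)\kappa_t$, where $\kappa_t=(k_t-b_t)/(1-b_t)$ is a probability density. Then
\[
 \mathcal G_tF=(1-b_t)\,\kappa_t*F+b_t .
\]
For a tangent $\dot F$ (total integral zero), the derivative is $\dot G=(1-b_t)\,\kappa_t*\dot F$, while $G\ge(1-b_t)\,\kappa_t*F$. Hence
\[
 \int\frac{\dot G^2}{G}\dd\nu\le(1-b_t)\int\frac{(\kappa_t*\dot F)^2}{\kappa_t*F}\dd\nu .
\]
Convolution with a probability density is a Markov kernel, so joint convexity of $(a,b)\mapsto a^2/b$ (Fisher data processing) bounds the right-hand integral by $\int\dot F^2/F$.

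This gives the infinitesimal bound $g_{\mathrm W}(d\Phi_tV,d\Phi_tV)\le(1-b_t)\,g_{\mathrm W}(V,V)$, since $\Phi_t$ maps $\mathcal M_d$ into $\mathcal M_d$. Integrating along curves yields \eqref{eq:strictheatcontraction} on $\mathcal M_d$.

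\textbf{Extension to the completion.} A Lipschitz self-map of $\mathcal M_d$ extends uniquely to the completion, and the inequality passes to limits. By Lemma~\ref{lem:observables}, densities converge in $L^1$, and $\mathcal G_t$ is $L^1$-continuous. So the extended density is $\mathcal G_tF_P$, and \eqref{eq:heatclassified} follows by comparing moments through the bijection of Theorem~\ref{thm:completionclassification}. The semigroup law holds on $\mathcal M_d$ and extends by continuity. $U_d$ is fixed because its moments vanish off $n=0$.

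\textbf{Uniform attraction.} For $P\in\mathcal M_d$ and $n\ne0$,
\[
 n^\top(\Sigma+tI)n\ge t|n|^2\ge t,
\]
so $\ell(\Sigma+tI)\ge t$. Then \eqref{eq:uniformdistance} gives $d_{\mathrm W}(\Phi_tP,U_d)\le C_de^{-t/2}$ for $t\ge t_d$, uniformly in $P$, and density of $\mathcal M_d$ extends this to every completion point.

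\textbf{Joint continuity.} I expect this to be the main obstacle, because the completion is not locally compact. For continuity in $P$, the maps $\Phi_t$ are uniformly $1$-Lipschitz, so
\[
 d(\Phi_tP,\Phi_sP')\le d(P,P')+d(\Phi_tP,\Phi_sP),
\]
and it suffices to show $t\mapsto\Phi_tP$ is continuous for each fixed $P$, locally uniformly in $P$.

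For finite $P$, the curve $s\mapsto(\mu,\Sigma+sI)$ has speed bounded by the Proposition~\ref{prop:psdrays}-type estimate. Its length over $[s,t]$ is at most
\[
 c_0^{-1/2}\sum_{n\ne0}\bigl(e^{-(a_n+s|n|^2)/2}-e^{-(a_n+t|n|^2)/2}\bigr),
 \qquad c_0=\min F_P .
\]
This bound depends on $P$, so instead I will use $\Phi_s$-images. For $s\ge s_0>0$, $\Phi_sP=\Phi_{s-s_0/2}\Phi_{s_0/2}P$, and $\Phi_{s_0/2}P$ has density at least $b_{s_0/2}$ uniformly in $P$. The length bound then becomes uniform:
\[
 b_{s_0/2}^{-1/2}\sum_{n\ne0}\bigl(e^{-s|n|^2/2}-e^{-t|n|^2/2}\bigr)
\]
when restricted to the $h_n$-part. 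This gives continuity for $s,t\ge s_0$, uniformly in $P$.

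At $t=0$, continuity of $\Phi_tP\to P$ follows from Corollary~\ref{cor:anchoredcomparison}. Indeed, $\|\mathcal G_tF_P-F_P\|_1\to0$ gives $d_{\mathrm W}(\Phi_tP,P)\to0$ for fixed $P$, and combined with the $1$-Lipschitz bound this handles $(t,P')\to(0,P)$.

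\textbf{Deformation retraction.} Continuity of $\mathcal H$ at $s=1$ follows from the uniform attraction \eqref{eq:uniformheatattraction}. For the strong property, $\mathcal H(U_d,s)=U_d$ for all $s$.

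For invariance, \eqref{eq:heatclassified} preserves $L$. Hence each $\mathcal S_L$ is invariant, and so are $\overline{\mathcal S_L}$ and the added set $\bigcup_{r<d}\mathcal S_L$. All of these contain $U_d$, so $\mathcal H$ restricts to each of them.
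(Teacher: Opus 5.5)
Your proof is correct and follows the paper's argument: the Doeblin splitting $k_t=b_t+(1-b_t)\kappa_t$ with $\chi^2$-type data processing (your joint convexity of $a^2/b$ is the paper's conditional Cauchy--Schwarz), Lipschitz extension to the completion, the Fourier multiplier $e^{-t|n|^2/2}$, the bound $\ell(\Sigma+tI_d)\ge t$, and invariance of each $\mathcal S_L$. The only deviation is in joint continuity. You use Corollary~\ref{cor:anchoredcomparison} at $t=0$ and add a uniform length bound at positive times. The paper gets $t=0$ from smoothness at finite points plus approximation, and then uses the semigroup law, $d_{\mathrm W}(\Phi_tP,\Phi_uQ)\le d_{\mathrm W}(P,Q)+d_{\mathrm W}(\Phi_{t-u}Q,Q)$ for $t\ge u$, so your positive-time estimate is correct but unnecessary.
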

\begin{proof}
For a fixed $t>0$, decompose the Markov operator as
\[
 \mathcal G_t=b_t\Pi+(1-b_t)\mathcal R_t,
 \qquad \Pi F=\int F\dd\nu,
\]
where $\mathcal R_t$ is convolution with the nonnegative probability density
$(k_t-b_t)/(1-b_t)$.  If $F$ is a positive density and $h$ is a real
density variation with integral zero, then $\mathcal G_th=(1-b_t)\mathcal R_th$ and
$\mathcal G_tF=b_t+(1-b_t)\mathcal R_tF$.  Therefore
\begin{align*}
 \int\frac{(\mathcal G_th)^2}{\mathcal G_tF}\dd\nu
 &\le(1-b_t)\int\frac{(\mathcal R_th)^2}{\mathcal R_tF}\dd\nu\\
 &\le(1-b_t)\int\frac{h^2}{F}\dd\nu.
\end{align*}
The first inequality uses $\mathcal G_tF\ge(1-b_t)\mathcal R_tF$; this Doeblin-type splitting is the mechanism behind strong data-processing inequalities \cite{PolyanskiyWu2017,Raginsky2016}. The second is the usual conditional Cauchy--Schwarz inequality
for a Markov kernel, the $\chi^2$ form of data processing: pointwise,
$(\mathcal R_th)^2\le (\mathcal R_tF)\mathcal R_t(h^2/F)$, and integration preserves mass.
For finite wrapped parameters, convolution with $k_t$ adds $tI_d$ to the
covariance, so it sends every admissible connecting curve to another such
curve.  Integrating the speed inequality proves
\eqref{eq:strictheatcontraction} for finite parameters.  The Lipschitz
extension theorem for metric completions gives the unique extension and
the same inequality there.  The semigroup identities follow by density,
as does the identity of its density with $\mathcal G_tF_P$.

Fourier multiplication gives
$m_n(\Phi_tP)=e^{-t|n|^2/2}m_n(P)$, proving
\eqref{eq:heatclassified} and $\Phi_tU_d=U_d$.  In particular every finite
time preserves the surviving subgroup exactly.  For finite $P$,
$\ell(\Sigma+tI_d)\ge t$, so \eqref{eq:uniformdistance} proves
\eqref{eq:uniformheatattraction}; approximation by finite points proves
it for every completion point, with the same constants.

At finite parameters, smoothness of the Fisher metric gives
$d_{\mathrm W}(\Phi_tP,P)\to0$ as $t\downarrow0$.  For a completion point
$P$ choose a finite $P'$ close to it.  Nonexpansiveness gives
\[
 d_{\mathrm W}(\Phi_tP,P)
 \le 2d_{\mathrm W}(P,P')+d_{\mathrm W}(\Phi_tP',P'),
\]
proving continuity at $t=0$ for every $P$.  Nonexpansiveness and the
semigroup law then give joint continuity in $(P,t)$.  More explicitly,
for $t\ge u\ge0$,
\[
 d_{\mathrm W}(\Phi_tP,\Phi_uQ)
 \le d_{\mathrm W}(P,Q)+d_{\mathrm W}(\Phi_{t-u}Q,Q).
\]
Uniform attraction proves continuity of \eqref{eq:heatdeformation} at
$s=1$, uniformly in $P$.  It starts at the identity, ends at $U_d$, and
fixes $U_d$ throughout.  By \eqref{eq:heatclassified}, finite times
preserve the added set and each piece; the final point belongs to the
closure of every piece.  This proves all the stated retractions.
\end{proof}

\begin{remark}\label{rem:contraction-scope}
The contraction is a statement about distances in the full wrapped
family and its completion.  It does not assert local compactness or the
existence of minimizing geodesics at the added points.  In particular,
contractibility is compatible with the obstruction in
Theorem~\ref{thm:noncompact}.  The isotropic heat operator here uses the
fixed observation coordinates; a general integer change of coordinates
conjugates it to the corresponding anisotropic Gaussian convolution.
\end{remark}

\begin{remark}[The contraction constant]\label{rem:heatconstant}
Put $q=e^{-t/2}$. By the Jacobi triple product
\cite[Eq.~20.5.3]{DLMF} (see also \cite{Mumford1983}), each one-dimensional factor of $k_t$ is
\[
 \sum_{n\in\Z}q^{n^2}e^{inu}
 =\prod_{j\ge1}(1-q^{2j})\,\bigl|1+q^{2j-1}e^{iu}\bigr|^2.
\]
Every factor is positive, even in $u$, and
nonincreasing for $u\in[0,\pi]$, so the minimum is attained at
$\theta=(\pi,\ldots,\pi)$ and
\[
 b_t=\Bigl(\sum_{n\in\Z}(-1)^ne^{-tn^2/2}\Bigr)^d
 =1-2d\,e^{-t/2}+O(e^{-t}),\qquad
 \sqrt{1-b_t}=\sqrt{2d}\,e^{-t/4}\bigl(1+O(e^{-t/2})\bigr).
\]
This uniform Lipschitz factor is not the large-time decay of the flow on
pairs of points: by \eqref{eq:uniformheatattraction} and the triangle
inequality, $d_{\mathrm W}(\Phi_tP,\Phi_tQ)\le2C_de^{-t/2}$ for all
completion points $P,Q$ and $t\ge t_d$. Inequality
\eqref{eq:strictheatcontraction} holds for every $t>0$ with a factor
strictly less than one; we do not determine the optimal Lipschitz
constant of $\Phi_t$.
\end{remark}

\subsection{Canonical retractions and local contractibility}
\label{subsec:retractions}

The isometric pieces admit a compatible family of retractions.  These
also show that failure of local compactness does not entail failure of
local contractibility.

\begin{theorem}[Retractions onto the closed pieces]
\label{thm:pieceretractions}
For every primitive subgroup $L\le\Z^d$, there is a canonical
$1$-Lipschitz retraction
\[
 R_L:\widehat{\mathcal M}_{d,\mathrm W}\longrightarrow
 \overline{\mathcal S_L}
\]
specified by
\begin{equation}\label{eq:retractionmoments}
 m_n(R_LP)=\begin{cases}m_n(P),&n\in L,\\0,&n\notin L.\end{cases}
\end{equation}
In particular,
\[
 R_L(\Gamma,\chi,Q)=(\Gamma\cap L,\chi|_{\Gamma\cap L},Q|_{\operatorname{span}_{\R}(\Gamma\cap L)}),
 \qquad R_LR_\Gamma=R_\Gamma R_L=R_{L\cap\Gamma}.
\]
The full completion strongly deformation retracts onto
$\overline{\mathcal S_L}$.  The retraction homotopy can be chosen
$1$-Lipschitz in its spatial variable at every time and preserves every
open or closed metric ball centered at a point of
$\overline{\mathcal S_L}$.
\end{theorem}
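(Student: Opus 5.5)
The plan is to realize $R_L$ as a Markov operator on densities, exactly as the heat contraction $\Phi_t$ was realized by convolution in Theorem~\ref{thm:heatcontraction}. In adapted integer coordinates for $L$, with $m=d-r$, let $\Pi_L$ average a density over the first $m$ coordinates and multiply the result by the uniform factor $U_m$. Equivalently, $\Pi_L F$ is the conditional expectation of $F$ given the residual coordinates. On Fourier moments this keeps exactly the frequencies in $L$ and kills all others, which is \eqref{eq:retractionmoments}. The operator depends only on the subgroup $L$ and not on the chosen basis: it is convolution with Haar measure on the subtorus annihilated by $L$.

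Next I check that $\Pi_L$ maps the wrapped family into the completion. For finite $P=(\mu,\Sigma)$, the image is $U_m\otimes\mathrm{WN}_r(\beta,B)$, where $\beta$ and $B$ are the residual mean and the covariance block. This image is the point $E(\beta,B)$ of Proposition~\ref{prop:endpointdistance}, which lies in $\mathcal S_L$. For the Lipschitz bound, I apply to any density curve the $\chi^2$ data-processing inequality already used in the heat-contraction proof, $\int(\Pi_Lh)^2/\Pi_LF\le\int h^2/F$. Equivalently, I use Proposition~\ref{prop:pieces}: marginalization is $1$-Lipschitz, and adjoining a uniformizing factor is isometric. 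Together these give $d_{\mathrm W}(R_LP,R_LP')\le d_{\mathrm W}(P,P')$ for finite points. The bound then extends to the completion by the Lipschitz extension theorem. Since moments determine points (Theorem~\ref{thm:completionclassification}), the moment formula holds on the whole completion by continuity of moments, \eqref{eq:momentlower}. From the moments one reads off the formula for $R_L(\Gamma,\chi,Q)$ and the identities $R_LR_\Gamma=R_\Gamma R_L=R_{L\cap\Gamma}$. The retraction property $R_L|_{\overline{\mathcal S_L}}=\mathrm{id}$ follows from \eqref{eq:piececlosures}.

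For the deformation I use a partial heat flow acting only on the complementary coordinates. In adapted coordinates, set $\Psi_t(\mu,\Sigma)=(\mu,\Sigma+t\,\diag(I_m,0))$. This is convolution with a degenerate wrapped Gaussian, which is again a Markov kernel. So $\Psi_t$ is $1$-Lipschitz, extends to the completion, and multiplies the moment at $(k,l)$ by $e^{-t|k|^2/2}$. I then define $\mathcal H(P,s)=\Psi_{s/(1-s)}P$ for $s<1$ and $\mathcal H(P,1)=R_LP$. The map $\Psi_t$ fixes $\overline{\mathcal S_L}$ pointwise, because those points have no moments with $k\ne0$. It commutes with $R_L$, since $R_L\Psi_t=R_L$. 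Ball preservation then follows: for a center $c\in\overline{\mathcal S_L}$ we have $d(\Psi_tP,c)=d(\Psi_tP,\Psi_tc)\le d(P,c)$, and the same holds for $R_L$.

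The main obstacle is continuity of $\mathcal H$ at $s=1$. I need $d_{\mathrm W}(\Psi_tP,R_LP)\to0$, preferably uniformly on bounded sets, and pointwise at least for every completion point. For finite $P$, the Schur complement of $\Sigma+t\,\diag(I_m,0)$ is $S+tI_m$, so $\ell(S+tI_m)\ge t$. Proposition~\ref{prop:endpointdistance} gives distance at most $C_{B,m}e^{-t/2}$ to $E(\beta,B)=R_LP$, uniformly in $D$ and $\alpha$ and locally uniformly in $B$. For general completion points, and for joint continuity, I approximate by finite points and use nonexpansiveness of $\Psi_t$ and $R_L$, as in the proof of Theorem~\ref{thm:heatcontraction}. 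The point requiring care is that $C_{B,m}$ is uniform only for $B$ in compact sets. Near a fixed $P$, however, the residual covariances of nearby finite approximants stay in a compact set, so this suffices for sequential continuity.
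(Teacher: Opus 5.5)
Your proposal is correct and follows essentially the paper's route. You define $R_L$ as marginalization to the residual block followed by the isometric adjunction of a uniform factor (Proposition~\ref{prop:pieces}), and read off the moments and the composition identities by Fourier uniqueness. Your homotopy is the paper's partial heat flow adding $t\,\diag(I_m,0)$ in adapted coordinates, and you get ball preservation from the fixed-point property and nonexpansiveness. The one variation is at finite $P$: you obtain $\Psi_tP\to R_LP$ from Proposition~\ref{prop:endpointdistance} via $\ell(S+tI_m)\ge t$, which even gives the rate $C_{B,m}e^{-t/2}$, whereas the paper cites the finite-length positive-semidefinite ray of Proposition~\ref{prop:psdrays}.

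Your last two sentences should be dropped, because the claim in them is false. Residual covariances of finite approximants near a fixed completion point $P$ need not stay in a compact subset of $\mathcal P_r$. This fails whenever the surviving subgroup of $P$ does not contain $L$. For instance, take $P=U_d$ with $r\ge1$: some residual moment of the approximants tends to zero, so their residual covariances are unbounded.

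No uniformity is needed. Pointwise convergence at every completion point follows from
\[
d_{\mathrm W}(\Psi_tP,R_LP)\le2d_{\mathrm W}(P,P')+d_{\mathrm W}(\Psi_tP',R_LP'),
\]
with $P'$ finite and close to $P$. Joint continuity at $s=1$ then follows, exactly as in the paper, from
\[
d_{\mathrm W}(\Psi_{t_j}P_j,R_LP)\le d_{\mathrm W}(P_j,P)+d_{\mathrm W}(\Psi_{t_j}P,R_LP).
\]
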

\begin{proof}
Choose adapted integer coordinates $(x,y)\in\T^{d-r}\times\T^r$,
where $r=\operatorname{rank}L$.  Marginalization to $y$ is
$1$-Lipschitz for intrinsic Fisher distances, as in the proof of
Proposition~\ref{prop:pieces}, and therefore extends to the metric
completions.  Compose it with the isometric inclusion that adjoins an
independent uniform $x$ factor.  The resulting map is a $1$-Lipschitz
retraction onto $\overline{\mathcal S_L}$.  Its moments are
\eqref{eq:retractionmoments}, proving independence of the adapted basis,
the classified formula, and the composition identities by uniqueness of
Fourier coefficients.

For the homotopy, let $\Phi_t^L$ add covariance
$t\diag(I_{d-r},0_r)$ in these adapted coordinates.  Convolution with
this possibly degenerate Gaussian is a Markov operator, so conditional
Cauchy--Schwarz makes $\Phi_t^L$ nonexpansive on the finite wrapped family.
It extends to a jointly continuous semigroup on the completion by the
same approximation argument as for $\Phi_t$.  Every point of
$\overline{\mathcal S_L}$ is fixed, as follows from its moments.
For each finite point $P$, Proposition~\ref{prop:psdrays} gives
\[
 \Phi_t^LP\longrightarrow R_LP\quad\text{in }d_{\mathrm W}
 \quad(t\to\infty).
\]
For an arbitrary completion point $P$ and any finite approximation $P'$,
nonexpansiveness of both maps gives
\[
 d_{\mathrm W}(\Phi_t^LP,R_LP)
 \le2d_{\mathrm W}(P,P')+
 d_{\mathrm W}(\Phi_t^LP',R_LP').
\]
It follows that this convergence holds for every $P$.  It is also joint
at infinite time: if $P_j\to P$ and $t_j\to\infty$, then
\[
 d_{\mathrm W}(\Phi_{t_j}^LP_j,R_LP)
 \le d_{\mathrm W}(P_j,P)+d_{\mathrm W}(\Phi_{t_j}^LP,R_LP)
 \longrightarrow0.
\]
Consequently
\[
 \mathcal H_L(P,s)=
 \begin{cases}\Phi^L_{s/(1-s)}P,&0\le s<1,\\R_LP,&s=1\end{cases}
\]
is the required strong deformation retraction.  If
$E\in\overline{\mathcal S_L}$, its fixed-point property and
nonexpansiveness give
$d_{\mathrm W}(\mathcal H_L(P,s),E)\le d_{\mathrm W}(P,E)$
at every time, proving the ball-preservation assertion.  For $r=d$ the
construction is the identity homotopy; for $r=0$ it retracts onto $U_d$.
\end{proof}

\begin{corollary}[Contractible small balls]
\label{cor:localcontractibility}
Every point of the wrapped Fisher completion has a neighborhood basis
of contractible open metric balls.  In particular the completion is
locally contractible, including at its non-locally-compact added points.
\end{corollary}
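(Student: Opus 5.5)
The plan is to produce, for each completion point $E$ and each small radius $R$, an explicit contraction of the open ball $B(E,R)$ inside itself. The previous theorem provides this once $E$ is placed correctly. Let $L$ be the surviving subgroup of $E$, so $E\in\mathcal S_L\subset\overline{\mathcal S_L}$. By Theorem~\ref{thm:pieceretractions}, the homotopy $\mathcal H_L$ preserves every open ball centered at a point of $\overline{\mathcal S_L}$. It therefore restricts to a strong deformation retraction of $B(E,R)$ onto $B(E,R)\cap\overline{\mathcal S_L}$. This reduces contractibility of the ball to contractibility of its intersection with the closed piece.

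Next I would use Proposition~\ref{prop:pieces}. The closed piece is isometric to $\widehat{\mathcal M}_{r,\mathrm W}$ with $r=\operatorname{rank}L$, and under this isometry $E$ corresponds to a finite-parameter point $(\beta,B)\in\mathcal M_r$. Inherited distances agree, so $B(E,R)\cap\overline{\mathcal S_L}$ is the open ball of radius $R$ about $(\beta,B)$ in $\widehat{\mathcal M}_{r,\mathrm W}$. Since $\mathcal M_r$ is open in its completion, which follows from Corollary~\ref{cor:anchoredcomparison} together with the homeomorphism onto $\mathcal C_r$, small balls about an interior point lie entirely in the smooth manifold $\mathcal M_r$. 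There the Fisher metric $g_{\mathrm W}$ is a positive real-analytic Riemannian metric by Proposition~\ref{prop:fullregular}.

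There is one subtlety: the completed distance restricted to $\mathcal M_r$ is its intrinsic distance. Small intrinsic balls of a smooth Riemannian metric below the convexity radius are geodesically convex, hence star-shaped in normal coordinates and contractible. For $r=0$ the piece is the single point $U_d$, and the ball is contracted directly to $U_d$ by $\mathcal H_L$, which coincides with the heat retraction of Theorem~\ref{thm:heatcontraction}. Concatenating the two homotopies contracts $B(E,R)$ for all $R$ below the convexity radius at $(\beta,B)$. These radii give a neighborhood basis, which proves the corollary.

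The main obstacle I expect is justifying that the open metric balls in the completion restricted to the piece are exactly the Riemannian balls of $\mathcal M_r$. Two separate facts are needed. The first is that points of $\overline{\mathcal S_L}\setminus\mathcal S_L$ stay at positive distance from $E$; this holds because $E$ is an interior finite point of $\widehat{\mathcal M}_{r,\mathrm W}$. The second is that the intrinsic distance of $\mathcal M_r$ near $(\beta,B)$ is realized by short geodesics that stay in $\mathcal M_r$. For the second fact I would argue that, for $R$ small, any curve leaving a compact coordinate neighborhood has length at least a fixed multiple of that neighborhood's size. The intrinsic and local Riemannian distances therefore agree on small balls, and standard convexity-radius theory applies.
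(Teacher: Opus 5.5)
Your proposal is correct and follows essentially the paper's proof. You restrict the ball-preserving homotopy $\mathcal H_L$ of Theorem~\ref{thm:pieceretractions} to $B(E,R)$, identify $B(E,R)\cap\overline{\mathcal S_L}$ through Proposition~\ref{prop:pieces} with a small residual ball about a finite point of $\mathcal M_r$, and contract that convex normal ball along radial geodesics. The differences are minor: the paper shows that nearby residual completion points are finite directly from \eqref{eq:momentlower} applied to the nonzero coordinate moments $m_{e_i}(E)$, whereas Corollary~\ref{cor:anchoredcomparison} alone does not supply this, and your ``main obstacle'' is automatic because $\mathcal M_r$ embeds isometrically in its completion with its Riemannian distance.
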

\begin{proof}
Fix a point $E$ and let $L$ be its precise surviving subgroup, of rank
$r$.  Under the isometry
$\overline{\mathcal S_L}\cong\widehat{\mathcal M}_{r,\mathrm W}$,
the point $E$ is a finite residual parameter.  If $r>0$, all its
coordinate moments $m_{e_i}(E)$ in that residual model are nonzero.
The moment Lipschitz inequality implies that any residual completion
point at distance less than
$\min_i|m_{e_i}(E)|$ still has all these moments nonzero.  Its surviving
subgroup contains an integer basis and is therefore the whole
$\Z^r$; thus it is a finite residual parameter.  Choose $\varepsilon_0>0$
smaller than this bound and sufficiently small that every residual
metric ball $B_r(E,\varepsilon)$, $0<\varepsilon<\varepsilon_0$, is a
convex normal ball for its smooth Fisher metric.

By Theorem~\ref{thm:pieceretractions}, the ambient ball
$B(E,\varepsilon)$ strongly deformation retracts, within itself, onto
\[
 B(E,\varepsilon)\cap\overline{\mathcal S_L}
 \cong B_r(E,\varepsilon).
\]
The latter contracts to $E$ by its radial minimizing geodesics, staying
within that ball and fixing $E$.  Concatenating the two homotopies
contracts the ambient ball.  For $r=0$ the first retraction already has
image $\{E\}=\{U_d\}$, so every ball about $U_d$ is contractible.
For finite full-dimensional points, the same argument reduces to the
usual local Riemannian statement.
\end{proof}

\subsection{Product slices, distances between pieces, and geodesics}
\label{subsec:transversegeometry}

Write $X_j=\widehat{\mathcal M}_{j,\mathrm W}$ and $d_j=d_{\mathrm W,j}$,
with $X_0=\{U_0\}$. Products below refer to independent observation
blocks in fixed integer coordinates. An integer torus automorphism
transports all the statements to any compatible primitive splitting.

\begin{proposition}[Product distance bounds and isometric slices]
\label{prop:productslices}
For $d=d_1+\cdots+d_k$, the independent-product map
$\mathcal I:\prod_iX_{d_i}\to X_d$ satisfies
\begin{equation}\label{eq:productdistancebounds}
 \max_i d_{d_i}(P_i,Q_i)
 \le d_d\bigl(\mathcal I(P_1,\ldots,P_k),
                    \mathcal I(Q_1,\ldots,Q_k)\bigr)
 \le\left(\sum_i d_{d_i}(P_i,Q_i)^2\right)^{1/2}.
\end{equation}
In particular, fixing every factor except one gives an isometric
embedding of the remaining completed model, with closed image.
For $P\in X_m$ and $Q\in X_r$, moreover,
\begin{equation}\label{eq:productprojectiondistance}
 \operatorname{dist}_{d_{m+r}}(P\otimes Q,U_m\otimes X_r)
 =d_m(P,U_m),
\end{equation}
and the distance is attained at $U_m\otimes Q$.
\end{proposition}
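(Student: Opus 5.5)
We prove the two bounds in \eqref{eq:productdistancebounds} on finite parameters, pass them to completions by density, and then read off the slice and projection statements.

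\emph{Lower bound.} For each $i$, marginalization onto the $i$th block of coordinates is $1$-Lipschitz for the intrinsic Fisher distances, exactly as in the proof of Proposition~\ref{prop:pieces}: a marginal score is the conditional expectation of the full score, and a marginal of a wrapped Gaussian is again wrapped Gaussian. This map therefore extends to the completions. On a product it returns the $i$th factor, since independent factors have product moments. This gives $d_{d_i}(P_i,Q_i)\le d_d(\mathcal I(P),\mathcal I(Q))$.

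\emph{Upper bound.} Take finite parameters and curves $\gamma_i$ in $\mathcal M_{d_i}$ joining $P_i$ to $Q_i$. Parametrize each on $[0,1]$ at constant speed, with length within $\eta$ of $d_{d_i}(P_i,Q_i)$. The product curve stays in $\mathcal M_d$ with block-diagonal covariance. For independent factors, the Fisher information of a product is additive and the cross terms vanish, because the scores are independent and centered. The squared speed is therefore $\sum_i L_i^2$, and the product curve has length $(\sum_iL_i^2)^{1/2}$. Letting $\eta\to0$ gives the bound.

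\emph{Extension to completions.} The product map is Lipschitz by the upper bound, so it extends uniquely to $\prod_iX_{d_i}$. Both inequalities persist under limits. For the extended map to be well defined on completion points, its density must be the product density. This follows from continuity of the density map in Theorem~\ref{thm:completionclassification}.

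\emph{Isometric slices.} Fix all factors but one, so $P_j=Q_j$ for $j\ne i$. The two sides of \eqref{eq:productdistancebounds} then coincide, and the slice is an isometric embedding. Its image is the isometric image of a complete space, hence complete and therefore closed.

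\emph{Projection distance.} In \eqref{eq:productprojectiondistance}, attainment gives the upper bound: by the slice isometry with the $X_r$ factor fixed,
\[
 d_{m+r}(P\otimes Q,U_m\otimes Q)=d_m(P,U_m).
\]
For the lower bound, take any $U_m\otimes Q'$. Marginalization onto the first $m$ coordinates is $1$-Lipschitz and sends $P\otimes Q$ to $P$ and $U_m\otimes Q'$ to $U_m$. Hence $d_{m+r}(P\otimes Q,U_m\otimes Q')\ge d_m(P,U_m)$.

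The main obstacle is making the product-speed additivity rigorous at completion points, where the factors may be added laws rather than finite parameters. The plan avoids this by proving every inequality for finite parameters only and then using density of $\mathcal M_{d_i}$ in $X_{d_i}$ together with continuity of both sides. This requires the extended $\mathcal I$ to agree with the product of densities; that holds because $L^1$ convergence of the factors implies $L^1$ convergence of the products.
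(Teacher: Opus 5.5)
Your proposal is correct and follows the paper's proof essentially step for step. The upper bound comes from additivity of squared Fisher speed for independent blocks, using constant-speed factor curves on a common interval. The lower bound comes from $1$-Lipschitz marginalization, using that block marginals of wrapped normals are again wrapped normal. The map then extends to completions, with its value identified as the product density through $L^1$ convergence of products. The projection distance follows from the fixed-$Q$ slice isometry together with marginalization onto the first block.
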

\begin{proof}
For finite parameters, independence makes the product Fisher speed
squared the sum of the factor speeds squared: the mixed terms vanish
because scores have mean zero. Parametrize factor curves of lengths
$\ell_i$ at constant speed on the same unit interval. Their product
has length $(\sum_i\ell_i^2)^{1/2}$. Infimizing over the factor
curves proves the upper bound. The marginal of a wrapped normal law on a block of integer coordinates is the wrapped normal law with the corresponding blocks of $\mu$ and $\Sigma$, because wrapping commutes with coordinate projection; this holds with arbitrary cross-covariance. Thus marginal curves stay in $\mathcal M_{d_i}$. Marginalization to each block decreases
Fisher speed and gives the lower bounds. The upper bound extends the
product map continuously to the product of the completions. The
extended density map identifies its value with the product density,
since products of probability densities converge in $L^1$ when their
factors do. Marginalization extends by continuity and retains the
lower bounds. With only one changing factor the two bounds agree.
Its image is complete, hence closed in $X_d$.
For \eqref{eq:productprojectiondistance}, marginalization to the
first block gives the lower bound against every $U_m\otimes Q'$;
the fixed-$Q$ isometric slice attains it at $Q'=Q$.
\end{proof}

Only elementary local circular geometry is needed for a geodesic
application. With normalized angular measure $\nu_1$, put
\[
 H_z(\theta)=1+2\operatorname{Re}\sum_{n\ge1}
 z^n(z\bar z)^{n(n-1)/2}e^{-in\theta},\qquad |z|<1.
\]
For $z=qe^{i\alpha}$ this is the circular wrapped density relative
to $\nu_1$, and $H_0=1$.

\begin{lemma}[A local circular disk at uniformity]
\label{lem:localcirculardisk}
The circular Fisher metric extends real analytically and positively
across $z=0$, with
$g_0=2(\dd x^2+\dd y^2)$ for $z=x+iy$.
Writing it in polar coordinates as
$B(q)\dd q^2+A(q)\dd\alpha^2$, define
\[
 R(q)=\int_0^q\sqrt{B(v)}\,\dd v,
 \qquad
 B(q)=\int\frac{(\partial_qH_q)^2}{H_q}\,\dd\nu_1.
\]
Then $d_1(H_{qe^{i\alpha}},U_1)=R(q)$ and
$R(q)=\sqrt2q(1+O(q^2))$. For all sufficiently small $q>0$,
the unique minimizing circular curve between
$H_{qe^{i\alpha}}$ and $H_{-qe^{i\alpha}}$, up to
parametrization, is the radial diameter through $U_1$; its length
is $2R(q)$.
\end{lemma}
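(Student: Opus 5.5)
The plan has four parts: (i) real analyticity of the circular family in the complex variable $z$, (ii) the radial distance $R(q)$, (iii) its expansion, and (iv) uniqueness of the minimizing diameter.

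\textbf{Analyticity and $g_0$.} The Fourier coefficients of $H_z$ are $z^n(z\bar z)^{n(n-1)/2}$ for $n\ge1$, with complex conjugates for $n\le-1$. They are real polynomials in $(x,y)$ with Gaussian-type decay for $|z|\le q_0<1$, so the series and all its derivatives converge normally there. Hence $H_z$ is real analytic jointly in $(x,y,\theta)$ on $|z|<1$. At $z=0$ we have $H_0=1$, so $H_z\ge1/2$ on a small disk. The Fisher coefficients $\int\partial_iH\,\partial_jH/H\,\dd\nu_1$ are then real analytic, by complexifying $(x,y)$ exactly as in Proposition~\ref{prop:fullregular}.

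At $z=0$, the only first-order terms come from $n=1$: $\partial_xH=2\cos\theta$ and $\partial_yH=2\sin\theta$. Integrating against $\nu_1$ gives $g_0=2(\dd x^2+\dd y^2)$, which is positive. Away from $0$ the metric is the wrapped Fisher metric pulled back by the diffeomorphism $(\alpha,q)\leftrightarrow z$, with $q=e^{-s^2/2}$ and $s^2$ the variance. So positivity holds everywhere.

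\textbf{The radial distance.} Rotation $\theta\mapsto\theta+c$ acts isometrically and rotates $z$, so in polar form the metric is $B(q)\dd q^2+A(q)\dd\alpha^2$ with no cross term. For any curve from $U_1$ (that is, $z=0$) to radius $q$, the length satisfies $\int\sqrt{B\dot q^2+A\dot\alpha^2}\ge\int\sqrt B\,|\dot q|\ge R(q)$. The radial segment attains this bound. Curves leaving the model are not an issue: $d_1$ is intrinsic to the completed circular family, which by Theorem~\ref{thm:completionclassification} with $d=1$ is just this disk, so every competitor lies in the disk. Curves through $z=0$ are handled because the metric is smooth there.

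\textbf{Expansion.} Since $B$ is real analytic and even in $q$, with $B(0)=2$ from $g_0$, we get $R(q)=\sqrt2\,q(1+O(q^2))$.

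\textbf{Uniqueness of the minimizing diameter.} The two points $\pm qe^{i\alpha}$ are antipodal in the analytic coordinate $z$. By the radial argument, any curve between them has length at least $R(q)+R(q)=2R(q)$ whenever it passes through the origin, and the diameter attains $2R(q)$. For uniqueness and for curves avoiding $0$, I would use the fact that a smooth Riemannian metric has, for small $\rho$, a convex normal geodesic ball about $0$, inside which minimizing geodesics are unique. The reflection $z\mapsto-z$ (the shift $\theta\mapsto\theta+\pi$) fixes $0$ and swaps the endpoints. Reflecting across the line $\{\arg z=\alpha\}$, which corresponds to $\theta\mapsto2\alpha-\theta$, is also an isometry. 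The unique minimizing geodesic is invariant under both, so it lies in the fixed line of the second reflection, which is the diameter.

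A remaining gap is that a competitor curve might leave the small ball. Such a curve has length at least $2R(\rho)-2R(q)>2R(q)$ once $q$ is small. That ensures the minimizer stays inside the ball.

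The main obstacle I expect is justifying analyticity at $z=0$ in the Cartesian variables. This needs the observation that $(z\bar z)^{n(n-1)/2}z^n$ is a polynomial, which avoids any singularity of polar coordinates, together with the uniform lower bound on $H_z$.
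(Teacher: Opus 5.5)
Your proposal is correct and follows essentially the paper's proof. The steps match: analyticity from normal convergence of the $z$-series with $H_z$ bounded below near $0$; $g_0$ from the scores $2\cos\theta,2\sin\theta$; the distance $R(q)$ from bounding length by radial variation; and uniqueness from a convex normal neighborhood of the center plus the exit estimate $2(R(\delta)-R(q))>2R(q)$. Your step ``the unique minimizer is reflection-invariant, hence lies on the fixed line'' is a harmless variant of the paper's remark that radial lines are geodesics because they are fixed sets of reflections.

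One inference needs repair. Rotation invariance only makes the polar coefficients independent of $\alpha$. The absence of a $\dd q\,\dd\alpha$ cross term, which your radial lower bound needs, must come from the reflection $\theta\mapsto-\theta$ (or from the vanishing mixed block in Proposition~\ref{prop:fullregular}). Likewise, the evenness of $B$ should be credited to $H_{-q}(\theta)=H_q(\theta+\pi)$.
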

\begin{proof}
The displayed series is real analytic in $(x,y)$ and normally
convergent with all derivatives on smaller disks. Near zero it is
bounded below, so integrating
$H_z^{-1}\dd H_z\otimes\dd H_z$ gives an analytic metric.
At zero its two scores are $2\cos\theta$ and $2\sin\theta$;
orthogonality gives $g_0=2I$. Rotations and reflection of the
observation circle imply the polar form. Since $H_{-q}(\theta)=H_q(\theta+\pi)$, the coefficient $B$ is even and analytic, so $B(q)=2+O(q^2)$.
Small circular arcs have length $O(q)$, so paths through zero
can be approximated in length by punctured-disk paths. Thus the
analytic local metric agrees with the completed circular distance.
Every curve has length at least its total radial variation in $R$,
whereas a radial curve has exactly that length. Passing to the
uniform limit gives the distance formula and its expansion.

By the first part, the center is an ordinary point of a smooth Riemannian disk. Radial lines in arclength are geodesics, being fixed sets of reflections, and continue through the
center as opposite radial lines. We use the standard convex-normal-neighborhood theorem for smooth Riemannian metrics \cite[Chapter~6]{Lee2018}. Choose a strongly convex
normal neighborhood of the center containing a coordinate disk
$|z|\le\delta$. If $R(q)<R(\delta)/2$, any connecting curve
leaving that coordinate disk has length at least
$2(R(\delta)-R(q))>2R(q)$. Inside the normal neighborhood the
short radial diameter is the unique minimizing geodesic. The exit
bound makes this a global distance statement in $X_1$, not merely
a comparison among curves restricted to the coordinate disk.
\end{proof}

\begin{theorem}[Minimizers through added laws and nonattainment]
\label{thm:addedpointgeodesics}
Every added point of $X_d$ is an interior point of a nonconstant
minimizing geodesic in $X_d$. More specifically, write the point in
adapted coordinates as $E=U_m\otimes P_r$, with $m=d-r\ge1$
and $P_r\in\mathcal M_r$. Then
\begin{equation}\label{eq:transversedisk}
 z\longmapsto E_z:=U_{m-1}\otimes H_z\otimes P_r
\end{equation}
is an isometric circular disk centered at $E$. For sufficiently
small $q>0$ and every $\alpha$, a minimizing curve from
$E_{qe^{i\alpha}}$ to $E_{-qe^{i\alpha}}$ passes through $E$
and has length $2R(q)$.

If $r=d-1$, both endpoints, and the punctured diameter between
them, belong to the finite family $\mathcal M_d$. Nevertheless,
no curve staying entirely in $\mathcal M_d$ attains the distance
between these endpoints. Thus $\mathcal M_d$ need not be a
geodesic metric space, although its completion contains an explicit
minimizer for these pairs.
\end{theorem}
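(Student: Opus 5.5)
The plan has three steps: reduce to a product slice, transport the circular diameter through it, and then show that the finite family is strictly worse near the centre.

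\textbf{Isometric disk.} Write $E=U_m\otimes P_r$ in adapted coordinates, which the classification (Theorem~\ref{thm:completionclassification}) allows. Split the $m$ uniform coordinates as $U_{m-1}\otimes U_1$. By Proposition~\ref{prop:productslices}, fixing every factor except the one-dimensional middle one embeds $X_1$ isometrically, with closed image, via $Q\mapsto U_{m-1}\otimes Q\otimes P_r$. The image of $U_1$ is $E$. Composing with the circular disk $z\mapsto H_z$ of Lemma~\ref{lem:localcirculardisk} gives the isometric disk \eqref{eq:transversedisk}.

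\textbf{Minimizer through $E$.} The lemma gives a radial diameter through $U_1$ in $X_1$ that minimizes with length $2R(q)$. Its image under the isometric slice is a curve of the same length through $E$. Since the embedding is isometric, $d_d(E_{qe^{i\alpha}},E_{-qe^{i\alpha}})=d_1(H_{qe^{i\alpha}},H_{-qe^{i\alpha}})=2R(q)$, so the image curve minimizes in $X_d$. This makes $E$ an interior point of a nonconstant minimizing geodesic.

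\textbf{Nonattainment when $r=d-1$.} Here $m=1$, so $E_z=H_z\otimes P_r$ with $z\ne0$ is a genuine wrapped normal with block-diagonal covariance, and it lies in $\mathcal M_d$. Suppose a curve $\gamma$ lying entirely in $\mathcal M_d$ attains length $2R(q)$. Marginalizing to the first coordinate is $1$-Lipschitz, since marginal scores are conditional expectations. It sends $\gamma$ to a curve in the circular family $\mathcal M_1$ joining $H_{\pm qe^{i\alpha}}$, of length at most $2R(q)$.

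\begin{enumerate}
\item By the uniqueness part of Lemma~\ref{lem:localcirculardisk}, this marginal curve must be the radial diameter up to parametrization, so it passes through $U_1$.
\item A marginal of a finite wrapped normal is itself a finite wrapped normal, never uniform, so the marginal curve cannot reach $U_1$. This is a contradiction.
\end{enumerate}

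One caveat is that the marginal length could be strictly less than $2R(q)$. This does not help $\gamma$: the marginal length is bounded below by $d_1=2R(q)$, so equality is forced, and uniqueness of the minimizer then applies.

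\textbf{Main obstacle.} The delicate point is that uniqueness must be stated among all curves in the completed circular family $X_1$, not only among curves in the punctured disk. The lemma asserts this as a global statement via its exit bound, so I would invoke it in exactly that form. That closes the argument and shows $\mathcal M_d$ is not geodesic.
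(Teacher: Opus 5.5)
Your proposal is correct and follows essentially the same route as the paper. The disk is the fixed-factor slice of Proposition~\ref{prop:productslices} composed with Lemma~\ref{lem:localcirculardisk}, and nonattainment comes from three steps: marginalize a putative finite-family minimizer to the varying circular coordinate, use the global uniqueness of the radial diameter in $X_1$ to force it through $U_1$, and note that a finite wrapped normal marginal is never uniform.
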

\begin{proof}
The map in \eqref{eq:transversedisk} is a fixed-factor instance
of Proposition~\ref{prop:productslices}; its center has the density
of $E$, so it is the same completion point by the classification.
Apply Lemma~\ref{lem:localcirculardisk} and transport its diameter
through this isometry. For $m=1$, a nonzero circular parameter
and the fixed nondegenerate residual covariance give a finite
positive-definite block covariance, so only the center is added.

Suppose a finite-family curve attained the distance $2R(q)$.
Its marginal on the varying circular coordinate would have length
at most $2R(q)$ and the same circular endpoints. It must therefore
be the unique minimizing circular diameter, which passes through
$U_1$. But a finite Gaussian covariance has a finite marginal
variance, and its first circular moment has strictly positive
magnitude. Its marginal cannot be uniform. This contradiction
proves nonattainment in the finite family.
\end{proof}

The behavior changes for two independent transverse directions.
This gives a quantitative distance result between different pieces
and shows why their union is not a metric wedge at their intersection.

\begin{corollary}[A bridge between compatible pieces]
\label{cor:crosspiecebridge}
Suppose $m\ge2$ in the preceding theorem. In the same adapted
coordinates, for $q,s>0$ let
\[
 P=H_{qe^{i\alpha}}\otimes U_1\otimes U_{m-2}\otimes P_r,
 \qquad
 Q=U_1\otimes H_{se^{i\beta}}\otimes U_{m-2}\otimes P_r.
\]
Put $a(q)=\int\sqrt{H_q}\,\dd\nu_1$. Then
\begin{equation}\label{eq:crosspiecebounds}
 2\sqrt{2-2a(q)a(s)}
 \le d_d(P,Q)
 \le\sqrt{R(q)^2+R(s)^2}
 <d_d(P,E)+d_d(E,Q).
\end{equation}
In particular no minimizing curve between $P$ and $Q$ passes
through $E$. Uniformly in the phases and with no restriction on
the ratio $q/s$,
\begin{equation}\label{eq:crosspieceasymptotic}
 d_d(P,Q)=\sqrt{2(q^2+s^2)}
             \bigl(1+O(q^2+s^2)\bigr)
 \qquad(q,s\to0).
\end{equation}
The compatibility hypothesis means that the two transverse
frequency generators, together with a basis of the residual
subgroup, extend to an integer basis. Arbitrary primitive
directions need not satisfy it.
\end{corollary}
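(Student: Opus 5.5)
The plan is to sandwich $d_d(P,Q)$ between an ambient Hellinger chord (lower bound) and an explicit product path inside the model (upper bound), then expand both as $q,s\to0$. First we factor out the common part: $P$ and $Q$ share the factor $U_{m-2}\otimes P_r$. By Proposition~\ref{prop:productslices}, fixing those factors gives an isometric embedding of $X_2$, so we may work with the two-dimensional laws $H_{qe^{i\alpha}}\otimes U_1$ and $U_1\otimes H_{se^{i\beta}}$.

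\emph{Upper bound.} Take the product of the radial curve $z\mapsto H_z$ from $qe^{i\alpha}$ down to $0$ in the first factor with the radial curve from $0$ up to $se^{i\beta}$ in the second. Both are parametrized on one interval at constant speed. Lemma~\ref{lem:localcirculardisk} gives the factor lengths $R(q)$ and $R(s)$, and the upper half of \eqref{eq:productdistancebounds} gives the bound $\sqrt{R(q)^2+R(s)^2}$. For the strict inequality, \eqref{eq:productprojectiondistance} together with the isometric slice gives $d_d(P,E)=R(q)$ and $d_d(E,Q)=R(s)$. Since $q,s>0$ makes both positive, $\sqrt{a^2+b^2}<a+b$ holds. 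It follows that a minimizing curve through $E$ would have length at least $d(P,E)+d(E,Q)>d(P,Q)$, so none exists.

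\emph{Lower bound.} As in the proof of Proposition~\ref{prop:sharpendpoint}, the map $F\mapsto2\sqrt F$ sends Fisher curves to $L^2(\nu)$ curves of the same speed and extends continuously to completion points. Hence
\[
d_d(P,Q)\ge2\|\sqrt{F_P}-\sqrt{F_Q}\|_2=2\sqrt{2-2\langle\sqrt{F_P},\sqrt{F_Q}\rangle}.
\]
The common factor $U_{m-2}\otimes P_r$ contributes an inner product of $1$. The remaining two-dimensional inner product factorizes as $\int\sqrt{H_{qe^{i\alpha}}}\,\dd\nu_1\cdot\int\sqrt{H_{se^{i\beta}}}\,\dd\nu_1=a(q)a(s)$, because rotation invariance of $\nu_1$ removes the phases. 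This gives the left side of \eqref{eq:crosspiecebounds}.

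\emph{Asymptotics.} This is the step needing care. The upper bound gives $\sqrt{2q^2(1+O(q^2))+2s^2(1+O(s^2))}=\sqrt{2(q^2+s^2)}(1+O(q^2+s^2))$, with no ratio restriction since the error terms are absorbed termwise. For the lower bound, write $H_q=1+2q\cos\theta+O(q^2)$ uniformly. Then $\sqrt{H_q}=1+q\cos\theta-\tfrac12q^2\cos^2\theta+O(q^2)$ corrections, and we need the exact second-order term. Here $H_q=1+2q\cos\theta+2q^4\cos2\theta+\dots$, so the $q^2$ part of $\sqrt{H_q}$ is $-\tfrac12q^2\cos^2\theta$, with next error $O(q^3)$. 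Integrating gives $a(q)=1-q^2/4+O(q^4)$; the odd term integrates to zero and evenness of $a$ in $q$, from $H_{-q}(\theta)=H_q(\theta+\pi)$, upgrades the error to $O(q^4)$. Then
\[
2-2a(q)a(s)=\tfrac12(q^2+s^2)+O(q^4+s^4+q^2s^2),
\]
so the lower bound equals $\sqrt{2(q^2+s^2)}\,(1+O(q^2+s^2))$, matching the upper bound. Uniformity in the phases holds because neither bound depends on them. The compatibility remark requires no proof: it only records that adapted coordinates realizing both transverse generators exist exactly under that hypothesis.
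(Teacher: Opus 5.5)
Your proposal is correct and follows the paper's proof essentially step for step. The upper bound and the distances $d_d(P,E)=R(q)$, $d_d(E,Q)=R(s)$ come from Proposition~\ref{prop:productslices} and Lemma~\ref{lem:localcirculardisk}; the lower bound is the square-root chord, with the affinity factoring as $a(q)a(s)$; and the asymptotic follows from $R(q)=\sqrt2\,q(1+O(q^2))$ and $a(q)=1-q^2/4+O(q^4)$. Your preliminary remainder $H_q=1+2q\cos\theta+O(q^2)$ is too coarse to fix the $q^2$ coefficient of $\sqrt{H_q}$, but you immediately replace it with the correct $H_q=1+2q\cos\theta+O(q^4)$, which is what the paper uses.
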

\begin{proof}
The upper bound is Proposition~\ref{prop:productslices}; the
distances to $E$ are $R(q)$ and $R(s)$ by its isometric-slice
assertion. The strict inequality follows because both are positive.
Any curve passing through $E$ has length at least their sum.

The square-root map $F\mapsto2\sqrt F$ takes a Fisher curve to
an $L^2(\nu)$ curve of the same speed. Its chord is therefore a
lower bound on the intrinsic completed distance. For the two
product densities above, the square-root affinity factors as
$a(q)a(s)$, since the common residual density integrates to one.
This proves the lower bound. Finally,
$a(q)=1-q^2/4+O(q^4)$: expand the square root of
$H_q=1+2q\cos\theta+O(q^4)$ and integrate; odd powers vanish
by the symmetry $H_{-q}(\theta)=H_q(\theta+\pi)$.
Together with the expansion for $R$, the two bounds give
\eqref{eq:crosspieceasymptotic}. All remainders are uniform on a
fixed small parameter disk and are independent of the phases and
of the common residual factor.
\end{proof}

These statements settle explicit families of cross-piece distances
and minimizing curves through added laws. They do not assert
existence of minimizing geodesics for every pair in the nonproper
completion, nor do they give distances between arbitrary
incompatible frequency pieces.

\section{Local topology at the added points}\label{sec:topology}
\subsection{Integer shears and failure of local compactness}
\begin{lemma}[Torus isometries fixing the endpoint]\label{lem:torusisometries}
For each $T\in\mathrm{GL}(d,\Z)$, the transformation
\[
 \tau_T(\mu,\Sigma)=(T^{-1}\mu,T^{-1}\Sigma T^{-\top})
\]
is an isometry of the full wrapped model. Its extension to the completion fixes $U$.
\end{lemma}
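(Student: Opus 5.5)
The plan is to realize $\tau_T$ as the pushforward of the observed law under a fixed diffeomorphism of the torus, and then use invariance of Fisher information under reparametrization of the sample space.

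First, $T\in\mathrm{GL}(d,\Z)$ has an integer inverse, so $\theta\mapsto T^{-1}\theta$ descends to a well-defined group automorphism $\sigma$ of $\T^d=\R^d/(2\pi\Z^d)$. This map is a diffeomorphism and preserves normalized Haar measure $\nu$, since $|\det T|=1$. If $X\sim N_d(\mu,\Sigma)$, then $T^{-1}X\sim N_d(T^{-1}\mu,T^{-1}\Sigma T^{-\top})$. Because $T^{-1}$ maps $2\pi\Z^d$ onto itself, wrapping commutes with $T^{-1}$, so the law of $\sigma(Y)$ is $F_{\tau_T(\mu,\Sigma)}$. Equivalently, at the level of moments,
\[
 m_n(\tau_TP)=e^{in^\top T^{-1}\mu-n^\top T^{-1}\Sigma T^{-\top}n/2}=m_{T^{-\top}n}(P),
\]
so $F_{\tau_TP}=F_P\circ\sigma^{-1}$.

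Second, $\tau_T$ is a real-analytic bijection of $\mathcal M_d$ with inverse $\tau_{T^{-1}}$; the mean is again taken modulo $2\pi$, which is well defined for the same reason. Since $\sigma$ preserves $\nu$, the scores transform by composition, $U_{\mathrm W,d\tau_T V}\circ\sigma=U_{\mathrm W,V}$. Hence
\[
 g_{\mathrm W}(d\tau_TV,d\tau_TV)=\int (U_{\mathrm W,V}\circ\sigma^{-1})^2F_P\circ\sigma^{-1}\dd\nu=g_{\mathrm W}(V,V).
\]
Alternatively, one can note that data processing under the invertible Markov map $\sigma$ gives $\le$ in both directions. Either way $\tau_T$ is a Riemannian isometry, so it maps admissible curves to curves of the same length in both directions and preserves $d_{\mathrm W}$.

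Third, an isometry extends uniquely to an isometry of the metric completion, with the extension of $\tau_{T^{-1}}$ as its inverse. By continuity of the density map (Lemma~\ref{lem:observables}), the extension still acts on densities by $F\mapsto F\circ\sigma^{-1}$. The uniform density is invariant under this action. By Theorem~\ref{thm:completionclassification} there is exactly one completion point with uniform density, so $U$ is fixed. As a concrete check, $\ell(T^{-1}\Sigma T^{-\top})=\ell(\Sigma)$ because $T^{-\top}$ permutes $\Z^d\setminus\{0\}$, so sequences with $\ell(\Sigma_j)\to\infty$ are mapped to such sequences, consistent with \eqref{eq:uniformdistance}.

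None of these steps is a real obstacle. The only points needing care are well-definedness modulo $2\pi$ and the measure preservation that makes the score identity exact; both follow from $T$ and $T^{-1}$ being integer matrices.
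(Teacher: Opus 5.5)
Your proposal is correct and takes essentially the same route as the paper. Both arguments realize $\tau_T$ through the Haar-preserving torus automorphism $\theta\mapsto T^{-1}\theta$, obtain invariance of the Fisher pairings (in both directions, via $\tau_{T^{-1}}$) by change of sample variables, extend the isometry to the completion, and use injectivity of the completed density map to conclude that the point with uniform density is fixed. Your explicit moment identity $m_n(\tau_TP)=m_{T^{-\top}n}(P)$ and your score computation fill in details that the paper states in a single line.
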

\begin{proof}
The map $\theta\mapsto T^{-1}\theta$ is a parameter-independent diffeomorphism of the torus, preserves normalized Haar measure, and transforms the wrapped law exactly as displayed. Changing sample variables therefore preserves all Fisher pairings. Its inverse is induced by $T^{-1}$, so intrinsic distances are preserved in both directions and the isometry extends to the completion. The isometry preserves the uniform limiting density. Injectivity of the extended density map in Theorem~\ref{thm:completionclassification} therefore implies that its extension fixes $U$.
\end{proof}

\begin{theorem}[Failure of local compactness at every added point]\label{thm:noncompact}
For every $d\ge2$, the full wrapped metric completion $\widehat{\mathcal M}_{d,\mathrm W}$ is not locally compact at any point of $\widehat{\mathcal M}_{d,\mathrm W}\setminus\mathcal M_d$. More precisely, every ball about each added point contains infinitely many interior points separated by a positive distance. No added point has a neighborhood homeomorphic to a finite-dimensional topological manifold, with or without boundary.
\end{theorem}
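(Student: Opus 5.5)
The plan is to reduce to the uniform factor by the product slices and then run the shear mechanism from the introduction. Let $E=U_m\otimes P_r$ in adapted coordinates, with $m=d-r\ge1$ and $P_r\in\mathcal M_r$.

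\textbf{Case $m\ge2$.} Inside the isometric slice $X_m\otimes P_r$ of Proposition~\ref{prop:productslices}, I would build points near $U_m\otimes P_r$ as follows.

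Fix $a$ large and take the first two $\T^m$ coordinates to carry covariance $a\,R_kR_k^\top$, with $R_k=\left(\begin{smallmatrix}1&-k\\0&1\end{smallmatrix}\right)$. Put covariance $a I_{m-2}$ on the remaining $m-2$ coordinates. Call the resulting point of $\mathcal M_m$ by $P_k$.

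By Lemma~\ref{lem:torusisometries}, $\tau_{T}$ with $T$ the block integer shear is an isometry of $\mathcal M_m$ fixing $U_m$. Hence
\[
 d_m(P_k,U_m)=d_m(P_0,U_m)\le C_m e^{-a/2}
\]
by \eqref{eq:uniformdistance}. This is independent of $k$ and as small as desired.

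The slice is isometric, so $d_{\mathrm W}(P_k\otimes P_r,E)=d_m(P_k,U_m)$.

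For separation, take the frequency $n_k=R_k^{-\top}e_1$ or its analogue, so that $n_k^\top\Sigma_kn_k=a$. For $j\ne k$ I would check that $n_k^\top\Sigma_jn_k\ge a(1+(j-k)^2)\ge 2a$; this is a direct computation with the shear. Then \eqref{eq:momentlower} gives
\[
 d_{\mathrm W}\ge e^{-a/2}-e^{-a}.
\]

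To get the ratio $c_E$ independent of $R$, the plan uses the two-sided bound $e^{-a/2}\le d\le C e^{-a/2}$. Choosing $a$ so that $Ce^{-a/2}\approx R/2$ puts infinitely many points in $B(E,R)$ at mutual distance $\gtrsim R/(2C)$. These points lie in the open ball, hence are interior points. An infinite $\delta$-separated set in every closed ball rules out compactness.

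\textbf{Case $m=1$.} Here $r=d-1\ge1$. This is the main obstacle, since $U_1$ alone admits no shear. Instead I would shear the uniform coordinate against a residual coordinate, using the full $d$-dimensional family rather than a product slice.

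Take $T=\left(\begin{smallmatrix}1&0\\k&1\end{smallmatrix}\right)$ acting on $(x,y_1)$. It fixes $E$ only if it preserves $L=\{0\}\times\Z^r$ and the residual law. For the transpose action on frequencies, I would pick the shear so that $L$ is fixed pointwise, i.e.\ $y$ frequencies map to themselves and $(1,0)\mapsto(1,k)$. The extended isometry $\tau_T$ then preserves the density of $E$, since $E$'s moments vanish off $L$. By injectivity of the density map it fixes $E$.

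Apply these $\tau_{T_k}$ to a single finite point $P$ close to $E$ with $x$-variance $a$ and $D=0$. The images are equidistant from $E$. Their moment at frequency $(1,kl)$-type shells separates them exactly as before, with the $B$ factor $e^{-r\|B\|/8}$ absorbed into the constant via \eqref{eq:partialdistance}.

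I expect the bookkeeping of which shear fixes $E$, and verifying separation of the sheared moments, to be the delicate step.

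\textbf{Non-manifold conclusion.} If a neighborhood were homeomorphic to a finite-dimensional manifold, with or without boundary, it would be locally compact. This contradicts the above, so the final assertion follows immediately.
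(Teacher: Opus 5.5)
Your proposal is correct and uses the same integer-shear mechanism as the paper, but the cases are organized differently.

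The paper uses the pure shear of $U$ only when $r=0$. For every added point with $r\ge1$, whatever $m$ is, it starts from $\diag(aI_m,B)$ and shears the first uniform coordinate against the first residual one, $x_1\mapsto x_1-ky_1$. It then reads off the $k$-uniform bound $d_{\mathrm W}(P_k,E)\le C_{B,m}e^{-a/2}$ from Proposition~\ref{prop:endpointdistance}, because the Schur complement stays $aI_m$ and that bound does not depend on $D$.

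Your $m=1$ construction therefore already handles all $r\ge1$, so the product-slice case is optional. What the slice buys you for $m\ge2$ is a separation $q(1-q)$ independent of $B$, at the price of invoking Proposition~\ref{prop:productslices}. In the cross-shear case, you get uniformity in $k$ by noting that the extended $\tau_{T_k}$ fixes $E$ by injectivity of the density map, so all images are equidistant from $E$. This is a valid alternative to the $D$-independence in Proposition~\ref{prop:endpointdistance}. All your points are finite-parameter laws, which is the sense of ``interior'' the paper later needs in Corollary~\ref{cor:nosmooth}.

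Two bookkeeping corrections in the $m=1$ case:
\begin{enumerate}
\item With $\tau_T$ as in Lemma~\ref{lem:torusisometries}, the frequency action is $n\mapsto T^{-\top}n$. So it is $T^{-\top}$, not the displayed $T$, that must fix $\{0\}\times\Z^r$ pointwise. Taken literally, your displayed $T$ gives $T^{-\top}(0,1)=(-k,1)\notin L$, so it would not fix $E$. The correct observation map is $(x,y)\mapsto(x\pm ky_1,y)$, which is the paper's shear.
\item The separation is not ``exactly as before.'' Take the frequency $N_k=(1,-ke_1)$, which your $k$th frequency map sends to $(1,0)$. At $N_k$ the competing moments under the $j$th image have modulus $e^{-a/2}e^{-(j-k)^2B_{11}/2}$, not at most $e^{-a}$. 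The separation bound is therefore $e^{-a/2}(1-e^{-B_{11}/2})$. The factor $e^{-r\|B\|_{\mathrm{op}}/8}$ from \eqref{eq:partialdistance} plays no role in it.
\end{enumerate}
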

\begin{proof}
For integers $k\ge0$ set $T_k=I_d+k\,e_1e_2^\top$, whose only off-diagonal nonzero entry is $k$, in position $(1,2)$. Fix $a\ge t_d$, put $q=e^{-a/2}$, and let
\begin{equation}\label{eq:shearpoints}
 \Sigma_k=aT_k^{-1}T_k^{-\top},\qquad P_k=(0,\Sigma_k).
\end{equation}
All covariances are positive definite. Their first two-coordinate block is
\[
 a\begin{pmatrix}1+k^2&-k\\-k&1\end{pmatrix},
\]
and the remaining block is $aI_{d-2}$ when $d>2$. By Lemma~\ref{lem:torusisometries} and \eqref{eq:uniformdistance},
\begin{equation}\label{eq:shearupper}
 d_{\mathrm W}(P_k,U)=d_{\mathrm W}(P_0,U)\le C_dq
 \quad\hbox{for every }k.
\end{equation}
The isometry fixes $U$; equivalently, $\ell(\Sigma_k)=a$ for every $k$. The least Euclidean eigenvalues are not bounded below uniformly in $k$.

For $n_k=T_k^\top e_1=(1,k,0,\ldots,0)$, direct multiplication gives
\[
 T_j^{-\top}n_k=(1,k-j,0,\ldots,0),\qquad
 m_{n_k}(P_j)=q^{1+(k-j)^2}.
\]
For $j\ne k$, Lemma~\ref{lem:observables} consequently yields
\begin{equation}\label{eq:shearseparation}
 d_{\mathrm W}(P_k,P_j)
 \ge q-q^{1+(k-j)^2}\ge q(1-q)>0.
\end{equation}
This lower bound applies to every connecting path in the full model, including paths that leave any fixed diagonal or sheared subfamily.

Given $\varepsilon>0$, choose a single sufficiently large $a$ so that $C_dq<\varepsilon$. The sequence $(P_k)$ lies in the $\varepsilon$-ball about $U$, and its separation is fixed and positive for that choice of $a$.

Now let an added point have residual rank $1\le r<d$, put $m=d-r$, and choose adapted coordinates. Translating observations reduces its residual mean to zero. Its density is then $U_m\otimes\mathrm{WN}_r(0,B)$. Starting with covariance $\diag(aI_m,B)$, apply the integer shears
\[
 (x_1,x_2,\ldots,x_m,y)\longmapsto(x_1-ky_1,x_2,\ldots,x_m,y),
 \qquad k\ge0,
\]
and let $P_k$ be the resulting mean-zero finite parameters. Their residual covariance is $B$ and their Schur complement is $aI_m$. Proposition~\ref{prop:endpointdistance} gives
\[
 d_{\mathrm W}(P_k,E(0,B))\le C_{B,m}e^{-a/2}
\]
independently of $k$. At the integer frequency $n_k=e_{x_1}+ke_{y_1}$, with $q=e^{-a/2}$,
\[
 m_{n_k}(P_j)=q\exp\!\left(-\tfrac12(k-j)^2B_{11}\right),\qquad
 d_{\mathrm W}(P_k,P_j)\ge q(1-e^{-B_{11}/2})>0\quad(j\ne k).
\]
Thus each prescribed ball about this endpoint also contains an infinite separated set, after choosing $a$ once for the ball. Integer coordinate changes and translations cover all classified added points.

If any added point had a compact neighborhood, some ball about it would lie in that neighborhood. A compact metric space is totally bounded and cannot contain the separated set just constructed. Local compactness is necessary for a finite-dimensional manifold neighborhood, including at a boundary point, proving the final assertion.
\end{proof}

\begin{corollary}[Infinite packing at a fixed relative scale]\label{cor:infinitepacking}
Let $d\ge2$ and let $E$ be any added point. There are constants
$R_E>0$ and $c_E\in(0,1)$ such that every ball $B(E,R)$ with
$0<R<R_E$ contains infinitely many finite-parameter points with pairwise
distances at least $c_E R$. In particular, no neighborhood of $E$ is
doubling or has finite Assouad dimension, and no such neighborhood has a
bi-Lipschitz embedding into a finite-dimensional Euclidean space
\cite{Heinonen2001,Fraser2020}.
\end{corollary}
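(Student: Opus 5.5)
The plan is to make the construction in the proof of Theorem~\ref{thm:noncompact} quantitative. The shear families there have distance to the added point of order $e^{-a/2}$ and pairwise separation of order $q=e^{-a/2}$, so separation and radius are comparable. Concretely, fix the added point $E$ and move it, by an integer coordinate change and a translation (isometries by Lemma~\ref{lem:torusisometries} and its translation analogue), to $E=U_m\otimes\mathrm{WN}_r(0,B)$ with $m=d-r\ge1$. For each $a$ large, take the sheared finite parameters $P_k^{(a)}$ used in that proof: the isotropic ones when $r=0$, and the ones with Schur complement $aI_m$ and residual covariance $B$ when $r\ge1$. These satisfy
\[
 d_{\mathrm W}(P_k^{(a)},E)\le C_Ee^{-a/2},\qquad d_{\mathrm W}(P_k^{(a)},P_j^{(a)})\ge \kappa_Ee^{-a/2}\quad(j\ne k).
\]
Here $C_E$ is $C_d$ or $C_{B,m}$ from \eqref{eq:uniformdistance} and Proposition~\ref{prop:endpointdistance}. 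The separation constant is $\kappa_E=1-e^{-a_0/2}$ when $r=0$, using $a\ge a_0$ so that $1-q\ge1-e^{-a_0/2}$, and $\kappa_E=1-e^{-B_{11}/2}$ when $r\ge1$. Both bounds are uniform in $k$ for every $a$ above a threshold $a_0$ depending only on $E$, since $B$ is fixed.

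Given $R$, choose $a$ so that $C_Ee^{-a/2}$ lies in $[R/2,R)$. The only obstruction is that this forces $a\ge a_0$, which holds once $R<R_E:=C_Ee^{-a_0/2}$. The whole sequence then lies in $B(E,R)$, and its pairwise distances are at least $\kappa_Ee^{-a/2}\ge(\kappa_E/(2C_E))R$. So we may take $c_E=\min\{\kappa_E/(2C_E),1/2\}\in(0,1)$.

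It remains to deduce the metric consequences. Any neighborhood $V$ of $E$ contains $B(E,R)$ for all small $R$. If $V$ were doubling with constant $N$, then iterating the doubling property $\lceil\log_2(2/c_E)\rceil+1$ times would cover $B(E,R)$ by a bounded number $M$ of sets of diameter less than $c_ER$. Each such set contains at most one point of the separated family, so the family has at most $M$ points, contradicting that it is infinite. Finite Assouad dimension is equivalent to doubling, so it fails as well. A bi-Lipschitz image in $\R^N$ would be doubling, since subsets of $\R^N$ are doubling and the property is preserved under bi-Lipschitz maps, so no such embedding exists. These are the standard facts in the cited references.

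The step that needs the most care is uniformity of the threshold and constants in $k$, which is where unbounded cross-covariance enters. This is exactly what Proposition~\ref{prop:endpointdistance} supplies: its threshold and upper constant are independent of $D$. The shears change only $D$, through the shifts by $ky_1$, and leave the Schur complement $aI_m$ and $B$ fixed. So no new estimate is needed beyond checking that the threshold condition on $\ell(aI_m)=a$ is met.
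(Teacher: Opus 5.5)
Your proposal is correct and follows essentially the same route as the paper: you fix $C_E$ and the separation constant from the shear construction of Theorem~\ref{thm:noncompact} before choosing $a$, scale $q=e^{-a/2}$ proportionally to $R$, and note that sets of diameter below $c_ER$ contain at most one separated point. The differences are cosmetic. You take $C_Eq\in[R/2,R)$ where the paper takes $q=R/(2C_E)$, and you get the bi-Lipschitz claim from bi-Lipschitz invariance of doubling rather than from total boundedness of bounded Euclidean sets.
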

\begin{proof}
In the proof of Theorem~\ref{thm:noncompact}, the constants can be fixed
before choosing $a$. More precisely, for all sufficiently small
$q=e^{-a/2}$ the constructed points satisfy
\[
 d_{\mathrm W}(P_k,E)\le C_E q,\qquad
 d_{\mathrm W}(P_k,P_j)\ge b_E q\quad(k\ne j),
\]
where $C_E\ge1$ is fixed. At $E=U$, take $b_E=1/2$ and restrict to
$q\le1/2$. At a positive residual rank, take
$b_E=1-e^{-B_{11}/2}>0$ in adapted coordinates. Given sufficiently
small $R$, choose $q=R/(2C_E)$. All the points lie in $B(E,R)$ and
their pairwise distances are at least $c_E R$, with
$c_E=b_E/(2C_E)$.

A ball of radius less than $c_E R/2$ can contain at most one of these
points. Hence $B(E,R)$ cannot be covered by finitely many balls at that
relative scale. Iterating a finite doubling bound would give such a
cover, which is impossible. Likewise any finite Assouad bound
$N_\rho(B(x,R))\le C(R/\rho)^s$ would give a finite cover at that scale.
Here $N_\rho$ is the least number of radius-$\rho$ balls needed for a
cover; we allow $N_\rho=\infty$. Finally, under a bi-Lipschitz embedding,
these points would give an infinite uniformly separated bounded set in
Euclidean space, contradicting its total boundedness on bounded sets.
\end{proof}

\begin{proposition}[Finite Hausdorff dimension despite infinite local packing]\label{prop:hausdorffdimension}
Let $\partial_{\mathrm W}\mathcal M_d=
\widehat{\mathcal M}_{d,\mathrm W}\setminus\mathcal M_d$ denote the added
set, without asserting that it is a manifold boundary. With the completed
Fisher distance,
\begin{equation}\label{eq:hausdorffdimensions}
 \dim_{\mathrm H}\widehat{\mathcal M}_{d,\mathrm W}
 =\frac{d(d+3)}2,\qquad
 \dim_{\mathrm H}\partial_{\mathrm W}\mathcal M_d
 =\frac{(d-1)(d+2)}2.
\end{equation}
Every neighborhood of any added point has the first Hausdorff dimension,
and its intersection with the added set has the second. For $d\ge2$,
the same neighborhood has infinite Assouad dimension.
\end{proposition}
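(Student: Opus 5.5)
We prove the two dimension formulas by a stratification argument: the completion is the countable union of the pieces $\mathcal S_L$, each a smooth Riemannian manifold of computable dimension. Hausdorff dimension is countably stable, so it suffices to find the dimension of each piece and to check that the metric on each piece is locally bi-Lipschitz to a Euclidean one.

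By Proposition~\ref{prop:pieces}, $\mathcal S_L$ with the inherited distance is isometric to $(\mathcal M_r,d_{\mathrm W,r})$, where $r=\operatorname{rank}L$. That space is a smooth Riemannian manifold of dimension $r+r(r+1)/2=r(r+3)/2$; the metric is positive definite and analytic by Proposition~\ref{prop:fullregular}. On compact coordinate patches its intrinsic distance is bi-Lipschitz to the Euclidean parameter distance, so each piece has Hausdorff dimension exactly $r(r+3)/2$, and every nonempty open subset of a piece has the same dimension. There are countably many primitive subgroups, and the classification (Theorem~\ref{thm:completionclassification}) writes the completion as the disjoint union of the $\mathcal S_L$. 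Countable stability then gives
\[
\dim_{\mathrm H}\widehat{\mathcal M}_{d,\mathrm W}=\max_{0\le r\le d} r(r+3)/2=d(d+3)/2 .
\]
For the added set we take $r\le d-1$, and obtain $(d-1)(d+2)/2$. For $d=1$ this is $0$, consistent with the added set being $\{U_1\}$.

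For the local statements, fix an added point $E$ and a neighborhood $N$. The upper bounds are inherited from the global computation, since Hausdorff dimension is monotone under inclusion. The lower bounds require open pieces of the top strata inside $N$.

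First, $E$ is a metric limit of finite points by construction, so $N$ contains an open subset of $\mathcal M_d$. Since $\mathcal M_d$ is open in the completion (a small intrinsic ball about a finite point stays finite by the moment argument of Corollary~\ref{cor:localcontractibility}), this gives dimension $d(d+3)/2$.

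Second, $N$ contains points of some rank-$(d-1)$ piece. Take adapted coordinates $E=U_m\otimes P_r$. If $m\ge1$, the isometric disk $E_z=U_{m-1}\otimes H_z\otimes P_r$ of Theorem~\ref{thm:addedpointgeodesics} has, for small $z\neq0$, surviving subgroup of rank $r+1$. Iterating this construction (or using the product slice of Proposition~\ref{prop:productslices} with $P'\otimes P_r$, where $P'\in\mathcal M_{m-1}$ is near $U_{m-1}$) produces points of a rank-$(d-1)$ piece arbitrarily close to $E$. These points are finite in the $(d-1)$-dimensional piece, so by the first step the piece contains an open subset around them. That piece has dimension $(d-1)(d+2)/2$ and lies in $N\cap\partial_{\mathrm W}\mathcal M_d$.

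The infinite Assouad dimension is Corollary~\ref{cor:infinitepacking}.

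The main obstacle is the closeness claim for the iterated slices. We need to show that $P'\otimes P_r$ tends to $U_m\otimes P_r$ in $d_{\mathrm W}$ as $P'\to U_{m-1}\otimes$ (a wrapped factor with large variance). This follows from the upper bound in \eqref{eq:productdistancebounds} together with \eqref{eq:uniformdistance} applied in dimension $m$, using the covariance $\diag(tI_{m-1},1)\oplus$ the residual block. Explicitly, the distance of $\mathrm{diag}(tI_{m-1})\otimes H_z\otimes P_r$ to $E$ is at most $\sqrt{d_{m-1}(\cdot,U)^2+R(|z|)^2}$, which is small. A secondary point is the bi-Lipschitz comparison on pieces: inherited distances equal intrinsic piece distances by isometry, so only the standard local comparison of a smooth Riemannian distance with Euclidean distance is needed.
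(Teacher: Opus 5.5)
Your proposal is correct and follows essentially the paper's proof. Both use pieces $\mathcal S_L$ isometric to $\mathcal M_r$ with countable stability for the global values, density of the finite family for the local top dimension, a rank-$(d-1)$ piece accumulating at $E$ for the added set, and Corollary~\ref{cor:infinitepacking} for the Assouad claim. The one difference is how the accumulation is obtained. The paper extends a basis of $L$ to a primitive rank-$(d-1)$ subgroup $\Gamma$ and reads $E\in\overline{\mathcal S_\Gamma}$ off \eqref{eq:piececlosures}. You instead iterate the transverse disks of Theorem~\ref{thm:addedpointgeodesics}, and when $m=1$ no iteration is needed because $E$ itself has rank $d-1$.

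Your last paragraph garbles that construction, though; the iterated-disk argument itself needs no repair. The approximant must keep a uniform factor, e.g.\ $\mathrm{WN}_{m-1}(0,tI_{m-1})\otimes U_1\otimes P_r$, which is your formula with $z=0$. For $z\neq0$ your product is a finite point of $\mathcal M_d$. The covariance $\diag(tI_{m-1},1)\oplus B$ also gives a finite point, and its moment at the $m$th adapted coordinate is $e^{-1/2}$, so it stays at distance at least $e^{-1/2}$ from $E$.
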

\begin{proof}
Put $n_r=r(r+3)/2$ for $r\ge0$. There are countably many primitive
subgroups of $\Z^d$, since each has a finite integer basis.
Proposition~\ref{prop:pieces} identifies the piece for a rank-$r$
subgroup isometrically with the smooth Riemannian manifold
$\mathcal M_r$, of real dimension $n_r$. A smooth positive Riemannian
metric is locally bi-Lipschitz to Euclidean distance on smaller coordinate
neighborhoods; hence that piece has Hausdorff dimension $n_r$. The
rank-zero piece is a singleton. Hausdorff dimension is stable under
countable unions, so the dimensions of the whole completion and its
added set are respectively $\max_{0\le r\le d}n_r=n_d$ and
$\max_{0\le r<d}n_r=n_{d-1}$.

Let $V$ be a neighborhood of an added point $E$. The finite family is
dense in its completion, so $V$ contains a nonempty relatively open
subset of $\mathcal M_d$ and has dimension at least $n_d$; the global
upper bound gives equality. If $E$ has surviving subgroup $L$, extend
a basis of $L$ to a primitive subgroup $\Gamma$ of rank $d-1$.
Equation~\eqref{eq:piececlosures} gives $E\in\overline{\mathcal S_\Gamma}$.
Thus $V\cap\mathcal S_\Gamma$ contains a nonempty open subset of the
rank-$(d-1)$ piece and has dimension $n_{d-1}$. For $d=1$, take
$\Gamma=\{0\}$, and this statement is the zero-dimensional singleton case.
The final claim is Corollary~\ref{cor:infinitepacking}.
\end{proof}

For $d=2$, these dimensions are five for the completion and two for
the added set. The infinite Assouad dimension records a failure of
uniform covering at small scales, and does not imply infinite Hausdorff
dimension or an infinite-dimensional parameter manifold. The distinction
uses the intrinsic completed distance throughout. For the metric-space
formulation of Assouad dimension and its relation to doubling, see
\cite[Section~2.2]{Ishiki2021}; the packing contradiction above is direct.

\begin{corollary}[Obstruction to a smooth Fisher extension]\label{cor:nosmooth}
For $d\ge2$, no added point has a neighborhood in $\widehat{\mathcal M}_{d,\mathrm W}$ admitting a finite-dimensional smooth manifold structure, with or without boundary, compatible with the completion topology. In particular, no such neighborhood carries a positive-definite smooth Riemannian metric extending $g_{\mathrm W}$. For each added point $E$, the same conclusions hold for $\mathcal M_d\cup\{E\}$ with its distance inherited from the completion.
\end{corollary}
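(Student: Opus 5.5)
The plan is to reduce every assertion to the failure of local compactness in Theorem~\ref{thm:noncompact}, together with Corollary~\ref{cor:infinitepacking}, and to handle the three claims in turn.

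\emph{Manifold structure.} Suppose an added point $E$ had a neighborhood $V$ carrying a finite-dimensional smooth manifold structure, with or without boundary, compatible with the completion topology. Then $V$ is in particular a finite-dimensional topological manifold (with boundary). Such a space is locally compact, so $E$ would have a compact neighborhood. Theorem~\ref{thm:noncompact} rules this out. The same theorem also states directly that no added point has a neighborhood homeomorphic to a topological manifold with or without boundary, and that statement already suffices.

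\emph{Riemannian extension.} A smooth positive-definite Riemannian metric extending $g_{\mathrm W}$ would in particular presuppose a smooth manifold structure on a neighborhood compatible with the topology. This is excluded by the first claim. If one insists the metric only be defined on some smooth structure, its existence still presupposes that structure, so nothing further is needed.

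\emph{The subspace $\mathcal M_d\cup\{E\}$.} Give this set the inherited distance. I will check that the separated sets used above lie in it. In the proof of Theorem~\ref{thm:noncompact} and in Corollary~\ref{cor:infinitepacking}, the points $P_k$ are finite parameters, so they lie in $\mathcal M_d$. They satisfy $d_{\mathrm W}(P_k,E)\le C_E q$ and have pairwise distances at least $b_E q>0$.

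Every neighborhood of $E$ in $\mathcal M_d\cup\{E\}$ contains a small ball $B(E,R)\cap(\mathcal M_d\cup\{E\})$. Choosing $q$ small, that ball contains an infinite uniformly separated set. Hence no neighborhood of $E$ is totally bounded, so none is compact. A finite-dimensional manifold neighborhood would be locally compact, and is therefore excluded. The Riemannian statement follows as before.

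The only point needing care is that compatibility with the completion topology forces local compactness. I expect no real obstacle beyond this, since local compactness of manifolds with boundary is standard and the separated points are explicitly finite parameters.
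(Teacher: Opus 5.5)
Your proposal is correct and follows essentially the same route as the paper: a finite-dimensional manifold neighborhood, with or without boundary, would give $E$ a compact neighborhood, contradicting Theorem~\ref{thm:noncompact}. For $\mathcal M_d\cup\{E\}$ you use, as the paper does, that the separated shear points are finite parameters and so remain in every ball about $E$; the only minor difference is that the paper handles the Riemannian clause by noting that such a metric induces the manifold topology, whereas you treat the compatible smooth structure as presupposed.
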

\begin{proof}
Every finite-dimensional manifold, including one with boundary, has compact coordinate neighborhoods. This contradicts Theorem~\ref{thm:noncompact}. A positive-definite smooth Riemannian metric induces the manifold topology. If only $E$ is adjoined, each ball about it still contains the separated interior points from that theorem, giving the same contradiction.
\end{proof}

\begin{corollary}[The same sequence in the two geometries]\label{cor:shearcontrast}
For the parameters in \eqref{eq:shearpoints},
\[
 d_{\mathrm Q}(P_0,P_k)\ge\frac1{\sqrt2}\log(1+k^2)\longrightarrow\infty,
 \qquad d_{\mathrm W}(P_0,P_k)\le2C_dq.
\]
The second bound can be made arbitrarily small by increasing $a$, while for each fixed $a$ the wrapped sequence remains positively separated.
\end{corollary}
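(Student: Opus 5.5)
The plan is to treat the two assertions separately. Both refer to the parameters $P_k=(0,\Sigma_k)$ with $\Sigma_k=aT_k^{-1}T_k^{-\top}$ from \eqref{eq:shearpoints}.

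\emph{Upper bound for $d_{\mathrm W}$.} This is immediate from \eqref{eq:shearupper} and the triangle inequality through $U$. Lemma~\ref{lem:torusisometries} and \eqref{eq:uniformdistance} give $d_{\mathrm W}(P_k,U)=d_{\mathrm W}(P_0,U)\le C_dq$ for every $k$. Hence $d_{\mathrm W}(P_0,P_k)\le 2C_dq$. The final sentence of the statement then follows: $q=e^{-a/2}\to0$ as $a$ increases, while for fixed $a$ the bound \eqref{eq:shearseparation} gives separation at least $q(1-q)$.

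\emph{Lower bound for $d_{\mathrm Q}$.} I would use the Loewner-length estimate \eqref{eq:loewnerlength} from Proposition~\ref{prop:fullcomplete}. Its proof actually controls each direction $x$ separately: along any curve,
\[
 |\tfrac{\mathrm d}{\mathrm dt}\log(x^\top\Sigma x)|\le\sqrt2\sqrt{g_{\mathrm Q}(\dot\gamma,\dot\gamma)}.
\]
Integrating and taking the infimum over curves gives
\[
 d_{\mathrm Q}(P_0,P_k)\ge\frac1{\sqrt2}\sup_{x\ne0}\bigl|\log(x^\top\Sigma_kx)-\log(x^\top\Sigma_0x)\bigr|.
\]
Since $\Sigma_0=aI_d$, it remains to find one $x$ with ratio $x^\top\Sigma_kx/(a|x|^2)$ at least $1+k^2$. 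The first $2\times2$ block of $\Sigma_k$ is $a\left(\begin{smallmatrix}1+k^2&-k\\-k&1\end{smallmatrix}\right)$, so $x=e_1$ gives ratio exactly $1+k^2$. This yields $d_{\mathrm Q}\ge\frac1{\sqrt2}\log(1+k^2)$. The argument uses the mean only through the fact that $g_{\mathrm Q}$ dominates the covariance part, so the torus identification of means is harmless.

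\emph{Main obstacle.} The only real point is to make sure the directional bound is stated for intrinsic distance rather than for a single curve. This is handled exactly as in the proof of Proposition~\ref{prop:fullcomplete}, by applying it to approximately minimizing curves. Everything else is a direct citation.
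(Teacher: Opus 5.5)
Your proposal is correct and matches the paper's proof. The $d_{\mathrm Q}$ bound comes from the directional log-variance estimate of Proposition~\ref{prop:fullcomplete} with $x=e_1$ (variance ratio $1+k^2$), the $d_{\mathrm W}$ bound from the triangle inequality through $U$ with \eqref{eq:shearupper}, and the separation from \eqref{eq:shearseparation}; the only point the paper makes explicit that you leave implicit is that $\mathcal M_d$ embeds isometrically in its completion, so the bound obtained via the added point $U$ also bounds the original intrinsic distance.
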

\begin{proof}
Apply the logarithmic quadratic-form estimate in Proposition~\ref{prop:fullcomplete} with $x=e_1$: the ratio of final to initial variance is $1+k^2$. The wrapped estimate follows from the triangle inequality through $U$ in the completion. The original space embeds isometrically in its metric completion, so this is also a bound for its original intrinsic distance. Separation is \eqref{eq:shearseparation}.
\end{proof}

The theorem uses unrestricted covariance and a fixed torus; the points $P_k$ are distinct distributions, as their moments demonstrate. An additional quotient by integer torus automorphisms would identify these points and would be a different problem. When arbitrary covariances are allowed, every added point has the obstruction to a compatible smooth Riemannian extension.

\subsection{The integer-shear construction in two observation dimensions}\label{subsec:shear2}
The obstruction already occurs for $d=2$, where the full parameter space has dimension five: two means and three covariance entries. Fix $a\ge t_2$, set $q=e^{-a/2}$, and take independent real variables $Z_1,Z_2\sim N(0,a)$. For each integer $k\ge0$, put
\[
 X^{(k)}=(Z_1-kZ_2,Z_2),\qquad
 T_k=\begin{pmatrix}1&k\\0&1\end{pmatrix},\qquad
 \Sigma_k=a\begin{pmatrix}1+k^2&-k\\-k&1\end{pmatrix}.
\]
Then $X^{(k)}=T_k^{-1}(Z_1,Z_2)$ has covariance $\Sigma_k$, and wrapping gives the points $P_k$ of \eqref{eq:shearpoints}. The torus coordinates remain fixed as $k$ varies, so these are different distributions on the same sample space.

\begin{figure}[!htbp]
\centering
\includegraphics[width=\textwidth]{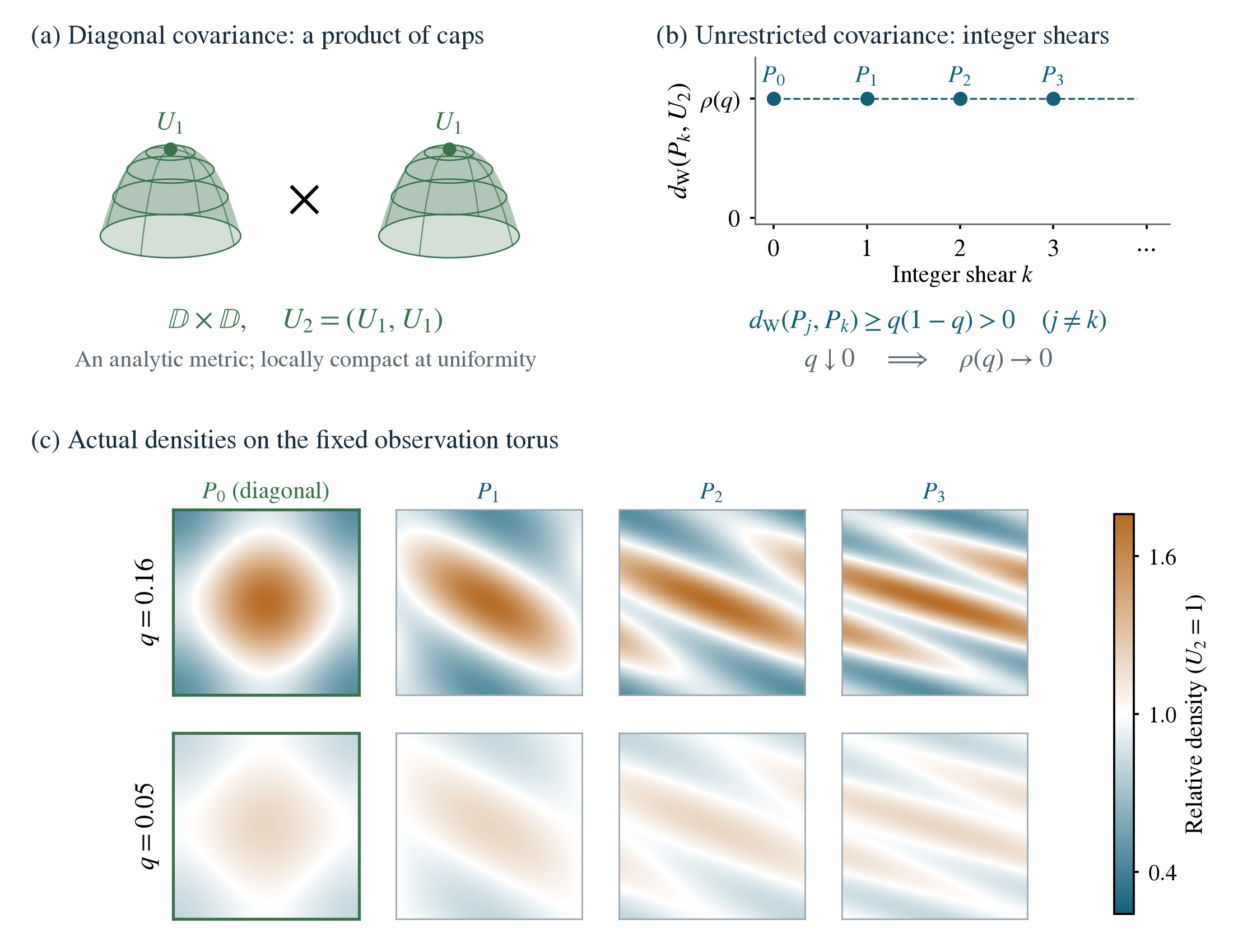}
\caption{Diagonal and unrestricted covariance in two observation dimensions. (a)~For comparison: with its own intrinsic distance, the diagonal subfamily is the Riemannian product of two circular families, since independence makes Fisher speeds add, and each factor completes to a disk; the upward-closing surfaces are schematic, not isometric embeddings. (b)~The full family contains the integer-shear sequence $P_k$ at a common distance from $U_2$, with the separation bound \eqref{eq:shearseparation}. The vertical scale is symbolic. (c)~Relative densities of $P_0,\ldots,P_3$ on $[-\pi,\pi]^2$ with opposite edges identified. Each row fixes $q$; only $P_0$ has diagonal covariance. Both rows use the same color scale, with white denoting the uniform density. Smaller $q$ brings the entire sequence closer to $U_2$.}\label{fig:shear-sequence}
\end{figure}

Their determinant, correlation coefficient before wrapping, and eigenvalues are
\begin{align}
 \det\Sigma_k&=a^2,&
 \rho_k&=-\frac{k}{\sqrt{1+k^2}},\label{eq:shearshape}\\
 \lambda_\pm(\Sigma_k)
 &=\frac a2\left(k^2+2\pm k\sqrt{k^2+4}\right).\label{eq:sheareigen}
\end{align}
These follow from the determinant and trace of the displayed matrix. In particular, as $k\to\infty$ at fixed $a$, $\lambda_+\sim ak^2$, $\lambda_-\sim a/k^2$, and $\rho_k\to-1$. Thus the least eigenvalue tends to zero along this sequence. Its uniform bound on distance to $U$ comes from torus isometries fixing $U$, not from applying the large-covariance estimate directly to every $\Sigma_k$.

The density of $P_k$ can also be written explicitly. With $F(\theta;q)=h_q(\theta)$ from Section~\ref{sec:fullcov}, its density relative to normalized Haar measure on $\T^2$ is
\begin{equation}\label{eq:sheardensity}
 F_k(\theta_1,\theta_2)
 =F(\theta_1+k\theta_2;q)F(\theta_2;q).
\end{equation}
Indeed, applying $T_k$ to the wrapped vector recovers the two independent wrapped $Z_j$; the torus map has Jacobian determinant one. The first factor depends on $\theta_1+k\theta_2$, while the second factor is independent of $k$; the Fisher distance to $U$ is the same for every $k$. For any integer vector $(m,n)$, the moment is
\begin{equation}\label{eq:shearallmoments}
 \E_{P_k}e^{i(mY_1+nY_2)}=q^{m^2+(n-km)^2},
\end{equation}
by the characteristic function of $mZ_1+(n-km)Z_2$. In particular the bounded observable $e^{i(Y_1+kY_2)}$ has expectation $q$ under $P_k$, but expectation $q^{1+(k-j)^2}\le q^2$ under every $P_j$ with $j\ne k$. Its frequency depends on $k$, while its absolute value is always one; Lemma~\ref{lem:observables} therefore gives the same separation bound $q(1-q)$ for all pairs.

At this fixed $a$, increasing $k$ does not make $P_k$ converge to $U$. The second marginal is unchanged and $m_{(0,1)}(P_k)=q$, whereas $m_{(0,1)}(U)=0$. In fact,
\[
 q\le d_{\mathrm W}(P_k,U)=d_{\mathrm W}(P_0,U)\le C_2q.
\]
Increasing $a$ places the whole separated sequence closer to $U$; increasing $k$ produces more separated points at that same distance. This distinction is essential to the local-compactness argument.

To exhibit the failure in a prescribed ball of radius $\varepsilon$, choose $a$ with $C_2e^{-a/2}<\varepsilon$ and then vary only $k$: every $P_k$ lies in the ball about $U$, but no subsequence is Cauchy. On the quotient side the variance of the first real coordinate grows from $a$ to $a(1+k^2)$, which gives the divergence in Corollary~\ref{cor:shearcontrast}.

\begin{remark}[Changing the period lattice]\label{rem:periodlattice}
For any fixed full-rank lattice $\Lambda=W\Z^d$, $W\in\mathrm{GL}(d,\R)$, the sample transformation $x\mapsto2\pi W^{-1}x$ identifies $\R^d/\Lambda$ with the torus used here. It transforms means and all positive-definite covariances bijectively and preserves both Fisher metrics by change of sample variables. The full-covariance comparison, completion classification, and nonlocal-compactness assertions therefore also hold for that fixed lattice, with the dual lattice $\Lambda^*=\{\xi:\xi\cdot\lambda\in2\pi\Z\ \text{for all}\ \lambda\in\Lambda\}$ in place of $\Z^d$ as the frequency lattice. The map also preserves $L^1$ distances between densities relative to normalized Haar measure, so the statistical results of Section~\ref{sec:statistics} transfer as well. Under a fixed change of lattice, the class $\Sigma\succeq\varepsilon I_d$ is carried between two such classes with constants depending on $W$, so the regularization result transfers with altered constants; what is not invariant is uniformity over integer automorphisms, since the images of $\mathcal M_d(\varepsilon)$ under the integer shears share no common spectral floor. The compactness criterion of Proposition~\ref{prop:compactcriterion} is stated through the frequency lattice and is invariant under both kinds of change.
\end{remark}

\section{Statistical obstruction and spectral regularization}\label{sec:statistics}
\begin{corollary}[Local minimax obstruction at every added point]\label{cor:nouniformestimation}
Let $d\ge2$ and let $E$ be an added point. Let $\mathfrak F$ be the
probability densities on $\T^d$, modulo equality $\nu$-almost everywhere,
with the $L^1$ metric and its Borel $\sigma$-algebra. There are
$R_E>0$ and $\lambda_E\in(0,1)$ such that, for every $0<R<R_E$,
every $n\ge1$, and every measurable estimator
$\widehat F_n:(\T^d)^n\to\mathfrak F$,
\begin{equation}\label{eq:localminimaxprob}
 \sup_{P\in B(E,R)\cap\mathcal M_d}
 P^{\otimes n}\!\left(\|\widehat F_n-F_P\|_1\ge\lambda_E R\right)=1.
\end{equation}
Moreover, the local minimax expected loss satisfies
\begin{equation}\label{eq:localminimaxrisk}
 \lambda_E R\le
 \inf_{\widehat F_n}\sup_{P\in B(E,R)\cap\mathcal M_d}
 \E_{P^{\otimes n}}\|\widehat F_n-F_P\|_1
 \le R.
\end{equation}
The same statements hold for $d_{\mathrm W}$ loss and measurable
estimators with values in $\widehat{\mathcal M}_{d,\mathrm W}$.
Consequently no estimator is uniformly consistent on any fixed
neighborhood of an added point. By contrast, a single moment estimator
is uniformly consistent on every compact subset of $\mathcal M_d$.
\end{corollary}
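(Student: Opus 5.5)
The plan is a many-hypotheses testing argument built on the separated families from the proof of Theorem~\ref{thm:noncompact} and Corollary~\ref{cor:infinitepacking}. Take $R_E$ and $c_E$ from Corollary~\ref{cor:infinitepacking}, and put $\lambda_E=c_E/2$.

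\textbf{Step 1: the separated family and disjoint events.} For $0<R<R_E$ the construction gives finite points $P_k\in B(E,R)\cap\mathcal M_d$. Pairwise, $d_{\mathrm W}(P_k,P_j)\ge c_ER$. I will use the slightly stronger fact that the proof establishes this separation through a single bounded observable (a Fourier moment). Hence \eqref{eq:momentlower} also gives $\|F_{P_k}-F_{P_j}\|_1\ge c_ER$.

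For a fixed measurable estimator $\widehat F_n$ and each $k$, set
\[
 A_k=\{\|\widehat F_n-F_{P_k}\|_1<\lambda_ER\}.
\]
By the triangle inequality the sets $A_k\subset(\T^d)^n$ are pairwise disjoint.

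\textbf{Step 2: a shear-invariant reference law.} Let $G=F_E$. This density is positive by Lemma~\ref{lem:densityclosure}. In adapted coordinates it equals $U_m\otimes\mathrm{WN}_r(0,B)$ after a translation. The integer shears $(x_1,\dots,y)\mapsto(x_1-ky_1,\dots,y)$ preserve $\nu$ and fix $G$, because the $x$ factor is uniform. For $E=U$ the reference is $U$ itself.

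The shear maps $F_{P_0}$ to $F_{P_k}$ and fixes $G$. The $\chi^2$-type integral is therefore constant in $k$:
\[
 M:=\int\frac{F_{P_k}^2}{G}\dd\nu=\int\frac{F_{P_0}^2}{G}\dd\nu<\infty .
\]
Finiteness holds because $F_{P_0}$ is bounded and $G$ is bounded below.

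Cauchy--Schwarz under $G^{\otimes n}$ with likelihood ratio $\prod_iF_{P_k}(y_i)/G(y_i)$ then gives
\[
 P_k^{\otimes n}(A_k)\le M^{n/2}\,G^{\otimes n}(A_k)^{1/2}.
\]
Disjointness gives $\sum_kG^{\otimes n}(A_k)\le1$, so $G^{\otimes n}(A_k)\to0$. Consequently $\inf_kP_k^{\otimes n}(A_k)=0$ for each fixed $n$. This is exactly \eqref{eq:localminimaxprob}: the supremum of $P^{\otimes n}(\|\widehat F_n-F_P\|_1\ge\lambda_ER)$ equals $1$.

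\textbf{Step 3: from probability to risk.} Markov's inequality, applied in reverse, turns \eqref{eq:localminimaxprob} into the lower bound $\lambda_ER$ in \eqref{eq:localminimaxrisk}. For the upper bound, use the constant estimator $\widehat F_n\equiv F_E$. Lemma~\ref{lem:observables} gives $\|F_E-F_P\|_1\le d_{\mathrm W}(P,E)<R$.

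\textbf{Step 4: completed-Fisher loss and the remaining claims.} For $d_{\mathrm W}$ loss with estimators valued in $\widehat{\mathcal M}_{d,\mathrm W}$, repeat Steps 1--3 with balls in $d_{\mathrm W}$. Separation in $d_{\mathrm W}$ is already given, and the estimator $\equiv E$ supplies the upper bound $R$. Measurability is handled through the Borel structure of the separable completion.

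The lower bound is independent of $n$, so no estimator is uniformly consistent on any fixed neighborhood of $E$. For a compact $K\subset\mathcal M_d$, the empirical moments at $e_j$ and $e_i+e_j$ converge uniformly at rate $n^{-1/2}$. These moments stay bounded away from zero on $K$, and the local inverse from Proposition~\ref{prop:fullregular} is uniformly Lipschitz there. This gives uniform consistency in parameters, and hence in $L^1$ and $d_{\mathrm W}$.

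\textbf{Main obstacle.} The delicate point is Step 2 at added points of positive residual rank. I need the reference $G$ to be simultaneously shear-invariant and bounded below. I also need the separation in Step 1 to hold in $L^1$, not merely in $d_{\mathrm W}$. I expect both to follow from the explicit moment computation $m_{n_k}(P_j)=q\exp\{-\tfrac12(k-j)^2B_{11}\}$ used in the proof of Theorem~\ref{thm:noncompact}, but this needs to be checked.
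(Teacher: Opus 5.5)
Your proposal is correct and follows essentially the paper's proof: the same shear family at a fixed relative scale, the shear-invariant and positive reference law $F_E$ making $\int F_{P_k}^2/F_E\,\dd\nu$ independent of $k$ (the paper evaluates it as $\mathcal T(q)^m$), pairwise disjoint success events from the $L^1$ moment separation, Cauchy--Schwarz against $F_E^{\otimes n}$, and the constant estimator $F_E$ for the upper bound. The differences are minor and both checks you flagged as the main obstacle hold as you expect: you conclude from $\sum_k F_E^{\otimes n}(A_k)\le1$, whereas the paper averages over $J$ hypotheses to get the explicit finite bound $1-\mathcal T(q)^{mn/2}/\sqrt J$, and you treat $d_{\mathrm W}$ loss directly through $d_{\mathrm W}$-separation, whereas the paper composes with the $1$-Lipschitz density map.
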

\begin{proof}
Use the constants $C_E\ge1$ and $b_E>0$ from the proof of
Corollary~\ref{cor:infinitepacking}. Choose $R_E$ so small that the
construction there applies to $q=R/(2C_E)$ whenever $0<R<R_E$.
The resulting shear points satisfy
\[
 d_{\mathrm W}(P_k,E)\le R/2,\qquad
 \|F_{P_k}-F_{P_j}\|_1\ge b_Eq\quad(k\ne j),
\]
where the second inequality follows from the same trigonometric moments
used in Theorem~\ref{thm:noncompact}. Put
$\lambda_E=b_E/(4C_E)$, so their $L^1$ separation is at least
$2\lambda_E R$.

Let $r$ be the residual rank of $E$ and put $m=d-r$.
In adapted coordinates the unsheared law is
$F_{P_0}(x,y)=H_q(x)F_E(y)$, where
$H_q(x)=\prod_{i=1}^{m}h_q(x_i)$; for $r=0$ take $F_E=1$.
Every shear used above preserves $\nu$ and the law $E$. Orthogonality
of the Fourier series therefore gives the exact identity
\begin{equation}\label{eq:shearchisquare}
 1+\chi^2(P_k\|E)
 =\int\frac{F_{P_k}^2}{F_E}\,\dd\nu
 =\int H_q^2\,\dd\nu_m
 =\mathcal T(q)^m,\qquad
 \mathcal T(q):=\sum_{j\in\Z}q^{2j^2}.
\end{equation}
The residual density cancels, so this quantity is independent of the
shear index and of the residual covariance. Here $E$ has a positive
density and is used only as a reference probability law.

Fix $J\ge2$ and consider $P_0,\ldots,P_{J-1}$. Under
$Q=E^{\otimes n}$ let
$L_k=\dd P_k^{\otimes n}/\dd Q$. Then
$\E_Q L_k^2=\mathcal T(q)^{mn}$. The success events
\[
 A_k=\{\|\widehat F_n-F_{P_k}\|_1<\lambda_E R\}
\]
are pairwise disjoint. Cauchy--Schwarz, first for each event and then
for the finite sum, yields
\[
 \frac1J\sum_{k=0}^{J-1}P_k^{\otimes n}(A_k)
 \le\frac{\mathcal T(q)^{mn/2}}{J}\sum_{k=0}^{J-1}Q(A_k)^{1/2}
 \le\frac{\mathcal T(q)^{mn/2}}{\sqrt J}.
\]
Thus the construction gives the explicit finite-hypothesis bound
\begin{equation}\label{eq:finitepackingrisk}
 \max_{0\le k<J}P_k^{\otimes n}
 \!\left(\|\widehat F_n-F_{P_k}\|_1\ge\lambda_E R\right)
 \ge1-\frac{\mathcal T(q)^{mn/2}}{\sqrt J}
 \ge1-\frac{\exp\{mnq^2/(1-q^2)\}}{\sqrt J}.
\end{equation}
For the last inequality use
$\mathcal T(q)\le1+2q^2/(1-q^2)$ and $\log(1+x)\le x$.
Sending $J\to\infty$ at fixed $R,n$ proves
\eqref{eq:localminimaxprob}. Multiplication by $\lambda_E R$
gives the lower bound in \eqref{eq:localminimaxrisk}. The constant
estimator $F_E$ gives its upper bound, since
$\|F_P-F_E\|_1\le d_{\mathrm W}(P,E)<R$.

The density map on the completion is $1$-Lipschitz by
Lemma~\ref{lem:observables} and
Theorem~\ref{thm:completionclassification}. Composing with it proves
the probability and expected-loss lower bounds for completion-valued
estimators in $d_{\mathrm W}$; the constant estimator $E$ again gives
the upper bound. Every neighborhood contains a ball as above, proving
the asserted obstruction on fixed neighborhoods.

For the positive assertion, use the empirical moments at $e_i$ and
$e_i+e_j$, $i<j$, and reconstruct the mean and covariance as in
Proposition~\ref{prop:fullregular}. Whenever a required moment is zero
or the reconstructed covariance is not positive definite, output the uniform density.
This defines one measurable estimator, independently of a compact set
$K\subset\mathcal M_d$. The moment inverse is continuous near the
compact image of $K$, with positive-definite reconstructed covariance.
For each $\varepsilon>0$, compactness therefore supplies $\eta>0$
such that errors below $\eta$ in all these moments give both $L^1$
and $d_{\mathrm W}$ errors below $\varepsilon$, uniformly on $K$.
There are $N=d(d+1)/2$ complex moments, each with empirical
mean-square error at most $1/n$. The union bound gives
\[
 \sup_{P\in K}P^{\otimes n}
 \!\left(\max_{\ell}|\widehat m_{n_\ell}-m_{n_\ell}(P)|\ge\eta\right)
 \le\frac{N}{n\eta^2}\longrightarrow0.
\]
This proves uniform consistency on every compact finite-parameter set.
\end{proof}

The obstruction is thus quantitative: on each sufficiently small
fixed ball about $E$, the minimax expected error has order $R$ for
every sample size. Infinite packing alone would not prove this;
\eqref{eq:shearchisquare} also controls the likelihood ratios of all
the separated hypotheses against one reference law. This direct
argument complements the information-theoretic lower-bound methods
of \cite{LeCam1973,Tsybakov2009,Yu1997} and the classical nonexistence
results of Bahadur and Savage \cite{BahadurSavage1956}.
The qualification ``fixed'' is essential. If $R=R_n\to0$ and the
center $E$ is known, the constant estimator $E$ already has error at
most $R_n$, so the result does not rule out consistency on shrinking
neighborhoods.

For statistical readers we stress what a fixed Fisher ball about an added point is in parameter terms. By \eqref{eq:uniformdistance}, the ball $B(U_d,R)$ contains every $(\mu,\Sigma)$ with $\ell(\Sigma)\ge t_d$ and $C_de^{-\ell(\Sigma)/2}<R$, where $\ell(\Sigma)=\min_{n\ne0}n^\top\Sigma n$ is the lattice minimum: the mean is arbitrary, and neither the condition number nor the smallest eigenvalue of $\Sigma$ is controlled. The shear covariances $\Sigma_k$, with eigenvalues of order $ak^2$ and $a/k^2$, all lie in one such ball. The same holds at the other added points, with $\ell$ replaced by the lattice minimum of the Schur complement in Theorem~\ref{thm:completionclassification} and the residual parameters close to those of the added point (Proposition~\ref{prop:endpointdistance}). Such a ball is therefore not a bounded or well-conditioned parameter region: it is a neighborhood of the limiting law, whose size is measured through the laws rather than through the parameters. Corollary~\ref{cor:nouniformestimation} says that uniform estimation fails on these neighborhoods, while Theorem~\ref{thm:spectralfloor} below shows that removing the poorly conditioned directions with $\lambda_{\min}(\Sigma)\to0$, but not the large eigenvalues, already restores it.

\subsection{A spectral floor restores uniform estimation}
\label{subsec:spectralfloor}

The positive estimator below uses the standard orthogonal-series construction \cite[Chapter~1]{Tsybakov2009}. The preceding obstruction uses covariance directions whose least Euclidean
eigenvalue tends to zero. A fixed lower spectral bound removes it, even
without an upper bound on covariance. The bound is imposed in the fixed
observation coordinates of the torus.

\begin{theorem}[Compact regularization and uniform estimation]
\label{thm:spectralfloor}
Fix $\varepsilon>0$ and put
\[
 \mathcal M_d(\varepsilon)
 =\{(\mu,\Sigma)\in\mathcal M_d:\Sigma\succeq\varepsilon I_d\},
 \qquad
 \mathcal K_\varepsilon
 =\overline{\mathcal M_d(\varepsilon)}^{\,d_{\mathrm W}}.
\]
Here $d_{\mathrm W}$ is the distance inherited from the full completion.
Then the following hold.
\begin{enumerate}
\item $\mathcal K_\varepsilon$ is compact. In the classification of
Theorem~\ref{thm:completionclassification}, it consists exactly of the
points $(L,\chi,Q)$ satisfying
\begin{equation}\label{eq:spectralfloorclosure}
 Q(v)\ge\varepsilon|v|^2
 \qquad(v\in\operatorname{span}_{\R}L).
\end{equation}
The condition is vacuous at $L=\{0\}$.
\item There are density-valued estimators $\widetilde F_n$ such that
\begin{equation}\label{eq:spectralfloorrate}
 \sup_{P\in\mathcal M_d(\varepsilon)}
 \E_{P^{\otimes n}}\|\widetilde F_n-F_P\|_1
 \le C_{\varepsilon,d}\,
 \frac{\{\log(n+1)\}^{d/4}}{\sqrt n}.
\end{equation}
The estimators and an explicit finite-cutoff bound are given below.
There are also estimators $\widehat P_n$ with values in
$\mathcal M_d(\varepsilon)$ attaining the same $L^1$ rate and satisfying
\begin{equation}\label{eq:spectralfloorfisher}
 \sup_{P\in\mathcal M_d(\varepsilon)}
 \E_{P^{\otimes n}}d_{\mathrm W}(\widehat P_n,P)
 \longrightarrow0.
\end{equation}
\end{enumerate}
No upper bound on $\|\Sigma\|_{\mathrm{op}}$ is imposed, and no optimality
of the logarithmic factor in \eqref{eq:spectralfloorrate} is asserted.
\end{theorem}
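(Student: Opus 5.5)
For part (1) I will work entirely through the moment description of Theorem~\ref{thm:completionclassification}, which identifies the completion with the $L^1$-closure $\mathcal C_d$ through a homeomorphism; compactness can therefore be checked in $L^1$. Every $P\in\mathcal M_d(\varepsilon)$ satisfies $|m_n(F_P)|\le e^{-\varepsilon|n|^2/2}$, and this envelope survives $L^1$ limits by \eqref{eq:momentlower}. Given a sequence in $\mathcal K_\varepsilon$, a diagonal argument extracts a subsequence along which every moment $m_n$ converges. The common envelope gives the tail estimate $\sum_{|n|>N}|m_n|\le\sum_{|n|>N}e^{-\varepsilon|n|^2/2}\to0$, uniformly along the subsequence. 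Hence the densities converge uniformly, so in $L^1$, and the limit lies in the closed set $\mathcal C_d$ and therefore in $\mathcal K_\varepsilon$. This proves sequential compactness.

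For the closure description, suppose $P_j\to(L,\chi,Q)$ with $\Sigma_j\succeq\varepsilon I$. For $n\in L$ we have $Q(n)=\lim n^\top\Sigma_jn\ge\varepsilon|n|^2$. Homogeneity gives the same inequality on $\mathbb Q L$, and continuity extends it to $\operatorname{span}_{\R}L$. Conversely, suppose $Q$ satisfies \eqref{eq:spectralfloorclosure}. Pass to adapted coordinates only conceptually. Let $\Pi$ be the orthogonal projection onto $V=\operatorname{span}_{\R}L$, and let $\widetilde Q$ be the quadratic form on $\R^d$ given by $Q$ on $V$ and $\varepsilon|\cdot|^2$ on $V^\perp$, with no cross terms. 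Then $\widetilde Q\succeq\varepsilon I$. Take $\Sigma_t=\widetilde Q+t(I-\Pi)$, together with any mean $\mu$ whose character on $L$ equals $\chi$. Since $I-\Pi\succeq0$, the matrix $\Sigma_t$ stays $\succeq\varepsilon I$. Its kernel lattice $\ker(I-\Pi)\cap\Z^d=V\cap\Z^d$ equals $L$ by primitivity. Proposition~\ref{prop:psdrays} then shows that $(\mu,\Sigma_t)$ converges to $(L,\chi,Q)$.

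For part (2) I will use the orthogonal-series estimator $\widehat F_n=\sum_{|n|_\infty\le N}\widehat m_n e^{-in\cdot\theta}$, where $\widehat m_n$ is the empirical moment. I bound the $L^1$ error by the $L^2(\nu)$ error. The variance term is at most $\#\{|n|_\infty\le N\}/n\asymp N^d/n$. The squared bias is $\sum_{|n|_\infty>N}e^{-\varepsilon|n|^2}\lesssim e^{-\varepsilon N^2}N^{d-1}$. I choose $N\asymp\sqrt{\varepsilon^{-1}\log(n+1)}$, which makes the variance of order $\{\log(n+1)\}^{d/2}/n$. Taking square roots gives \eqref{eq:spectralfloorrate}, with a constant depending only on $\varepsilon$ and $d$ and none on $\|\Sigma\|$. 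I then set $\widetilde F_n=\widehat F_n$; the $L^1$ rate is already proved for it, so it need not be a density, although a density-valued version is obtained by positive-part renormalization, which at most doubles the $L^1$ error.

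For model-valued estimators, I will let $\widehat P_n$ be a measurable near-minimizer of $P\mapsto\|\widehat F_n-F_P\|_1$ over $\mathcal M_d(\varepsilon)$, within additive error $1/n$. Measurability follows by restricting to a countable dense subset. The triangle inequality gives $\|F_{\widehat P_n}-F_P\|_1\le2\|\widehat F_n-F_P\|_1+1/n$, which keeps the same rate. Fisher consistency will be the main obstacle, since $d_{\mathrm W}$ is not globally comparable to $L^1$. I will overcome this with compactness. The identity map from $\mathcal K_\varepsilon$ with the $L^1$ metric to $\mathcal K_\varepsilon$ with $d_{\mathrm W}$ is a continuous bijection from a compact space, by part (1) and Theorem~\ref{thm:completionclassification}. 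It is therefore uniformly continuous, with a modulus $\omega$ satisfying $\omega(\delta)\to0$. Because $d_{\mathrm W}$ is bounded on the compact set $\mathcal K_\varepsilon$, we get $\E\,d_{\mathrm W}(\widehat P_n,P)\le\omega(\eta)+\operatorname{diam}(\mathcal K_\varepsilon)\,\eta^{-1}\E\|F_{\widehat P_n}-F_P\|_1$, uniformly in $P$. Letting $n\to\infty$ and then $\eta\to0$ gives \eqref{eq:spectralfloorfisher}.
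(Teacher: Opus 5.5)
Your proposal is correct and follows the paper's proof essentially step for step. Compactness comes from the common envelope $|m_k|\le e^{-\varepsilon|k|^2/2}$ together with the completion homeomorphism, and the converse inclusion uses the same covariance ray $\varepsilon I_d+B+t\Pi_{V^\perp}$. The estimator is the same clipped and renormalized truncated Fourier estimator, and Fisher consistency follows from uniform continuity of the inverse density map on the compact set $\mathcal K_\varepsilon$ combined with Markov's inequality.

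The only differences are cosmetic. You invoke Proposition~\ref{prop:psdrays} for convergence along the ray; this is valid when $L\ne\Z^d$, and when $L=\Z^d$ the point already lies in $\mathcal M_d(\varepsilon)$. The paper instead checks the moments directly using $\|\Pi_{V^\perp}k\|\ge c_L$. You also take a near-minimizer over a countable dense set, where the paper uses a finite $n^{-1}$-net of $\mathcal K_\varepsilon$.
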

\begin{proof}
The Fourier moments satisfy the common envelope
\begin{equation}\label{eq:spectralenvelope}
 |m_k(P)|\le e^{-\varepsilon|k|^2/2},\qquad k\in\Z^d.
\end{equation}
Every sequence therefore has a subsequence whose Fourier coefficients
converge at every $k$. The summability of
$|k|^j e^{-\varepsilon|k|^2/2}$ for each $j\ge0$ makes the corresponding
densities and all their derivatives converge uniformly. Their limits
are probability densities; indeed $F_P\ge b_\varepsilon>0$, where
$b_\varepsilon$ is the minimum of the wrapped Gaussian kernel of covariance
$\varepsilon I_d$, because each $F_P$ is that kernel convolved with a
probability law of covariance $\Sigma-\varepsilon I_d\succeq0$.
Uniform convergence implies $L^1$ convergence, so the limits lie in the total-variation closure of the finite family. By Theorem~\ref{thm:completionclassification} they are densities of completion points, and its homeomorphism turns $L^1$ convergence into $d_{\mathrm W}$ convergence. This gives compactness.

At a surviving frequency $k\in L$, passing to the limit in
\eqref{eq:spectralenvelope} gives $Q(k)\ge\varepsilon|k|^2$.
Homogeneity and rational approximation extend this inequality to the
real span $V=\operatorname{span}_{\R}L$. Conversely, let
\eqref{eq:spectralfloorclosure} hold. Extend the nonnegative quadratic
form $Q-\varepsilon|\cdot|^2$ on $V$ to a nonnegative form $B$ on
$\R^d$ by setting it to zero on $V^\perp$ and taking zero cross terms.
With $\Pi_{V^\perp}$ the Euclidean orthogonal projection, set
\[
 \Sigma_t=\varepsilon I_d+B+t\Pi_{V^\perp}.
\]
These covariances satisfy the spectral floor. Since $L$ is primitive,
$V\cap\Z^d=L$. Extend $\chi$ to a torus mean $\mu$; then the Fourier
moments of $(\mu,\Sigma_t)$ tend to those of $(L,\chi,Q)$: for $k\in L$ one has $k^\top\Sigma_tk=\varepsilon|k|^2+B(k)=Q(k)$, while for integer $k\notin L$ the projection $\Pi_{V^\perp}k$ is nonzero, with norm bounded below by a constant $c_L>0$ because $V$ is rational, so $k^\top\Sigma_tk\ge tc_L^2\to\infty$ uniformly in such $k$.
The common envelope upgrades this convergence to uniform density
convergence, and the completion theorem upgrades it to Fisher convergence.
This proves the asserted characterization, including the ranks zero and $d$.

For an integer cutoff $M\ge1$, put
$\Lambda_M=\{k\in\Z^d:\|k\|_\infty\le M\}$ and
\[
 \widehat m_k=\frac1n\sum_{j=1}^n e^{ik^\top Y_j},\qquad
 G_{n,M}(\theta)=\sum_{k\in\Lambda_M}\widehat m_k e^{-ik^\top\theta},
 \qquad
 \widetilde F_{n,M}=\frac{(G_{n,M})_+}{\int(G_{n,M})_+\,\dd\nu}.
\]
The conjugate symmetry makes $G_{n,M}$ real, and its integral is one,
so the denominator is at least one. For any probability density $F$,
clipping and normalizing give
$\|\widetilde F_{n,M}-F\|_1\le2\|G_{n,M}-F\|_1$.
Parseval's identity, the bound
$\E|\widehat m_k-m_k|^2\le1/n$, and
\eqref{eq:spectralenvelope} yield the finite bound
\begin{equation}\label{eq:fourierestimatorbound}
 \sup_{P\in\mathcal M_d(\varepsilon)}
 \E\|\widetilde F_{n,M}-F_P\|_1
 \le2\left\{
 \frac{(2M+1)^d-1}{n}
 +\sum_{\|k\|_\infty>M}e^{-\varepsilon|k|^2}
 \right\}^{1/2}.
\end{equation}
For $A_\varepsilon=\sum_{j\in\Z}e^{-\varepsilon j^2}$, a union bound
on the coordinates and the Gaussian integral give
\[
 \sum_{\|k\|_\infty>M}e^{-\varepsilon|k|^2}
 \le dA_\varepsilon^{d-1}
       \frac{e^{-\varepsilon M^2}}{\varepsilon M}.
\]
Taking $M=\max\{1,\lceil\sqrt{\log(n+1)/\varepsilon}\rceil\}$
proves \eqref{eq:spectralfloorrate}.

To obtain an estimator in the model, choose a finite $n^{-1}$-net of
$\mathcal K_\varepsilon$ in $L^1$ with centers in
$\mathcal M_d(\varepsilon)$. Choose the first center minimizing its
$L^1$ distance to $\widetilde F_n$; this is a measurable finite-choice
rule. The triangle inequality gives
\[
 \|F_{\widehat P_n}-F_P\|_1
 \le2\|\widetilde F_n-F_P\|_1+n^{-1}.
\]
Finally, on the compact space $\mathcal K_\varepsilon$, the inverse
density map is uniformly continuous and $d_{\mathrm W}$ has finite
diameter. For each $\eta>0$ choose $\delta>0$ such that $L^1$ distance
below $\delta$ implies Fisher distance below $\eta$. Markov's inequality
and the last bound show that the supremum of the expected Fisher loss
is at most $\eta+\operatorname{diam}(\mathcal K_\varepsilon)
\delta^{-1}\sup_P\E\|F_{\widehat P_n}-F_P\|_1$.
First send $n\to\infty$ and then $\eta\downarrow0$ to obtain
\eqref{eq:spectralfloorfisher}.
\end{proof}

\begin{remark}[What the shear obstruction requires]
\label{rem:spectralfloorscope}
For a fixed amplitude parameter $a$, the two-dimensional shear
covariances in \eqref{eq:shearpoints} satisfy
$\lambda_{\min}(\Sigma_k)\sim a/k^2$ and
$\lambda_{\max}(\Sigma_k)\sim ak^2$. They therefore leave every class
$\mathcal M_d(\varepsilon)$. If $a$ is increased to maintain a fixed
spectral floor, their Fourier separation scale $e^{-a/2}$ tends to zero;
the fixed-scale infinite packing used in the minimax lower bound is lost.
An unbounded condition number alone is not an obstruction: for example,
$\operatorname{diag}(\varepsilon,t,\varepsilon,\ldots,\varepsilon)$
has unbounded condition number as $t\to\infty$, but belongs to the
uniformly estimable class of Theorem~\ref{thm:spectralfloor}.
The lower bound selects a fixed flat background geometry on the torus;
it is not invariant under arbitrary integer shears. The theorem does
not require, or assert, that a spectral floor is necessary for uniform
estimation on every restricted covariance family.
\end{remark}

\begin{proposition}[A lower scale for the regularized minimax risk]
\label{prop:spectralfloorlower}
For $\varepsilon>0$, $d\ge1$, and $n\ge1$,
\[
 \inf_{\widehat F_n}\sup_{P\in\mathcal M_d(\varepsilon)}
 \E_{P^{\otimes n}}\|\widehat F_n-F_P\|_1
 \ge \frac{3e^{-\varepsilon/2}\min\{1,\sqrt\varepsilon\}}
 {8\pi\sqrt n}.
\]
\end{proposition}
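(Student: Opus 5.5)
The bound will come from a two-point Le Cam argument inside $\mathcal M_d(\varepsilon)$, using a mean shift at the floor covariance. Put $\delta=\min\{1,\sqrt\varepsilon\}/(2\sqrt n)$. Take $P_0=(0,\varepsilon I_d)$ and $P_1=(\delta e_1,\varepsilon I_d)$. Both satisfy $\Sigma\succeq\varepsilon I_d$, so both lie in the class and the supremum dominates the maximum over these two laws.

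\emph{Step 1: separation.} By \eqref{eq:fullmoments} and the first inequality of \eqref{eq:momentlower}, the moment at $e_1$ gives
\[
 \|F_{P_0}-F_{P_1}\|_1\ge|m_{e_1}(P_0)-m_{e_1}(P_1)|
 =e^{-\varepsilon/2}|1-e^{i\delta}|=2e^{-\varepsilon/2}\sin(\delta/2).
\]
Since $\delta\le1/2<\pi$, concavity of the sine gives $2\sin(\delta/2)\ge 2\delta/\pi$. Hence the separation is at least $2e^{-\varepsilon/2}\delta/\pi$.

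\emph{Step 2: indistinguishability.} I bound $d_{\mathrm{TV}}(P_0^{\otimes n},P_1^{\otimes n})$ through Kullback--Leibler divergence. Wrapping is a deterministic map of the real normal vector, so data processing gives
\[
 \mathrm{KL}(P_1\|P_0)\le\mathrm{KL}\bigl(N(\delta e_1,\varepsilon I)\,\|\,N(0,\varepsilon I)\bigr)=\frac{\delta^2}{2\varepsilon}.
\]
By additivity over the $n$ independent observations and Pinsker's inequality,
\[
 d_{\mathrm{TV}}(P_0^{\otimes n},P_1^{\otimes n})\le\sqrt{\frac{n\delta^2}{4\varepsilon}}=\frac{\delta\sqrt n}{2\sqrt\varepsilon}\le\frac14,
\]
where the last inequality is exactly where the factor $\min\{1,\sqrt\varepsilon\}$ in $\delta$ is used.

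\emph{Step 3: Le Cam reduction.} For any estimator $\widehat F_n$, write $F_i=F_{P_i}$. The triangle inequality $\|\widehat F_n-F_0\|_1+\|\widehat F_n-F_1\|_1\ge\|F_0-F_1\|_1$ holds pointwise. Integrating it against $\min(\dd P_0^{\otimes n},\dd P_1^{\otimes n})$, whose total mass is $1-d_{\mathrm{TV}}$, gives
\[
 \E_0\|\widehat F_n-F_0\|_1+\E_1\|\widehat F_n-F_1\|_1\ge\|F_0-F_1\|_1\,(1-d_{\mathrm{TV}}).
\]
The larger of the two risks is therefore at least $\tfrac12\cdot\tfrac34\cdot2e^{-\varepsilon/2}\delta/\pi$. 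Substituting $\delta$ yields $3e^{-\varepsilon/2}\min\{1,\sqrt\varepsilon\}/(8\pi\sqrt n)$, which is the stated constant.

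No step is a real obstacle; the only care needed is the bookkeeping of constants. First, the KL bound must be taken at the Gaussian level via data processing, rather than computed for the wrapped laws directly. Second, $\delta$ must be small enough that the Pinsker bound is at most $\tfrac14$ while the linearization of $\sin$ stays valid. Measurability of the estimators plays no role, because only the pointwise triangle inequality is used.
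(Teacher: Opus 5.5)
Your proposal is correct and is essentially the paper's proof: it uses the same two wrapped laws with covariance $\varepsilon I_d$ and means $0$ and $\delta e_1$, the same first-moment separation $(2/\pi)e^{-\varepsilon/2}\delta$, and the same two-point reduction giving $\tfrac38$ of that separation. The only variation is in bounding total variation: you use data processing for Kullback--Leibler divergence and then Pinsker's inequality, whereas the paper uses the Gaussian Hellinger affinity $\exp\{-n\delta^2/(8\varepsilon)\}$ with Cauchy--Schwarz and then data processing for total variation, and both routes give the same bound $\delta\sqrt n/(2\sqrt\varepsilon)\le\tfrac14$.
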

\begin{proof}
Use the two wrapped laws with covariance $\varepsilon I_d$ and means
$0$ and $h_ne_1$, where
$h_n=\min\{1,\sqrt\varepsilon\}/(2\sqrt n)$.
Their first trigonometric moments give $L^1$ separation
\[
 s_n\ge e^{-\varepsilon/2}|e^{ih_n}-1|
 \ge (2/\pi)e^{-\varepsilon/2}h_n.
\]
The unwrapped product Gaussian laws have affinity
$\exp\{-nh_n^2/(8\varepsilon)\}$.
By Cauchy--Schwarz, their total variation is at most
$\sqrt{1-e^{-nh_n^2/(4\varepsilon)}}\le1/4$; applying the wrapping
map cannot increase total variation. If $p_0,p_1$ are the wrapped
product densities, the triangle inequality for each estimated density
gives a sum of the two risks at least
$s_n\int\min(p_0,p_1)=s_n(1-d_{\mathrm{TV}}(p_0,p_1))$.
Their maximum is consequently at least $3s_n/8$, which is the stated
bound.
\end{proof}

Thus the regularized minimax $L^1$ risk lies between a constant times
$n^{-1/2}$ and a constant times $n^{-1/2}\{\log(n+1)\}^{d/4}$.
The logarithmic gap is not resolved here.

\subsection{The exact compactness criterion}
The spectral floor is one way to exclude the shear mechanism. The next result identifies exactly which subfamilies have compact closure.

\begin{proposition}[Compact subfamilies]\label{prop:compactcriterion}
Let $\mathcal F\subset\mathcal M_d$. The closure of $\mathcal F$ in $\widehat{\mathcal M}_{d,\mathrm W}$ is compact if and only if
\begin{equation}\label{eq:uniformdecay}
 h_{\mathcal F}(n):=\inf_{(\mu,\Sigma)\in\mathcal F}n^\top\Sigma n\longrightarrow\infty
 \qquad(|n|\to\infty),
\end{equation}
equivalently $\sup_{P\in\mathcal F}|m_n(F_P)|=e^{-h_{\mathcal F}(n)/2}\to0$. No condition on the means is needed.
\end{proposition}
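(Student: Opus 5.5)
The equivalence of the two forms of \eqref{eq:uniformdecay} is immediate from $|m_n(F_P)|=e^{-n^\top\Sigma n/2}$. The metric and density topologies agree by Theorem~\ref{thm:completionclassification}, so I would prove compactness of the total-variation closure of $\{F_P:P\in\mathcal F\}$. That closure lies in $\mathcal C_d$, because $\mathcal C_d$ is itself the $L^1$ closure of the full family. Compactness of a closed set in a metric space is sequential, so both directions reduce to statements about sequences in $\mathcal F$.

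\emph{Sufficiency.} Put $\eta(n)=\sup_{P\in\mathcal F}|m_n(F_P)|$, which tends to zero. Take a sequence $P_j\in\mathcal F$. By a diagonal argument, pass to a subsequence along which every moment $m_n(P_j)$ converges to some $c_n$, with $|c_n|\le\eta(n)$. I would then show that $F_{P_j}$ is Cauchy in $L^1$. First take $L^2$ norms: $\|F_{P_j}-F_{P_i}\|_2^2=\sum_n|m_n(P_j)-m_n(P_i)|^2$. Split at $|n|\le N$ and $|n|>N$. The finite part tends to zero. The tail is at most $4\sum_{|n|>N}\eta(n)^2$, and this is where uniform decay must give summability.

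This tail is the main obstacle, since mere convergence of $\eta(n)$ to zero does not give summability. I would exploit the quadratic structure. Because $\eta(n)\to0$, there is a finite set $\{n_1,\dots,n_d\}$ of frequencies forming a basis of $\R^d$ with $h_{\mathcal F}$ large on it. Quadratic forms are controlled by their values on a basis and its pairwise sums. Here, however, only infima are available, so the argument must instead use superadditivity-type bounds: $h_{\mathcal F}(kn)=k^2h_{\mathcal F}(n)$, together with the parallelogram inequality $Q(n+p)\ge \tfrac12 Q(n)-Q(p)$ applied to each $\Sigma$.

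The cleanest route: choose $R$ with $h_{\mathcal F}(n)\ge 2$ for all $|n|>R$. Every $n$ can be written as $km+r$ with $|r|\le R$ and $m$ in a bounded set of frequencies all satisfying $h\ge c>0$. Then $n^\top\Sigma n\ge c'|n|^2/C-C''$ uniformly over $\Sigma$ in $\mathcal F$. One clean fact this yields is the uniform lattice minimum $\inf_{\mathcal F}\ell(\Sigma)\ge c>0$ together with $\ell(\Sigma)\to$, which gives Gaussian control of $\eta$.

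If that fails, I would avoid $L^2$ altogether and use the heat decomposition instead. For small $t$, $\|\mathcal G_tF-F\|_1$ is small uniformly over $\mathcal F$, because each $F_P$ is a wrapped Gaussian whose Fourier mass at large $|n|$ is uniformly small. Meanwhile $\mathcal G_tF_{P_j}$ converges uniformly, since the moments converge and the factor $e^{-t|n|^2/2}$ is summable. An $\varepsilon/3$ argument then gives the $L^1$ Cauchy property.

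\emph{Necessity.} Suppose \eqref{eq:uniformdecay} fails. Then there are $P_j\in\mathcal F$ and frequencies $|n_j|\to\infty$ with $|m_{n_j}(P_j)|\ge\delta>0$. If the closure were compact, a subsequence would converge in $L^1$ to some $F$. Lemma~\ref{lem:observables} gives $|m_{n_j}(F)|\ge\delta-o(1)$, contradicting the Riemann--Lebesgue lemma for the $L^1$ function $F$.
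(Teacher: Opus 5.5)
\paragraph{Overall.} Your necessity argument is correct and is the one the paper uses. The sufficiency argument has a genuine gap at exactly the step you flag, and neither of your proposed ways around it works.

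\paragraph{Why the proposed tail estimates fail.} The claimed uniform bound $n^\top\Sigma n\ge c'|n|^2/C-C''$ on $\mathcal F$ is false. The family $\{(\mu,jB_j)\}$ of Example~\ref{ex:d2} satisfies \eqref{eq:uniformdecay}, yet $(1,k)^\top(kB_k)(1,k)=k$. So $h_{\mathcal F}$ grows only linearly along $(1,k)$.

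More seriously, no estimate through the envelope $\eta$ can work, because $\eta$ need not be square-summable. Take $\Sigma_k=a_k\bigl(\begin{smallmatrix}1+k^2&-k\\-k&1\end{smallmatrix}\bigr)$ with $a_k=\log(k+2)$. For $n\ne0$ one has $n^\top\Sigma_kn=a_k|(n_1,n_2-kn_1)|^2\ge a_k$, so \eqref{eq:uniformdecay} holds. But $\eta((1,k))^2\ge e^{-a_k}=1/(k+2)$, so $\sum_n\eta(n)^2=\infty$.

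The heat fallback is circular. Uniform smallness of $\|\mathcal G_tF_P-F_P\|_1$ over $\mathcal F$ is essentially a restatement of $L^1$ precompactness. The reason you give for it, uniformly small Fourier mass at large $|n|$, is precisely the unproved tail estimate.

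\paragraph{The missing idea.} Split frequencies by the value of $n^\top\Sigma n$, not by $|n|$.
\begin{enumerate}
\item Since $h_{\mathcal F}(kn)=k^2h_{\mathcal F}(n)$, condition \eqref{eq:uniformdecay} forces $h_{\mathcal F}(n)>0$ for every $n\ne0$. Hence $c_0=\inf_{n\ne0}h_{\mathcal F}(n)>0$, so $\ell(\Sigma)\ge c_0$ on $\mathcal F$. You state this fact but never use it.
\item The packing count in the proof of Lemma~\ref{lem:latticetheta}, applied to $\Sigma^{1/2}\Z^d$ with minimum at least $\sqrt{c_0}$, gives $\sup_{P\in\mathcal F}\sum_{n^\top\Sigma n>C}e^{-n^\top\Sigma n}\to0$ as $C\to\infty$.
\item Every frequency with $n^\top\Sigma n\le C$ for some $P\in\mathcal F$ lies in the finite set $\{h_{\mathcal F}\le C\}$.
\end{enumerate}
Split $\|F_{P_i}-F_{P_j}\|_2^2$ over this finite set and its complement. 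Then your $L^2$ Cauchy argument goes through, and Theorem~\ref{thm:completionclassification} converts the resulting $L^1$ limit into a $d_{\mathrm W}$ limit.

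\paragraph{The paper's route.} The paper argues structurally instead. It extracts limits $Q_j(n)\to Q(n)\in[0,\infty]$ and $e^{in\cdot\mu_j}\to\chi(n)$, and shows the surviving set $L$ is a primitive subgroup. It proves $Q$ is positive definite on $\operatorname{span}_{\R}L$ by Minkowski's theorem, using $Q\ge h_{\mathcal F}$ on $L$. It derives $\min_{n\notin L}Q_j(n)\to\infty$ from finiteness of $\{h_{\mathcal F}\le C\}$. Finally it applies the convergence criterion \eqref{eq:outsidecriterion}. The $L^2$ repair is shorter and avoids Minkowski's theorem. The paper's route identifies the limit point $(L,\chi,Q)$ directly from the parameters.
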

\begin{proof}
Suppose first that the closure is compact but \eqref{eq:uniformdecay} fails. Then there are $c>0$, frequencies $n_j$ with $|n_j|\to\infty$, and $P_j\in\mathcal F$ with $|m_{n_j}(F_{P_j})|\ge c$. Passing to a subsequence, $P_j\to P$ in the completion, so $\|F_{P_j}-F_P\|_1\to0$ by Lemma~\ref{lem:observables}. Hence
$|m_{n_j}(F_{P_j})-m_{n_j}(F_P)|\le\|F_{P_j}-F_P\|_1\to0$, while $m_{n_j}(F_P)\to0$ by the Riemann--Lebesgue lemma. This contradicts $|m_{n_j}(F_{P_j})|\ge c$.

Conversely, assume \eqref{eq:uniformdecay}. Every point of the closure is a limit of points of $\mathcal F$, so by a diagonal argument it suffices to show that every sequence $P_j=(\mu_j,\Sigma_j)$ in $\mathcal F$ has a subsequence converging in the completion. Put $Q_j(n)=n^\top\Sigma_jn$. By a diagonal argument over the countable set $\Z^d$, we may assume that $Q_j(n)$ converges in $[0,\infty]$ and $e^{in\cdot\mu_j}$ converges in $S^1$ for every $n$. Let $L$ be the set of $n$ with finite limit $Q(n)$, and $\chi(n)=\lim_je^{in\cdot\mu_j}$ for $n\in L$. As in the proof of Lemma~\ref{lem:densityclosure}, $L$ is a primitive subgroup, $\chi$ is a character, and the limits on a basis of $L$ and its pairwise sums define a positive-semidefinite quadratic form $Q$ on $V=\operatorname{span}_{\R}L$ with $Q(n)=\lim_jQ_j(n)$ for $n\in L$. Moreover $Q(n)\ge h_{\mathcal F}(n)$ on $L$.

The form $Q$ is positive definite on $V$. Otherwise its kernel $K\subset V$ is nonzero. For $\delta>0$ and $R>0$, the convex symmetric set of $x\in V$ whose component in $K$ has norm at most $R$ and whose component in $K^\perp\cap V$ has norm at most $\delta$ has volume tending to infinity with $R$. By Minkowski's theorem it contains a nonzero $n_\delta\in L$, and $Q(n_\delta)\le\|Q\|\delta^2$. If the $n_\delta$ are unbounded as $\delta\to0$, this contradicts $Q(n)\ge h_{\mathcal F}(n)\to\infty$. If they are bounded, some nonzero $n\in L$ has $Q(n)=0$; then $h_{\mathcal F}(kn)\le Q(kn)=k^2Q(n)=0$ for every integer $k$, again contradicting \eqref{eq:uniformdecay}.

Next, $\min_{n\notin L}Q_j(n)\to\infty$. Given $C>0$, the set $\{n:h_{\mathcal F}(n)\le C\}$ is finite by \eqref{eq:uniformdecay}. For the finitely many $n\notin L$ in this set, $Q_j(n)\to\infty$ by the definition of $L$; every other $n\notin L$ has $Q_j(n)\ge h_{\mathcal F}(n)>C$. Hence $\min_{n\notin L}Q_j(n)>C$ for all large $j$.

Thus the characters and quadratic forms restricted to $L$ converge to $\chi$ and $Q$, $Q$ is positive definite, and \eqref{eq:outsidecriterion} holds. By Theorem~\ref{thm:completionclassification}, $P_j$ converges to the point $(L,\chi,Q)$ of the completion; if $L=\Z^d$ this point lies in $\mathcal M_d$.
\end{proof}

For $\mathcal M_d(\varepsilon)$ one has $h(n)\ge\varepsilon|n|^2$, so Proposition~\ref{prop:compactcriterion} recovers the compactness in Theorem~\ref{thm:spectralfloor}. The criterion is strictly weaker. For the sequence $\Sigma_j=jB_j$ of Example~\ref{ex:d2}, every nonzero $n$ satisfies $n^\top\Sigma_jn\ge j$, and for fixed $j$ the form $n^\top\Sigma_jn$ grows quadratically in $n$; hence $\inf_jn^\top\Sigma_jn\to\infty$ as $|n|\to\infty$, although $\lambda_{\min}(\Sigma_j)\to0$. In contrast, the shear sequence at $E=U_d$ in the proof of Theorem~\ref{thm:noncompact} has $|m_{(1,k)}(P_k)|=q$ for every $k$, so it violates \eqref{eq:uniformdecay}, as it must.

Compactness also suffices for uniform estimation. We use the minimum-distance estimate of Yatracos \cite{Yatracos1985}; see also \cite[Chapter~6]{DevroyeLugosi2001}. For completeness we include the short argument.

\begin{lemma}[Minimum-distance selection]\label{lem:yatracos}
Let $f_1,\dots,f_N$ be probability densities on $\T^d$, let $\mathcal A$ be the family of sets $\{f_i>f_k\}$, $i\ne k$, and let $\mathbb P_n$ be the empirical measure of $n$ independent observations from a density $f$. Choose $\widehat\imath$ minimizing $\Delta_i=\max_{A\in\mathcal A}|\int_Af_i\dd\nu-\mathbb P_n(A)|$, with ties broken by the smallest index. Then
\[
 \E\|f_{\widehat\imath}-f\|_1\le3\min_i\|f_i-f\|_1+4\sqrt{\frac{\log(2N^2)}{2n}}.
\]
\end{lemma}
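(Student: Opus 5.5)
The plan is the standard Scheffé-set argument: reduce the $L^1$ error of the selected density to a uniform deviation of the empirical measure over the finite class $\mathcal A$, then bound that deviation by Hoeffding's inequality and a maximal inequality.

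\emph{Step 1: deterministic reduction.} Fix an index $i^*$ attaining $\min_i\|f_i-f\|_1$ and put $D=\max_{A\in\mathcal A}|\mathbb P_n(A)-\int_Af\dd\nu|$. I will prove the pathwise bound
\[
 \|f_{\widehat\imath}-f\|_1\le3\|f_{i^*}-f\|_1+4D.
\]
First,
\[
 \|f_{\widehat\imath}-f\|_1\le\|f_{\widehat\imath}-f_{i^*}\|_1+\|f_{i^*}-f\|_1 .
\]
If $\widehat\imath=i^*$ the claim is immediate, so assume they differ and let $A=\{f_{\widehat\imath}>f_{i^*}\}\in\mathcal A$. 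By Scheffé's identity,
\[
 \|f_{\widehat\imath}-f_{i^*}\|_1=2\Bigl(\int_Af_{\widehat\imath}\dd\nu-\int_Af_{i^*}\dd\nu\Bigr).
\]
Insert $\mathbb P_n(A)$ between the two integrals to get
\[
 \|f_{\widehat\imath}-f_{i^*}\|_1\le2\Delta_{\widehat\imath}+2\Delta_{i^*}\le4\Delta_{i^*},
\]
where the last step uses the minimality of $\widehat\imath$. For every $B\in\mathcal A$,
\[
 \Bigl|\int_Bf_{i^*}\dd\nu-\mathbb P_n(B)\Bigr|\le\Bigl|\int_B(f_{i^*}-f)\dd\nu\Bigr|+D\le\tfrac12\|f_{i^*}-f\|_1+D ,
\]
since for densities $|\int_B(g-h)|\le\frac12\|g-h\|_1$. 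Hence $\Delta_{i^*}\le\frac12\|f_{i^*}-f\|_1+D$. Combining gives $\|f_{\widehat\imath}-f\|_1\le\|f_{i^*}-f\|_1+2\|f_{i^*}-f\|_1+4D$, which is the claimed bound. Measurability of $\widehat\imath$ is clear, since it is a finite-choice rule based on finitely many empirical frequencies.

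\emph{Step 2: expectation of $D$.} The class $\mathcal A$ has at most $N(N-1)\le N^2$ sets. For each $A$, the variable $\mathbb P_n(A)-\int_Af$ is an average of $n$ independent centered variables taking values in $[0,1]$ minus their mean, so Hoeffding's lemma makes it subgaussian with variance proxy $1/(4n)$. Including both signs gives at most $2N^2$ subgaussian variables. The standard maximal inequality $\E\max_{j\le K}X_j\le\sqrt{2\sigma^2\log K}$ then yields
\[
 \E D\le\sqrt{2\cdot\tfrac1{4n}\log(2N^2)}=\sqrt{\frac{\log(2N^2)}{2n}} .
\]
Taking expectations in Step 1 produces the stated constant $4\sqrt{\log(2N^2)/(2n)}$.

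The only delicate points are bookkeeping. The inequality $\Delta_{\widehat\imath}\le\Delta_{i^*}$ is applied on the set $A$ built from the pair $(\widehat\imath,i^*)$, which belongs to $\mathcal A$ precisely because the two indices differ. The factor $\frac12$ in the Scheffé bound is what yields the constant $3$ rather than a larger one. Beyond that I expect no real obstacle.
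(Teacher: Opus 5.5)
Your proposal is correct and follows the paper's proof essentially step by step. Both use the pathwise bound $\|f_{\widehat\imath}-f\|_1\le3\|f_i-f\|_1+4\Delta$, obtained from the Scheff\'e set $\{f_{\widehat\imath}>f_i\}$ and $\Delta_i\le\frac12\|f_i-f\|_1+\Delta$, and then Hoeffding's lemma with variance proxy $1/(4n)$ plus the maximal inequality over at most $2N^2$ signed variables; the only cosmetic difference is that the paper states the pathwise bound for every index $i$ rather than for a fixed minimizer $i^*$.
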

\begin{proof}
Put $\Delta=\max_{A\in\mathcal A}|\mathbb P_n(A)-\int_Af\dd\nu|$. For every $i$, $\Delta_i\le\frac12\|f_i-f\|_1+\Delta$. If $\widehat\imath=i$ there is nothing to prove in the next step; otherwise, with $A=\{f_{\widehat\imath}>f_i\}\in\mathcal A$, one has $\|f_{\widehat\imath}-f_i\|_1=2\int_A(f_{\widehat\imath}-f_i)\dd\nu\le2(\Delta_{\widehat\imath}+\Delta_i)\le4\Delta_i$. Hence $\|f_{\widehat\imath}-f\|_1\le3\|f_i-f\|_1+4\Delta$ for every $i$. Each $\mathbb P_n(A)-\int_Af\dd\nu$ is an average of $n$ independent centered variables with values in an interval of length one, hence sub-Gaussian with variance proxy $1/(4n)$ by Hoeffding's lemma. The maximal inequality over the at most $2N^2$ signed variables gives $\E\Delta\le\sqrt{\log(2N^2)/(2n)}$.
\end{proof}

\begin{corollary}[Uniform consistency on compact subfamilies]\label{cor:compactconsistency}
If $\mathcal F\subset\mathcal M_d$ satisfies \eqref{eq:uniformdecay}, there are estimators $\widehat P_n$ with values in $\mathcal F$ such that
\[
 \sup_{P\in\mathcal F}\E_{P^{\otimes n}}\|F_{\widehat P_n}-F_P\|_1\longrightarrow0
 \quad\text{and}\quad
 \sup_{P\in\mathcal F}\E_{P^{\otimes n}}d_{\mathrm W}(\widehat P_n,P)\longrightarrow0 .
\]
Quantitatively, let $N_{\mathcal F}(\delta)$ be the smallest number of points of $\mathcal F$ whose densities form a $\delta$-net of $\{F_P:P\in\mathcal F\}$ in $L^1$. Then the $L^1$ risk of these estimators satisfies
\begin{equation}\label{eq:entropyrate}
 \sup_{P\in\mathcal F}\E_{P^{\otimes n}}\|F_{\widehat P_n}-F_P\|_1
 \le\inf_{\delta>0}\left\{3\delta+4\sqrt{\frac{\log2+2\log N_{\mathcal F}(\delta)}{2n}}\right\}.
\end{equation}
If moreover $T_{\mathcal F}(M)=\sum_{\|k\|_\infty>M}e^{-h_{\mathcal F}(k)}$ is finite, the density estimator $\widetilde F_{n,M}$ of Theorem~\ref{thm:spectralfloor} satisfies
\begin{equation}\label{eq:enveloperate}
 \sup_{P\in\mathcal F}\E_{P^{\otimes n}}\|\widetilde F_{n,M}-F_P\|_1
 \le2\left\{\frac{(2M+1)^d-1}{n}+T_{\mathcal F}(M)\right\}^{1/2}.
\end{equation}
\end{corollary}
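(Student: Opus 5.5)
The plan has three parts: the $L^1$ entropy bound via Lemma~\ref{lem:yatracos} on a finite net, the envelope bound via the Fourier-series estimator, and the Fisher-distance consistency via compactness of the closure.

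\textbf{Entropy bound.} Fix $\delta>0$ and assume $N=N_{\mathcal F}(\delta)$ is finite. Finiteness follows from Proposition~\ref{prop:compactcriterion}: the closure is compact in $d_{\mathrm W}$, and the density map is continuous by Lemma~\ref{lem:observables}, so the set of densities is totally bounded in $L^1$. Let $P_1,\dots,P_N\in\mathcal F$ be the net centers, and let $\widehat P_n^{(\delta)}$ be the Yatracos selection of Lemma~\ref{lem:yatracos} among $f_i=F_{P_i}$. This rule is measurable, since it is a finite choice based on empirical frequencies of finitely many Borel sets. For each $P\in\mathcal F$ some $i$ has $\|f_i-F_P\|_1\le\delta$. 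The lemma then gives a risk of at most $3\delta+4\sqrt{\log(2N^2)/(2n)}$, which is the term inside \eqref{eq:entropyrate} because $\log(2N^2)=\log 2+2\log N$. To realize the infimum over $\delta$, I would take $\delta_n$ from a countable grid, within a factor of the infimum (or with additive slack $1/n$), and set $\widehat P_n=\widehat P_n^{(\delta_n)}$. Because every $N_{\mathcal F}(\delta)$ is finite, the infimum tends to $0$: first choose $\delta$ small, then let $n\to\infty$.

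\textbf{Fisher consistency.} I would follow the last step of Theorem~\ref{thm:spectralfloor}. The closure $\overline{\mathcal F}$ is compact. By Theorem~\ref{thm:completionclassification} the density map is a homeomorphism of $\overline{\mathcal F}$ onto a compact $L^1$ set, so its inverse is uniformly continuous, and $D=\operatorname{diam}_{d_{\mathrm W}}\overline{\mathcal F}<\infty$. Given $\eta>0$, choose $\delta$ so that $L^1$ distance below $\delta$ forces Fisher distance below $\eta$. Splitting on that event and applying Markov's inequality gives
\[
 \sup_P\E\, d_{\mathrm W}(\widehat P_n,P)\le\eta+D\delta^{-1}\sup_P\E\|F_{\widehat P_n}-F_P\|_1 .
\]
Letting $n\to\infty$ and then $\eta\downarrow0$ finishes the argument.

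\textbf{Envelope bound.} I would repeat the Parseval argument from Theorem~\ref{thm:spectralfloor}. Clipping and normalizing give
\[
 \E\|\widetilde F_{n,M}-F_P\|_1\le2\,\E\|G_{n,M}-F_P\|_2\le2(\E\|G_{n,M}-F_P\|_2^2)^{1/2}.
\]
By Parseval, the squared $L^2$ error equals the sum of the variance terms, each at most $1/n$, over $k\in\Lambda_M\setminus\{0\}$ (the $k=0$ coefficient is exact), plus the bias $\sum_{\|k\|_\infty>M}|m_k(P)|^2$. Since $|m_k(P)|^2=e^{-k^\top\Sigma k}\le e^{-h_{\mathcal F}(k)}$, the bias is at most $T_{\mathcal F}(M)$, and \eqref{eq:enveloperate} follows.

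The only delicate point I expect is the bookkeeping in the entropy bound: checking that the net centers can be taken in $\mathcal F$ with $N_{\mathcal F}(\delta)<\infty$, and choosing $\delta_n$ so that one estimator sequence attains the infimum up to vanishing slack. The rest is routine.
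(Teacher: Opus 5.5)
Your proposal is correct and follows the paper's proof essentially step for step. It uses the same three ingredients: Yatracos selection (Lemma~\ref{lem:yatracos}) on a finite $L^1$ net with centers in $\mathcal F$, which exists by compactness of the closure; Markov's inequality plus uniform continuity of the inverse density map on the compact closure, for the Fisher loss; and the Parseval bias--variance bound of Theorem~\ref{thm:spectralfloor} with $|m_k(F_P)|^2\le e^{-h_{\mathcal F}(k)}$, for \eqref{eq:enveloperate}. The one difference is that the paper takes $\delta$ minimizing the right side of \eqref{eq:entropyrate} at each $n$, while you allow a vanishing slack in $\delta_n$; this is, if anything, more careful, because the infimum need not be attained.
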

\begin{proof}
Let $\mathcal K$ be the compact closure of $\mathcal F$ given by Proposition~\ref{prop:compactcriterion}. Its density image is compact in $L^1$, and $\mathcal F$ is dense in $\mathcal K$. Hence for each $\delta>0$ there are finitely many $P_1,\dots,P_{N(\delta)}\in\mathcal F$ whose densities form a $\delta$-net of the densities of $\mathcal F$ in $L^1$. Choose $\delta_n\downarrow0$ so slowly that $\log N(\delta_n)/n\to0$, and let $\widehat P_n$ be the point selected by Lemma~\ref{lem:yatracos} from this net. Then $\sup_{P\in\mathcal F}\E\|F_{\widehat P_n}-F_P\|_1\le3\delta_n+4\{\log(2N(\delta_n)^2)/(2n)\}^{1/2}\to0$. Choosing instead $\delta$ to minimize the right side at each $n$ gives \eqref{eq:entropyrate}. The estimator is measurable, since it is a finite selection by comparison of empirical frequencies.

For the Fisher loss, the density map is a homeomorphism from $\mathcal K$ onto a compact set of densities, so its inverse is uniformly continuous. Given $\eta>0$, choose $\gamma>0$ such that $\|F_{P'}-F_P\|_1<\gamma$ implies $d_{\mathrm W}(P',P)<\eta$ on $\mathcal K$. With $D$ the finite diameter of $\mathcal K$, Markov's inequality gives $\E d_{\mathrm W}(\widehat P_n,P)\le\eta+D\,\gamma^{-1}\E\|F_{\widehat P_n}-F_P\|_1$ uniformly in $P\in\mathcal F$; its limit superior is at most $\eta$, and $\eta$ is arbitrary.

Finally, the proof of \eqref{eq:fourierestimatorbound} used only $\E|\widehat m_k-m_k|^2\le1/n$, Parseval's identity, and a bound for $\sum_{\|k\|_\infty>M}|m_k(F_P)|^2$. Since $|m_k(F_P)|^2\le e^{-h_{\mathcal F}(k)}$ for $P\in\mathcal F$, the same argument gives \eqref{eq:enveloperate}.
\end{proof}

The envelope bound applies beyond spectral floors. For the family $\mathcal F=\{(\mu,jB_j):\mu\in\T^2,\ j\ge1\}$ of Example~\ref{ex:d2}, whose smallest eigenvalues tend to zero, one has $h_{\mathcal F}(n)\ge\|n\|_\infty$. Indeed, with $x=jn_1-n_2$, $n^\top(jB_j)n=jn_1^2+jx^2\ge\max\{|n_1|,\,j|n_1|+|x|\}\ge\|n\|_\infty$ when $n_1\ne0$, and it equals $jn_2^2\ge|n_2|$ when $n_1=0$. Hence $T_{\mathcal F}(M)\le\sum_{m>M}8m\,e^{-m}\le C(M+1)e^{-M}$, and the choice $M=\lceil\log(n+1)\rceil$ in \eqref{eq:enveloperate} gives
\[
 \sup_{P\in\mathcal F}\E\|\widetilde F_{n,M}-F_P\|_1\le C\,\frac{\log(n+1)}{\sqrt n},
\]
a rate of the same order as under a spectral floor, although this family has none.

Proposition~\ref{prop:compactcriterion} and Corollary~\ref{cor:nouniformestimation} together describe the dichotomy. On every subfamily satisfying \eqref{eq:uniformdecay}, uniform estimation is possible. Every fixed neighborhood of an added point violates \eqref{eq:uniformdecay}, and there it is impossible. Whether every subfamily violating \eqref{eq:uniformdecay} forbids uniform estimation is not settled here; the proof of Corollary~\ref{cor:nouniformestimation} uses the additional likelihood-ratio control supplied by integer shears.

\subsection{A numerical illustration}\label{subsec:simulation}
Figure~\ref{fig:simulation} illustrates the dichotomy for $d=2$ with the truncated Fourier estimator $\widetilde F_{n,M}$ of Theorem~\ref{thm:spectralfloor}, using the same cutoff $M_n=\lceil\{\log(n+1)/0.5\}^{1/2}\rceil$ in both cases. For each sample size we estimate, by twelve Monte Carlo repetitions, the mean $L^1$ error at every member of a finite subfamily and report the largest of these means. The first subfamily consists of the shear laws $\Sigma_k=2\left(\begin{smallmatrix}1+k^2&-k\\-k&1\end{smallmatrix}\right)$, $k\in\{0,1,2,3,5,8,13,21,34\}$, all at the same Fisher distance from $U_2$. The second consists of twelve distinct covariances of the forms $0.5I_2+tvv^\top$ and $0.5I_2+B_k$ satisfying $\Sigma\succeq0.5I_2$, including large condition numbers. Over $100\le n\le30000$, the worst-case error on the shear family levels off near $0.47$, because every member with $k>M_n$ retains a moment of size $e^{-1}$ at the frequency $(1,k)$, which the estimator does not see. On the spectral-floor family it decreases with fitted log-log slope $-0.44$, consistent with $n^{-1/2}$ up to the logarithmic factor in \eqref{eq:spectralfloorrate}. The figure is an illustration with one estimator and finite subfamilies; the lower bound of Corollary~\ref{cor:nouniformestimation} applies to every estimator and is proved analytically.

\begin{figure}[ht]
\centering
\includegraphics[width=.82\textwidth]{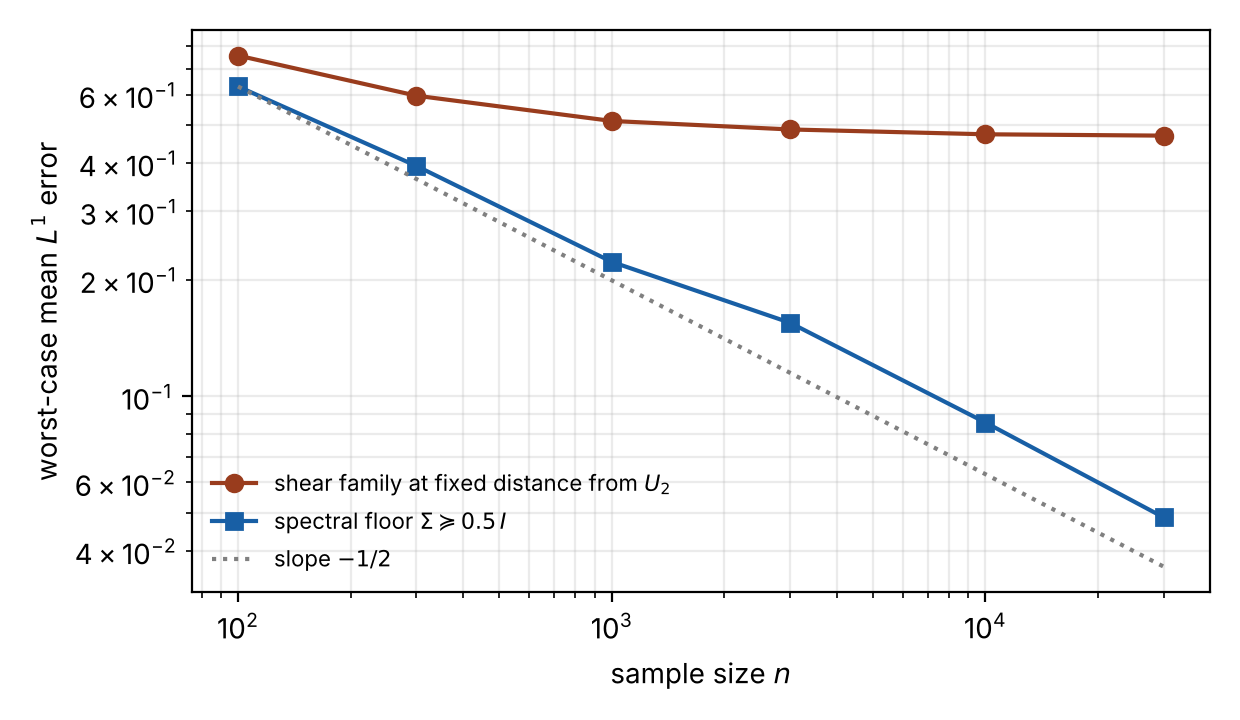}
\caption{Worst-case Monte Carlo $L^1$ error of the truncated Fourier estimator, over the shear family at a common distance from $U_2$ and over a spectral-floor family $\Sigma\succeq0.5I_2$, for $d=2$. Twelve repetitions per law and sample size; the dotted line has slope $-1/2$. Computed by the ancillary script \texttt{simulate\_estimation.py}.}\label{fig:simulation}
\end{figure}

\section{Discussion}
The main geometric and statistical conclusions are consequences of the same surviving-frequency structure. Unrestricted shears retain separated Fourier signals inside arbitrarily small Fisher neighborhoods, producing a nonlocally compact completion and a sample-size-independent local minimax obstruction. A fixed spectral floor gives a common summable Fourier envelope, compact closure in the same completed metric, and uniform estimation. The condition number may still diverge in that regularized class; loss of the spectral floor and movement of fixed-amplitude signals to high frequencies are the features used by the lower bound.

The metric results provide more than an obstruction theorem. Independent-block slices are isometric, and for product inputs the canonical projection onto a uniform-factor piece realizes the distance to that piece. Every added law lies on a minimizing geodesic; at residual rank $d-1$ this gives finite endpoints for which the original manifold has no distance-attaining curve. Compatible transverse pieces have a right-angle distance asymptotic and a shorter bridge than the path through their intersection. These statements coexist with contractible small balls and finite Hausdorff dimension.

The remaining questions have a narrower scope. General existence of minimizers between arbitrary points in the nonproper completion, exact distances between incompatible frequency pieces, and sectional-curvature limits along interior approaches to partially uniform laws remain open here. The logarithmic gap between the regularized $L^1$ risk bounds is also unresolved. These questions are distinct from the intrinsic classification, the spectral-floor recovery theorem, and the explicit minimizing curves proved above.

The setting can be enlarged in several directions that we do not pursue. First, when only some coordinates are angular, observations lie in $\R^k\times\T^m$, as for wind direction together with speed; the angular frequencies should again be classified by primitive subgroups, now coupled to the linear coordinates through conditional covariances. Second, other wrapped families whose characteristic functions have the form $e^{in\cdot\mu}\varphi(n^\top\Sigma n)$ with $\varphi$ positive and strictly decreasing, such as wrapped multivariate stable laws with $\varphi(t)=e^{-t^{\alpha/2}}$, have the same integer-shear symmetry and are expected to admit the same lattice description of their total-variation closures, but their Fisher distances require separate estimates. For general elliptical generators this fails, since $\varphi$ may vanish. Third, identifying laws related by integer torus automorphisms identifies the separated shear sequences; the completion of the resulting quotient, and its relation to the geometry of spaces of lattices, is a different and natural problem.

\section*{Declarations}\phantomsection\label{sec:declarations}

\subsection*{Funding}
The authors received no specific funding for this work.

\subsection*{Competing interests}
The authors declare no competing interests.

\subsection*{Data and code availability}
No data sets were generated or analyzed. The ancillary files contain scripts with run records that corroborate selected formulas numerically: the heat contraction in one and two observation dimensions, the sharp distance coefficient on fixed covariance rays, the theta identity in the proof of Corollary~\ref{cor:nouniformestimation}, the bounds of Theorem~\ref{thm:spectralfloor}, and the distance bounds between pieces; a further script produces the Monte Carlo illustration in Figure~\ref{fig:simulation}. These checks are not interval certificates, and no proof in the paper depends on them.

\subsection*{Use of generative AI}
During the preparation of this work the authors used generative AI tools for proof exploration, drafting and revision; for independent consistency checks of the proofs; for code for the numerical checks, the figures, and the verification scripts; and for language editing. After using these tools, the authors reviewed and edited the content as needed. They checked all mathematical content and take full responsibility for the publication.

\subsection*{Acknowledgements}
This research was initiated during the Honors Summer Math Camp (HSMC), organized by Mathworks at Texas State University. We thank Juna Lee, Nathan Negera, Kalea Plotnick, and Antony Wang for their participation in a preliminary study of the circular model.

\printbibliography
\end{document}